\newtheorem{thm}{Theorem}[section]
\newtheorem{lem}[thm]{Lemma}
\newtheorem{cor}[thm]{Corollary}
\newtheorem{prop}[thm]{Proposition}
\newtheorem{conj}[thm]{Conjecture}
\theoremstyle{definition}
\newtheorem{rmk}[thm]{Remark}
\newtheorem{ex}[thm]{Example}
\newcommand{\sumt}{{\langle \theta-1\rangle}}
\newcommand{\kerf}{\ker\varphi_\T}
\newcommand{\imf}{\mathrm{im}\,\varphi_\T}
\newcommand{\imv}{\mathrm{im}\,\nu_{k,d}}
\newcommand{\sip}{SIP} 
\newcommand{\R}{\mathbb{R}}
\newcommand{\Z}{\mathbb{Z}}
\newcommand{\cT}{\mathcal{T}}
\newcommand{\T}{\mathcal{T}}
\newcommand{\cC}{\mathcal{C}}
\newcommand{\cS}{\mathcal{S}}
\newcommand{\It}{I_{\T}}
\def\mult{\operatorname{mult}}
\newcommand{\im}{\mathrm{im}}
\def\str{\operatorname{str}}
\def\spn{\operatorname{span}}
\title{Staged tree models with toric structure}
\author{Christiane G\"orgen, Aida Maraj, and Lisa Nicklasson\\
\small{Max Planck Institute for Mathematics in the Sciences}}
\date{}
\begin{document}

\maketitle

\begin{abstract}
\noindent A staged tree model is a discrete statistical model encoding relationships between events. These models are realised by directed trees with coloured vertices. In algebro-geometric terms, the model consists of points inside a toric variety. For certain trees, called balanced, the model is in fact the intersection of the toric variety and the probability simplex. This gives the model a straightforward description, and has computational advantages. In this paper we show that the class of staged tree models with a toric structure extends far outside of the balanced case, if we allow a change of coordinates. It is an open problem whether all staged tree models have toric structure. 
\end{abstract}

\section{Introduction}


\emph{Staged tree models} are discrete statistical models introduced by Smith and Anderson in 2008~\cite{Smith.Anderson.2008} that record conditional independence relationships among certain events. They are realisable as rooted trees with coloured vertices and labelled edges directed away from the root, called \emph{staged trees}. Vertices in a staged tree represent events, edge labels represent conditional probabilities, and the colours on the vertices represent an equivalence relation---vertices of the same colour have the same outgoing edge labels. 
We use $\theta(e)$ to denote the label associated to an edge $e$. The sum of the labels of all edges emanating from the same vertex in a staged tree must be equal to one. 
The staged tree model itself is then defined as the set of points in $\R^n$ parametrised by multiplying edge labels along the root-to-leaf paths $\lambda_1, \ldots, \lambda_n$ in the staged tree $\T$: 

\begin{equation}\label{eq:parametrisation}
\mathcal{M}_{\T}=\left\lbrace\biggl(\prod_{e\in E(\lambda_r)}\theta(e) \biggr)_{r=1, \ldots, n} \ \left| \  0<\theta(e)<1, \, \sum_{e\in E(v)}\theta(e)=1 \text{ for all vertices } v \text{ in } \T \right.\right\rbrace
\end{equation}
where $E(v)$ is the set of edges which emanate from a vertex $v$, and $E(\lambda_r)$ is the set of edges of the $r^\text{th}$ root-to-leaf path, $r=1,\ldots,n$. The local sum-to-one conditions ensure that the multiplication rule of edge labels along root-to-leaf paths in $\T$ induces a well defined probability distribution.
The visual presentation via a coloured tree has been the key to the staged tree models' increasing popularity in both algebra \cite{DG,AD,duarte2021discrete,Duarte.Solus.2021} and statistics \cite{Keeble.etal.2017,Goergen.etal.2020,rpackage,Leonelli.Varando.2021,Genewein.etal.2021}. 

By the description \cref{eq:parametrisation}, a staged tree model can be interpreted as the variety of the kernel of a map $\varphi_\T$, formally introduced in \cref{eq:varphi}, intersected with the probability simplex. We say that a staged tree model has \emph{toric structure} if the variety of $\kerf$ is a toric variety. This is equivalent to  $\kerf$ being a binomial ideal, possibly after an appropriate linear change of variables. Duarte and G\"orgen~\cite{DG} show that when the tree is balanced, one can safely ignore the sum-to-one conditions, and the ideal $\kerf$ is binomial. This class has wide intersections with Bayesian networks/directed graphical models and hierarchical models. 
The majority of staged trees, however, are not balanced. This is true even for the small portion of staged trees which are also Bayesian networks, making the algebraic study of these models a new and necessary task.

Non-balanced staged trees appear naturally in applications. The most basic example is one of flipping a biased coin with probability $\theta_1$ for heads, and $\theta_2=1-\theta_1$ for tails. If it shows heads, flip it again. The graph in \cref{fig:biasedcoin} is a staged tree presentation of this experiment.  
\begin{figure}
\centering
\includegraphics{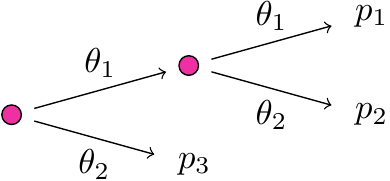}
\caption{The coin-flip model  $\mathcal{M}_\T=\{(\theta_1^2,\theta_1\theta_2, \theta_2)\mid 0<\theta_1,\theta_2<1,\, \theta_1+\theta_2=1\}\subset (0,1)^3$ represented by a binary, non-squarefree, not balanced staged tree. Both internal vertices are in the same stage.}
\label{fig:biasedcoin}
\end{figure}
The points in the corresponding model have the parametrisation $(\theta_1^2,\theta_1\theta_2, \theta_2)$ for positive $\theta_1,\theta_2$ that sum up to one.  They are the solution set inside the probability simplex to the non-binomial equation  $p_1p_3-p_1p_2+p_2^2=0$ in the polynomial ring $\R[p_1,p_2,p_3]$.
 Contrary to the balanced case, as the example suggests, the ideals $\kerf$ of  non-balanced staged trees are not generated by binomials; their generating sets are often complicated. 


The present paper takes a large leap and proves that in many cases non-balanced staged tree models have also a toric structure; one needs to find an appropriate linear change of variables to reveal this friendly structure. For instance, the linear change of coordinates $p_1=q_1, p_2=q_2-q_1$, and $p_3= q_3-q_2$  in the example above bijectively transforms the defining variety into the set of points $(q_1,q_2,q_3)$  that satisfy the binomial equation $q_1q_3-q_2^2=0$.

Why is toric a desirable property? For one, the literature on the algebra of toric varieties is already very rich and with close connections to polyhedral geometry and combinatorics.  Most importantly, toric ideals have shown to be particularly useful in algebraic statistics since with the very first works \cite{diaconis1998algebraic,Pistone.etal.2001,Geiger.etal.2006} in this area.  For instance, generating sets of toric varieties produce Markov bases and contribute in hypothesis testing algorithms~\cite{diaconis1998algebraic}, toric varieties are intrinsically linked to smoothness criteria in exponential families~\cite{Geiger.etal.2001}, and the polytope associated to a toric model is useful when studying the existence of  maximum likelihood estimates~\cite{fienberg2012maximum}. 
 
In previous work, the parametrisation defining a staged tree model  \cref{eq:parametrisation} is often assumed to be squarefree. In other words, in the tree graph no two vertices are in the same stage along the same root-to-leaf path. The theory on staged tree models with a non-squarefree parametrisation is underdeveloped because they pose interpretational ambiguities in a statistical context~\cite{Collazo.etal.2018} and computational challenges in algebraic algorithms~\cite{Goergen.etal.2018}.  However, as the example of the coin flip confirms, non-squarefree staged tree models are very natural, and  both balanced and non-balanced staged trees can be non-squarefree. None of the results in our paper are restricted to squarefree staged tree models, which makes this paper the first to tackle equations defining non-squarefree staged tree models.
 
This article is organised as follows. In \cref{sec:newsec2} we review the background literature on staged trees and introduce combinatorial and algebraic tools for studying these models. We discuss operations called swap and resize on a staged tree, which are used in \cref{sec:balanced} to prove that all balanced staged tree models have a quadratic Gr\"obner basis. It turns out that balanced staged trees carry properties that are central in commutative algebra, namely they are Koszul, normal, and Cohen-Macaulay. In \cref{sec:ideals} we prepare for our investigation of non-balanced trees with toric structure. Informally, the idea is to look for an ideal inside $\kerf$ that is binomial after a linear change of variables, which can be used to detect a binomial structure of $\kerf$. This theory is then set to practice in \cref{sec:extend} where we successfully extend the class of staged trees with toric structure. Our new class contains trees with a certain subtree-inclusion property, balanced trees, and combinations of both. In \cref{sec:one_stage} we explore the structure of staged tree models where all vertices share the same colour. We connect these so-called one-stage trees to Veronese embeddings and show that all binary one-stage trees are toric. The paper ends in \cref{sec:discussion} with a discussion on further research directions, conjectures, and examples of staged trees for which we cannot currently decide whether or not they have toric structure.

\section{Staged tree models}\label{sec:newsec2}
A discrete statistical model can be regarded as a subset of the open probability simplex 
\[{\Delta^\circ_{n-1}}={\{p\in\R^n\mid p_1+\ldots+p_n=1 \text{ and } p_r\in(0,1) \text{ for all } r=1,\ldots,n\}}\]
 of dimension $n-1$. Thus, given a staged tree $\T$, the staged tree model $\mathcal{M}_\T$ in \Cref{eq:parametrisation} defines a discrete statistical model. 
This section starts by studying the  combinatorics of a staged tree, and then introduces the main algebraic object associated to  $\mathcal{M}_\T$.

\subsection{Combinatorics of staged tree models}\label{sec:staged_trees}


Given a staged tree $\T$ let $V$ denote its vertex set and $E$ its edge set. The staged tree $\T$ comes with an equivalence relation  $\sim$ on its vertices. Formally, we say that two inner vertices $u$ and $v$ in $V$ are in the same stage, or share the same colour $c$, if the labels of the outgoing edges of both vertices are the same. Leaves are by convention assumed to be trivially in the same stage since their outgoing edge sets are empty. For two vertices of different stages, the sets of labels of their outgoing edges are assumed to be disjoint. 
In depictions of staged trees we always assume that the outgoing edges of vertices with the same colour are ordered in the same way, such that their labels are pairwise identified from top to bottom. \Cref{fig:biasedcoin,fig:stagedtree} depict first examples. 

For simplicity, leaves of a staged tree will be numbered $1,\dots, n$ and the root-to-leaf paths denoted $\lambda_1,\dots,\lambda_n$. To each leaf an atomic probability $p_r=\prod_{e\in E(\lambda_r)}\theta(e)$ is associated, $r=1,\dots, n$. The notation $p_{[v]}$ is shorthand for the probability of passing through the vertex $v$. Formally, $p_{[v]}$ is the sum of the atomic probabilities $p_r$ for which the root-to-leaf path $\lambda_r$ passes through vertex $v$. 
If $v_i$ is the $i^\text{th}$ child of $v$, the label of the edge $(v,v_i)$ is equal to the fraction $\nicefrac{p_{[v_i]}}{p_{[v]}}$~\cite{Goergen.etal.2020}.
As a consequence, the staged tree model $\mathcal{M}_\T$ can be described via equations $p_{[u]}p_{[v_i]}=p_{[u_i]}p_{[v]} $, for all vertices $u \sim v$. This characterisation motivates the definition of the ideal of model invariants studied in \cref{sec:ideals}.  

From now on, the edge labels will be considered as variables of a polynomial ring $\R[\Theta]$. For a vertex $v\in V$ we  denote by $\T(v)$ the induced subtree of $\T$ which is rooted at $v$ and whose root-to-leaf paths are $v$-to-leaf paths in $\T$. To this subtree we associate a polynomial $t(v)\in\R[\Theta]$ which is equal to  the sum of all products of labels along all $v$-to-leaf paths in $\T(v)$ \cite{Goergen.etal.2018}. This is a key object in our study of balanced staged trees. In particular, we call a staged tree \emph{balanced} \cite{DG} if for any two vertices $u,v$ in the same stage the following is true:
\begin{equation}\tag{$\star$}\label{eq:bal}
t(u_i)t(v_j)=t(u_j)t(v_i) \text{ for all pairs of children of } u \text{ and }v \text{  numbered }  i,j=1,\ldots,k.
\end{equation}
\Cref{fig:stagedtree} shows examples of balanced staged trees. This class is fairly large as all decomposable directed graphical models can be represented by balanced staged trees \cite{DG}. 

Before we can extend the study of balanced staged trees in \cref{sec:balanced}, we need to understand how this property of a tree affects the model it represents.
To this end, note that a staged tree model most often is not uniquely represented by a single staged tree: there may be different parametrisations for the same model, giving rise to different polynomial equations cutting out the same subset of the probability simplex. The class of all staged trees representing the same model is known as the \emph{statistical equivalence class}. Members of such a class are connected to each other by two simple graphical operations: resizes and swaps~\cite{Goergen.Smith.2018}. For the purposes of this paper we present versions of these in a constructive manner.

\begin{figure}
\centering
\begin{subfigure}[b]{0.37\textwidth}
\includegraphics{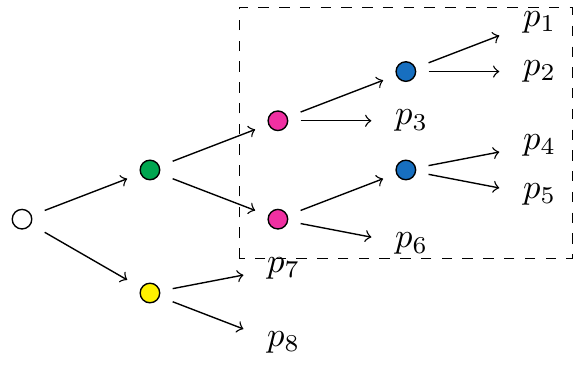}
\caption{A staged tree with a subgraph on which we perform operations.\label{fig:firsttree}}
\end{subfigure}
\quad
\begin{subfigure}[b]{0.28\textwidth}
~\includegraphics{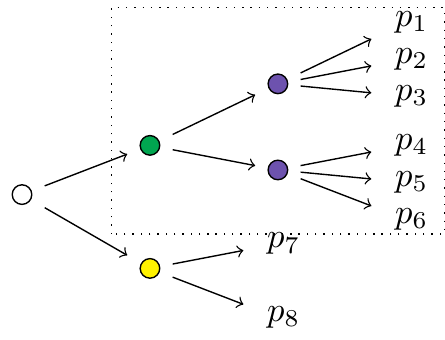}
\caption{The staged tree after performing a resize.\label{fig:resize}}
\end{subfigure}
\quad
\begin{subfigure}[b]{0.28\textwidth}
\includegraphics{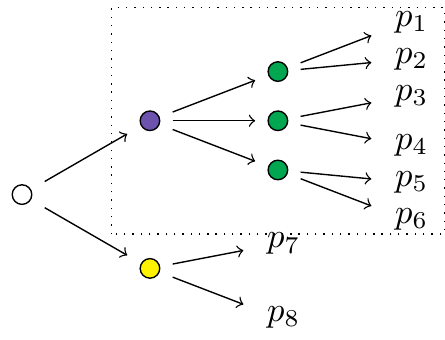}
\caption{The resized staged tree after performing a swap.\label{fig:swap}}
\end{subfigure}
\caption{Three balanced staged trees which are statistically equivalent using a swap and a resize on the highlighted subtrees.\label{fig:stagedtree}}
\end{figure} 

The \emph{swap} operation is illustrated in \cref{fig:resize,fig:swap} where we swap a two-level subtree. Let $\theta_1,\ldots, \theta_k$ be the labels associated to a stage of colour~$c$. Suppose a vertex $v$ is not of the colour $c$, but every term in the subtree polynomial $t(v)$ is divisible by one of the ${\theta_1, \ldots, \theta_k}$. Then we can replace the subtree $\T(v)$ by another tree with the same atomic probabilities but with a root of colour~$c$. 
This is because in this setting every root-to-leaf path in $\T(v)$ goes through a vertex of colour $c$ and we can find a another subtree $\T'(v)\subseteq\T(v)$ with root $v$ and leaves $v_1, \ldots, v_m$ which have the colour~$c$ in $\T$. Let $\T_{v_i\theta_j}$ denote the induced subtree coming after the edge labelled $\theta_j$ of the vertex $u_i$. We obtain a different representation of $\T(v)$ like this. Start with a root of colour $c$ and to each edge glue a copy of $\T'(v)$. Then to each leaf glue the corresponding tree $\T_{v_i\theta_j}$. The map which replaces the original tree by the tree obtained after performing this copying and reglueing is called a swap. 

The \emph{resize} operation is illustrated in \cref{fig:firsttree,fig:resize} where we resize two two-level subtrees. Suppose in a staged tree we have a vertex $u$ such that for every vertex $v$ in the same stage, their children are also pairwise in the same stages, $u_i \sim v_i$ for all $i=1,\ldots,k$. Then for each such $u,v$ we remove their non-leaf children and draw edges from the vertices directly to their respective grandchildren. This introduces a new set of labels for the new edges related to the old ones as follows. Let $w$ be a vertex in the new tree, and $w'$ the corresponding vertex in the original tree. Then if in $t(w)$ we substitute each new label by the product of the two original that it replaced, we get $t(w')$. 
For the resized tree to represent the same model as the original tree we require in addition that no $u_i$ and $u_j$ are in the same stage for $i\not=j$ and these colours do not appear anywhere else in the tree. A resize not respecting these extra conditions were called `na\"ive' in the original presentation  \cite{Goergen.Smith.2018}. But even in the `na\"ive' cases where we do lose information and effectively create a bigger statistical model, these operations are useful tools for the analysis conducted in \cref{sec:extend}. In particular we have the following lemma. 

\begin{lem}\label{lem:resize}
The class of balanced staged trees is invariant under the swap and resize operations.
\end{lem}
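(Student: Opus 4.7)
The plan is to verify the defining balance condition $t(u_i)t(v_j)=t(u_j)t(v_i)$ for every pair of same-stage vertices in the new tree, reducing each instance to a balance equation of the original tree. Throughout I will use the equivalent reformulation of \cref{eq:bal}: balance at $u\sim v$ is the same as $t(u_i)/t(v_i)=t(u_j)/t(v_j)$, so there exists a single ratio $q_{u,v}$ in the fraction field of the label ring with $t(u_i)=q_{u,v}\,t(v_i)$ for every corresponding child $i$, and consequently $t(u)=q_{u,v}\,t(v)$.

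For the resize, let $u\sim v$ have children $u_i\sim v_i$ and grandchildren $u_{i,k}$. Balance at $u\sim v$ yields a common ratio $q$ with $t(u_i)=q\,t(v_i)$; balance at each $u_i\sim v_i$ yields a ratio $q_i$ with $t(u_{i,k})=q_i\,t(v_{i,k})$. Summing $t(u_i)=\sum_k\eta_k^{(i)}t(u_{i,k})=q_i\,t(v_i)$ forces $q_i=q$ for every $i$, hence $t(u_{i,k})=q\,t(v_{i,k})$ throughout. In the resized tree the grandchildren $u_{i,k}$ and $v_{i,k}$ become the new children of $u$ and $v$ with unchanged subtree polynomials, so this is precisely their balance condition. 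All other same-stage pairs of the resized tree lie in regions untouched by the operation and inherit balance directly.

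For the swap, let $\T'(v)\subseteq\T(v)$ have root $v$ and leaves $v_1,\dots,v_m$ of colour $c$, let $v_{i,j}$ denote the $\theta_j$-child of $v_i$ in $\T$, and let $P(w\to v_i)$ be the product of labels along the $w$-to-$v_i$ path in $\T'(v)$. For every internal vertex $w$ of $\T'(v)$ and each of its $k$ copies $w^{(1)},\dots,w^{(k)}$ forming a new stage in the modified tree, a direct unwinding of the construction yields
\[ t_{\mathrm{new}}(w^{(j)})=\sum_i P(w\to v_i)\,t(v_{i,j}). \]
The balance condition at $w^{(j_1)}\sim w^{(j_2)}$ expands as a double sum over $(i,i')$; grouping the symmetric pairs $(i,i')$ and $(i',i)$, each contribution factors as
\[ \bigl[P(w_a\!\to v_i)P(w_b\!\to v_{i'})-P(w_b\!\to v_i)P(w_a\!\to v_{i'})\bigr]\bigl[t(v_{i,j_1})t(v_{i',j_2})-t(v_{i',j_1})t(v_{i,j_2})\bigr], \]
whose second bracket vanishes by balance at $v_i\sim v_{i'}$ in $\T$ since all $v_i$ share colour $c$. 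Balance at the new colour-$c$ root $v$ paired with any other colour-$c$ vertex $y$ outside $\T(v)$ reduces analogously to balance at each pair $v_i\sim y$ in $\T$. All same-stage pairs entirely inside a single $\T_{v_i\theta_j}$, a single copy of $\T'(v)$, or outside $\T(v)$ inherit balance from $\T$.

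The main obstacle is the remaining case where an internal vertex $w$ of $\T'(v)$ lies in the same stage in $\T$ as some vertex $y$ outside $\T(v)$: each copy $w^{(j)}$ then shares $w$'s outgoing labels and forms a same-stage pair with $y$ in the new tree. The balance equation to verify mixes the slab-dependent polynomial $\sum_i P(w_a\to v_i)t(v_{i,j})$ with the unchanged $t(y_a)$, and one has to argue that the original proportionality between $t(w)$ and $t(y)$ distributes over the decomposition $t(w)=\sum_j\theta_j\,t_{\mathrm{new}}(w^{(j)})$. I plan to handle this by combining the original balance at $w\sim y$ with the pairwise balance among the $v_i$ and the structural fact that the labels $\theta_1,\dots,\theta_k$ do not appear in any $P(w_a\to v_i)$, which isolates the required identity on each slab.
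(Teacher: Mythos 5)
Your proposal splits into two parts; the resize half is sound, the swap half is not.

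\textbf{Resize.} Your ratio argument in the fraction field (establishing $q_i=q$ for all $i$, hence $t(u_{i,k})=q\,t(v_{i,k})$ for all grandchildren) is a legitimate and arguably more transparent route than the paper's. The paper instead works with the binomial $t(u_i)t(v_j)-t(u_j)t(v_i)$ in the \emph{new} label variables, notes that it lies in the ideal $\langle\theta_{k\ell}-\tau_k\sigma_\ell\rangle$ because substituting back recovers a balance identity of the original tree, and concludes it must be zero since the new tree's polynomials do not involve the old $\tau,\sigma$. Your method buys concreteness; the paper's buys generality (it phrases the argument so that no case analysis over which level $u,v$ sit at is needed). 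Both proofs quietly focus on same-stage pairs at or below the resized level; your claim that all other pairs ``lie in regions untouched'' deserves a sentence of justification, since vertices \emph{above} the resize do have altered subtree polynomials (labels $\tau_k\sigma_\ell$ replaced by $\theta_{k\ell}$).

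\textbf{Swap.} Here there is a genuine gap. You correctly identify the hard configuration yourself---an internal vertex $w$ of $\T'(v)$ that is same-stage with a vertex $y$ outside $\T(v)$---and then write ``I plan to handle this by\dots{} which isolates the required identity on each slab.'' That is a plan, not a proof; the case is left open. Moreover, the double-sum expansion and symmetric-pair grouping you use for pairs inside the swap region is much heavier than what is needed. The paper's proof rests on a single observation you seem to have missed: the swap only reorders the branching structure inside $\T(v)$ and does not change the multiset of products of labels along root-to-leaf paths, so $t(v)$ (and hence every subtree polynomial entering the balance condition~\cref{eq:bal}) is literally unchanged as a polynomial. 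With that observation, all cases of~\cref{eq:bal}, including your ``main obstacle'' case $w\sim y$, follow immediately, because the polynomials compared on both sides are the same before and after the swap. I would recommend replacing your expansion-based attempt by this invariance argument; as written your swap case is incomplete.
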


\begin{proof}
As for the swap, simply observe that this operation does not change the parametrisation of a staged tree but only locally changes the order of vertices in a subtree $\T(v)$. Hence, $t(v)$ is invariant under the swap, not affecting validity of \cref{eq:bal}.

For the resize, let two vertices $u \sim  v$ be in the same stage in the new tree, after the resize operation has been applied. We want to prove that the condition \cref{eq:bal} is fulfilled, so that $t(u_i)t(v_j)-t(u_j)t(v_i)=0$ for all children of $u$ and $v$, indexed by $i, j=1,\ldots,k$. What we do know is that $ t(u_i)t(v_j)-t(u_j)t(v_i)$ is zero after we substitute a product of labels $\theta_{k \ell}=\tau_k\sigma_\ell$ according to the resize operation, since the original tree was balanced. In other words, this binomial is an element of the ideal generated by $\theta_{k \ell}-\tau_k\sigma_\ell$, namely
\begin{equation}
t(u_i)t(v_j)-t(u_j)t(v_i) \in \langle \theta_{k \ell}-\tau_k\sigma_\ell \rangle_{k, \ell} \subset \R[\Theta]
\end{equation}
where $\Theta$ here denotes the set of labels from both the old tree and the new. But no $\tau_\cdot$'s occur in the new tree, so we must have $t(u_i)t(v_j)-t(u_j)t(v_i)=0$. This proves the claim.
\end{proof}

This result will provide the basis for proving \cref{prop:deg_one_rel}.

\subsection{Algebra of staged tree models}\label{sec:toric_ideals_algebras}


We henceforth treat the atomic probabilities $p_1,\dots,p_n$ of a staged tree $\T$ as variables spanning a polynomial ring denoted $\R[p]$. We will use the short notation $\sumt$ for the ideal of all local  sum-to-one conditions $\langle \sum_{e\in E(v)}\theta(e)-1 \mid v\in V\text{ a non-leaf}\rangle$ in the polynomial ring $\R[\Theta]$. We choose $\R$ as our base field in this paper in the style of real algebraic geometry and algebraic statistics, though all results hold for an arbitrary base field of characteristic zero. Associating the variables of $\R[p]$ to the atomic probabilities induces a ring map 
\begin{equation}\label{eq:varphi}
\varphi_{\T}: \R[p] \to\R[\Theta]/\sumt , \quad p_r \mapsto \prod_{e\in E(\lambda_r)}\theta(e) \text{ for } r=1,\ldots,n.
\end{equation}

We will now use the notation $p_{[v]}$ for the sum of the variables $p_r$ in the ring $\R[p]$ for which the corresponding root-to-leaf path $\lambda_r$ passes through the vertex $v$.
Due to the sum-to-one conditions, the image $\varphi_\T(p_{[v]})$ of an atomic sum $p_{[v]}$ is equal to the product of the labels along the corresponding root-to-$v$ path.

The kernel of $\varphi_\T$ is the \emph{prime ideal of the staged tree} $\T$. 
The variety of this ideal gives an implicit description to the staged tree model $\mathcal{M}_\T$.  
In order to use classical results of algebraic geometry more comfortably, we will mostly work with the homogenised version of $\varphi_\T$,
\begin{equation}\label{eq:homphi}
   {\varphi_{\T}}_{\text{hom}}: \R[p] \rightarrow \R[\Theta,z]\slash \langle\theta-z\rangle, \quad p_{r} \mapsto z^{d-d(\lambda_r)}\prod_{e\in E(\lambda_r)}\theta(e)
\end{equation}
where, in analogy to the notation used in \cref{eq:varphi}, $\langle\theta-z\rangle$ denotes the ideal of homogenised local sum-to-one conditions.
Here $z$ is an artificial parameter, $d(\lambda_r)$ is the number of edges in the root-to-leaf path $\lambda_r$, and  $d$ is the number of edges in the longest  root-to-leaf path in the tree. The integer $d$ will often be regarded as the \emph{depth} of the tree. Each of the variables $p_r$ are mapped to a monomial of degree $d$ under  ${\varphi_{\T}}_{\text{hom}}$.

\begin{rmk}\label{rmk:false_leaves}
The homogenisation process is equivalent to completing the staged tree $\T$ into a staged tree where all leaves are in the same distance $d$ from the root. This is achieved by adding when needed internal vertices of outdegree one and label $z$: compare \cref{fig:bal_col_ex}. Such changes do not affect the model. Hence, we will in notation not distinguish between $\varphi_\T$ and ${\varphi_{\T}}_{\text{hom}}$.
\end{rmk}

An element of significance to this paper is the image of $\varphi_\T$ which is a subring of $\R[\Theta,z]\slash \langle\theta-z\rangle$ denoted $\imf$. We will also refer to $\imf$ as an algebra, as it can be considered as a vector space over $\R$ and a ring at the same time. One can compare the algebras of two staged trees  $\T'\subseteq \T$  sharing the same root. The inclusion $\T'\subseteq \T$ here indicates that  $\T'$ can be obtained  by removing induced subtrees $\T(v)$ for vertices $v$ in $\T$.
 \begin{lem}
\label{lm:inclusion} 
 For any two staged trees $\T'\subseteq \T$ that share the same root one has  $\im \, \varphi_{\T'}\subseteq \imf$. 
\end{lem}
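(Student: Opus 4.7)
The plan is to exhibit every $\R$-algebra generator of $\im\varphi_{\T'}$ as an element of $\imf$, after identifying both images as subrings of a common ambient ring. Since $\T' \subseteq \T$ shares the same root, the label set of $\T'$ is a subset of that of $\T$, every non-leaf of $\T'$ is also a non-leaf of $\T$ with the same outgoing edges, and consequently the sum-to-one ideal of $\T'$ is contained in the sum-to-one ideal of $\T$. This yields a canonical ring inclusion of the codomain of $\varphi_{\T'}$ into the codomain of $\varphi_\T$, through which the two images can be compared.

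The core step is a classification of the leaves of $\T'$ according to their status in $\T$: each leaf $\ell'$ of $\T'$ is either (i) already a leaf of $\T$, or (ii) equal to some internal vertex $v$ of $\T$ whose induced subtree $\T(v)$ has been pruned away to form $\T'$. In case (i) the root-to-leaf path in $\T'$ coincides with the corresponding one in $\T$ and, after accounting for the homogenizing $z$-power via \cref{rmk:false_leaves}, we have $\varphi_{\T'}(p'_{\ell'}) = \varphi_{\T}(p_{\ell}) \in \imf$. In case (ii) the root-to-$v$ path in $\T'$ equals the root-to-$v$ path in $\T$; applying the sum-to-one relations repeatedly inside the removed subtree $\T(v)$, the corresponding edge-label product (augmented by the appropriate power of $z$) coincides with $\varphi_\T(p_{[v]})$, where $p_{[v]}=\sum_{\lambda_r \ni v} p_r \in \R[p]$. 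Hence $\varphi_{\T'}(p'_{\ell'})$ belongs to $\imf$ in this case as well, since $\imf$ is a subring containing each $\varphi_\T(p_r)$.

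Since $\im\varphi_{\T'}$ is generated as an $\R$-algebra by the images of its atomic probability variables, and each such image lies in $\imf$ by the two cases above, the inclusion $\im\varphi_{\T'} \subseteq \imf$ follows. The main bookkeeping point is the interplay between pruning and homogenization: one needs to verify that iterated application of the sum-to-one relations on the removed subtree $\T(v)$ does collapse $\varphi_\T(p_{[v]})$ to exactly the monomial $z^{d-d(v)}\prod_{e}\theta(e)$ produced by the truncated root-to-$v$ path in $\T'$. This is essentially a restatement of \cref{rmk:false_leaves} and is not a genuine obstacle; the rest of the argument is a routine tracing of definitions.
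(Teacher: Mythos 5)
Your proof is correct and follows the same strategy as the paper's: identify each generator of $\im\,\varphi_{\T'}$ (the label product along a root-to-leaf path in $\T'$, suitably homogenised) with $\varphi_\T(p_{[v]})$ for the vertex $v\in\T$ at which that path terminates, and note that $\varphi_\T(p_{[v]})\in\imf$. The paper handles your cases (i) and (ii) uniformly in a single sentence, since a leaf $\ell$ of $\T$ has $p_{[\ell]}=p_\ell$; otherwise the arguments are the same.
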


\begin{proof}
The image of $\varphi_{\T'}$ is generated by the products of labels of root-to-leaf paths in $\T'$. Each of these paths is a root-to-$v$ path in $\T$, for some vertex $v$. Hence the same product lies in the image of $\varphi_\T$, as it is the image of $p_{[v]}$. 
\end{proof}

The central objective of this paper is to provide conditions under which the ideal $\kerf$ is toric, possibly after a linear change of variables. Recall thus that a prime ideal in $\R[p]$ is called toric if it is generated by binomials, or equivalently is the kernel of a monomial map $\varphi$ from $\R[p]$ to a Laurent ring $\R[y_1^{\pm 1}, \ldots, y_m^{\pm 1}]$. That $\varphi$ is a monomial map means that each variable $p_r$ is mapped to a monomial. The kernel of such a map is generated by differences of monomials $p^a-p^b$ such that $\varphi(p^a)=\varphi(p^b)$. The image of $\varphi$ is the subalgebra of $\R[y_1^{\pm 1}, \ldots, y_m^{\pm 1}]$ generated by the monomials $\varphi(p_1), \ldots, \varphi(p_n)$. We say that an algebra is a \emph{monomial algebra} if it is generated by monomials in a Laurent ring or a polynomial ring. 

Even though the map \cref{eq:varphi} appears monomial, its target ring is a quotient ring instead of a Laurent ring, and we cannot conclude that $\kerf$ defines a toric variety; recall here the coin flip example from the introduction. For staged tree models, the ideal $\ker \varphi_{\T}$ is toric in the variables $p_1, \ldots, p_n$ if and only if the tree $\T$ is balanced.
\begin{thm}{\cite[Theorem~10]{DG}}\label{thm:balanced}
The ideal $\kerf$ is a toric ideal if and only if $\T$ is a balanced staged tree. 
\end{thm}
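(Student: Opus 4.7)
The plan is to compare $\kerf$ with the kernel of an unconstrained monomial map. Introduce the auxiliary map $\widetilde\varphi_\T:\R[p]\to\R[\Theta]$ defined by the same formula as $\varphi_\T$ but with target $\R[\Theta]$ instead of $\R[\Theta]/\sumt$. Because $\widetilde\varphi_\T$ sends each $p_r$ to a monomial in the $\theta$'s, its kernel $\widetilde I:=\ker\widetilde\varphi_\T$ is automatically a toric ideal, generated by the binomials $p^a-p^b$ whose corresponding products of root-to-leaf path labels coincide. One has $\widetilde I\subseteq\kerf$, with equality if and only if $\im\,\widetilde\varphi_\T\cap\sumt=0$. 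The theorem therefore reduces to proving the equivalence ``$\T$ is balanced $\iff\kerf=\widetilde I$,'' together with the observation that when the inclusion is strict, the extra generators of $\kerf$ cannot all be recast as binomials in $\R[p]$, obstructing toricity.

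For balanced $\Rightarrow$ toric, the plan is to establish $\im\,\widetilde\varphi_\T\cap\sumt=0$. Using the recursion $t(v)=\sum_j\theta_j\,t(v_j)$, where $\theta_j$ is the label of the edge from $v$ to its $j$-th child, one obtains the identity
\[ t(u)\,t(v_i)-t(u_i)\,t(v)=\sum_j\theta_j\bigl(t(u_j)\,t(v_i)-t(u_i)\,t(v_j)\bigr). \]
Under the balanced hypothesis \cref{eq:bal} every summand on the right vanishes for $u\sim v$, so that $t(u)\,t(v_i)=t(u_i)\,t(v)$ holds outright in $\R[\Theta]$. This rigidity lets one pick a canonical $\R$-basis of $\im\,\widetilde\varphi_\T$ indexed by the combinatorics of $\T$ that remains linearly independent after passing to $\R[\Theta]/\sumt$, so no nonzero element of $\im\,\widetilde\varphi_\T$ can lie in $\sumt$.

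For the converse the plan is the contrapositive: assuming $\T$ is not balanced, produce an element of $\kerf$ that admits no binomial representation modulo $\widetilde I$. Select a \emph{deepest} failure of \cref{eq:bal}, i.e.\ vertices $u\sim v$ and children $u_i,u_j,v_i,v_j$ with $t(u_i)\,t(v_j)\neq t(u_j)\,t(v_i)$ while balance holds at every strictly deeper stage. The quadratic model invariant $p_{[u_i]}\,p_{[v_j]}-p_{[u_j]}\,p_{[v_i]}$ belongs to $\kerf$ and expands as a polynomial in the atomic probabilities whose leading behaviour is controlled by the nonzero discrepancy $t(u_i)\,t(v_j)-t(u_j)\,t(v_i)$. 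The deepest-failure choice guarantees that no binomial from $\widetilde I$ can cancel this obstructing term, so $\kerf$ admits a genuine non-binomial minimal generator and fails to be toric. The main obstacle lies in making the ``no binomial reduction'' step rigorous in the converse and in verifying the canonical-basis independence in the balanced case; both hinge on a careful bookkeeping of how products of path monomials interact with $\sumt$.
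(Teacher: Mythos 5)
This theorem is not proved in the paper you were given; it is cited verbatim from Duarte--G\"orgen \cite{DG}, so there is no in-paper proof to match. That said, the proposal can be assessed on its own terms, and while its opening move (compare $\kerf$ with the kernel $\widetilde I$ of the unconstrained monomial map $\widetilde\varphi_\T\colon\R[p]\to\R[\Theta,z]$) is the right one and does track the spirit of \cite{DG}, the converse direction as written is logically unsound, and the forward direction leaves its central claim unproved.

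For the converse, the inference ``$\kerf$ admits a non-binomial minimal generator, hence it fails to be toric'' is false as a matter of logic: a binomial ideal can have a minimal generating set containing no binomials (already $\langle x,y\rangle=\langle x+y,x-y\rangle$ shows that a minimal generating set tells you nothing about whether \emph{some} generating set consists of binomials). What the argument actually needs is the lemma that every binomial $p^{a}-p^{b}$ lying in $\kerf$ already lies in $\widetilde I$; equivalently, that two distinct monomials of $\R[\Theta,z]$ never become congruent modulo the linear ideal $\langle\theta-z\rangle$. Once you have that, the correct chain is: any failure of \cref{eq:bal} at $u\sim v$, indices $i,j$, makes $\widetilde\varphi_\T(p_{[u_i]}p_{[v_j]}-p_{[u_j]}p_{[v_i]})=m_u m_v\theta_i\theta_j\bigl(t(u_i)t(v_j)-t(u_j)t(v_i)\bigr)\neq 0$, so $\widetilde I\subsetneq\kerf$; if $\kerf$ were generated by binomials, those binomials would all lie in $\widetilde I$ by the lemma, forcing $\kerf\subseteq\widetilde I$, a contradiction. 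Notice that this makes your ``deepest failure'' apparatus unnecessary --- any single failure suffices --- and that the lemma you actually need is nowhere stated in your write-up.

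For the forward direction, showing $\kerf\subseteq\widetilde I$ when $\T$ is balanced is the genuinely hard inclusion, and it is precisely the one you do not prove: the identity $t(u)t(v_i)=t(u_i)t(v)$ only shows that the explicit degree-two generators of the ideal of minors already lie in $\widetilde I$, but $\kerf$ is a priori larger. The appeal to ``a canonical $\R$-basis of $\im\,\widetilde\varphi_\T$ that remains linearly independent in $\R[\Theta,z]/\langle\theta-z\rangle$'' is exactly the statement $\im\,\widetilde\varphi_\T\cap\langle\theta-z\rangle=0$, i.e.\ the theorem itself in this direction, and no construction of such a basis or proof of its independence is given. So both directions rest on a key claim that is asserted rather than established, and the converse additionally commits a non-sequitur in its final step.
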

Hence, if a non-balanced tree has a toric structure, the structure must appear only after a linear change of coordinates. This can be done either by explicitly finding  a linear change of variables which makes $\kerf$ a binomial ideal, or to prove that $\imf$ is a monomial algebra as we state below.
\begin{prop}\label{prop:monomial_algebra}
Suppose the algebra $\imf$ is isomorphic to a monomial algebra. Then the ideal $\ker\varphi_\T$ is toric after an appropriate linear change of coordinates. 
\end{prop}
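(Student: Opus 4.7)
The plan is to transport the problem through the given isomorphism $\psi : \mathrm{im}\,\varphi_\T \to A'$, where by hypothesis $A' \subseteq \R[y_1^{\pm 1}, \ldots, y_m^{\pm 1}]$ is generated by monomials. Setting $\Phi := \psi \circ \varphi_\T$, one has $\ker \Phi = \ker \varphi_\T$ since $\psi$ is an algebra isomorphism, so it suffices to produce an invertible linear change of variables $q_r = \sum_s A_{rs} p_s$ on $\R[p]$ under which each $\Phi(q_r) = \sum_s A_{rs} \Phi(p_s)$ is a scalar multiple of a single monomial of $A'$. Once such an invertible $A$ is in hand, the induced map $\R[q] \to A'$ is a monomial map into a Laurent ring, hence by the equivalent characterisation of toric ideals recalled just before the statement its kernel is toric; this kernel is $\ker \varphi_\T$ read in the $q$-coordinates, which yields the claim.

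The next step is to construct $A$. Because $A'$ is a monomial algebra, it carries a natural $\R$-basis of monomials $\{M_\alpha\}$. Writing each $f_r := \Phi(p_r) = \sum_\alpha c_{r\alpha} M_\alpha$ gives a coefficient matrix $C$, and the condition on $A$ becomes that every row of the product $AC$ has at most one nonzero entry. Equivalently, $A$ encodes a basis of $V := \R\{f_1, \ldots, f_n\} \subseteq A'$ consisting of monomials of $A'$, which can then be completed to a full invertible $n \times n$ matrix. The proof thus reduces to showing that the subspace $V$ admits such a monomial basis.

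This last reduction is the main obstacle: for a generic isomorphism $\psi$ the span $V$ need not be a coordinate subspace of $A'$, so one must exploit the specific structure of the parametrisation \cref{eq:parametrisation}. My plan is to use the local sum-to-one relations in $\langle \theta - 1 \rangle$: for each vertex $v$ of $\T$, the cumulative sum $p_{[v]} = \sum_{r \,:\, \lambda_r \text{ passes through } v} p_r$ maps under $\varphi_\T$ to the product of edge labels along the unique root-to-$v$ path, which is a single monomial in $\R[\Theta]$ and therefore, under a well-chosen $\psi$, a monomial of $A'$. These linear combinations $p_{[v]}$ of the atomic probabilities provide the candidate monomials populating $V$, and one selects enough of them to form a basis. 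The coin-flip example in the introduction illustrates this recipe exactly: the triangular change $q_r = p_1 + \cdots + p_r$ is of precisely this form, and yields images $\theta_1^2,\,\theta_1,\,1$ inside $A' = \R[\theta_1]$.
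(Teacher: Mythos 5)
Your setup is right: you correctly reduce to showing that the span $V$ of $\Phi(p_1),\ldots,\Phi(p_n)$ inside the monomial algebra $A'$ admits a basis consisting of monomials of $A'$. But you then claim this reduction can fail for a generic isomorphism $\psi$, and pivot to an unfinished plan built on the sums $p_{[v]}$. This is the gap. The idea you are missing is that one may, and should, take the monomial generating set $m_1,\ldots,m_r$ of $A'$ to be \emph{minimal}. Minimality forces each $m_i$ to be the image under $\Phi$ of a linear form of $\R[p]$ (otherwise $m_i$ would be expressed as a polynomial of higher degree in the other generators, contradicting minimality), and degree considerations force each $\Phi(p_j)$ into $\spn(m_1,\ldots,m_r)$. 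Hence $V=\spn(m_1,\ldots,m_r)$ --- a monomial basis is there for free --- and one completes the change of variables by choosing $n$ linearly independent linear preimages of the $m_i$'s.

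Your fallback via $p_{[v]}$ also has a concrete hole: $\varphi_\T(p_{[v]})$ is represented by a monomial of $\R[\Theta,z]/\langle\theta-z\rangle$, but that quotient is not the Laurent ring containing $A'$; the proposition hands you an abstract isomorphism $\psi$, and there is no reason that $\psi\bigl(\varphi_\T(p_{[v]})\bigr)$ should be a monomial of $A'$. ``Under a well-chosen $\psi$'' is precisely the claim that needs proof and cannot simply be granted. The $p_{[v]}$'s are indeed the right objects in the later, more structured arguments (Theorems~\ref{thm:SIP_toric} and~\ref{thm:extend_toric}), but they do not resolve Proposition~\ref{prop:monomial_algebra} at this level of generality.
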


\begin{proof}
Let $\imf$ be isomorphic to an algebra minimally generated by monomials  $m_1,\ldots,m_r$ in some polynomial ring via a map $f$. Then we have a composition of maps
\begin{equation}
\R[p] \overset{\varphi_\T}{\longrightarrow} \R[\Theta,z]/\langle \theta-z \rangle \overset{f}{\longrightarrow} \R[m_1, \ldots, m_r]
\end{equation}
which is a surjective homomorphism. We claim that the linear span of $f \circ \varphi_\T (\{p_1, \ldots, p_n\})$ equals the linear span of $m_1, \ldots, m_r$. Every monomial $m_i$ is in the image of $f \circ \varphi_\T$, and if it would not be the image of a linear form this would contradict the fact that $m_1, \ldots, m_r$ are minimal generators. Then every $p_i$ must be mapped to a linear combination of $m_1, \ldots, m_r$, since otherwise we would get a non-homogeneous binomial generator of $\ker \varphi_\T$. It follows that there are $n$ linearly independent linear forms $\ell_1,\ldots,\ell_n$ in the union of preimages $\bigcup_{i=1}^k\varphi_\T^{-1}\circ f^{-1}(m_i)$. The ideal $\kerf$ is binomial after the linear change of variables given by $\ell_1,\ldots,\ell_n$. 
\end{proof}

\begin{ex}\label{ex:algebra_binary} 
Let $\T$ be the tree in \cref{fig:monomial_alg_ex}, and let $\Theta^4$ denote the set of monomials of degree four in the variables $\theta_1$, $\theta_2$. Then $\imf$ is the subalgebra of $\R[\theta_1,\theta_2,z]/\langle \theta_1+\theta_2-z\rangle$ generated by $\theta_1^2z^2$, $\theta_1\theta_2z^2$, $\theta_1^2\theta_2z$, $\theta_1^1\theta_2^2$, $\theta_1\theta_2^3$, and $\theta_2^2z^2$.  All of the generators can be expressed as linear combinations of the monomials in $\Theta^4$, using the relation $z=\theta_1+\theta_2$. Conversely, one can verify that the five monomials in $\Theta^4$ are the images of the linear forms $\ell_1=p_1-2p_3+p_4$, $\ell_2=p_3-p_4$, $\ell_3=p_4$, $\ell_4=p_5$,  $\ell_5=p_6-2p_5-p_4$ under $\varphi_\T$. For instance
\begin{equation}
\varphi_\T(p_1-2p_3+p_4)=\theta_1^2z^2-2\theta_1^2\theta_2z+\theta_1^2\theta_2^2=\theta_1^4+2\theta_1^3\theta_2+\theta_1^2\theta_2^2-2\theta_1^3\theta_2-2\theta_1^2\theta_2^2+\theta_1^2\theta_2^2=\theta_1^4.
\end{equation}
It follows that $\Theta^4$ is a different generating set for the algebra $\imf$. As $\R[\Theta^4] \cap \langle \theta_1+\theta_2-z \rangle = \{0\}$, the image of $\varphi_\T$ is isomorphic to the monomial algebra $\R[\Theta^4]$. To get the change of variables which makes $\ker \varphi_\T$ toric we choose another linear form $\ell_6$, not in the span of $\ell_1, \ldots, \ell_5$, which maps to a monomial of $\Theta^4$. For example we may take $\ell_6=p_2-p_3-p_5$, as this maps to $\theta_1^2\theta_2^2$. The ideal $\ker \varphi_\T$ is then the kernel of the monomial map $\R[\ell_1, \ldots, \ell_6] \to \R[\Theta^4]$.  
\end{ex}

\begin{figure}
\centering
\includegraphics{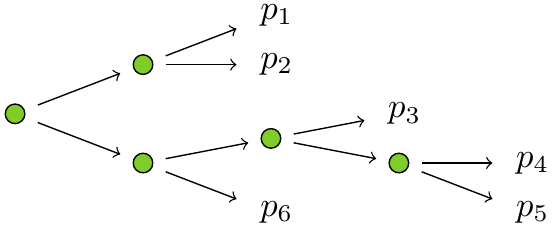}
\caption{A staged tree with edge labels $\theta_1$ and $\theta_2$ that has toric structure.}
\label{fig:monomial_alg_ex}
\end{figure}

A first application of \cref{prop:monomial_algebra} is that if two distinct trees share the same algebra then they must both be toric or non-toric. The same is true if the two algebras change by a permutation or when the parameters of a staged tree are permutations of parameters of another other staged tree.

\begin{cor}\label{lm:permuting}
If for staged trees $\T$ and $\T'$ the algebras  $\imf$ and $\im\,\varphi_{\T'}$  are equal or they change only by a permutation of variables, the staged tree models $\T$ and $\T'$ are simultaneously toric or non-toric. 
\end{cor}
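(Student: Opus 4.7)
The plan is to observe that having toric structure is an isomorphism-invariant property of the algebra $\imf$, and then to check that the two hypotheses of the corollary both produce an $\R$-algebra isomorphism between $\imf$ and $\im\,\varphi_{\T'}$.

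First I would record the following equivalence: the model $\mathcal{M}_\T$ has toric structure if and only if $\imf$ is isomorphic (as an $\R$-algebra) to a monomial algebra. The backward direction is exactly \cref{prop:monomial_algebra}. For the forward direction, suppose $\ell_1,\ldots,\ell_n$ is a basis of linear forms of $\R[p]$ for which $\kerf$ becomes a binomial (and necessarily prime) ideal $I$ in $\R[\ell_1,\ldots,\ell_n]$. Then the composition $\R[\ell_1,\ldots,\ell_n]\twoheadrightarrow \R[\ell_1,\ldots,\ell_n]/I \cong \imf$ exhibits $\imf$ as a prime binomial quotient of a polynomial ring, and such a quotient is always isomorphic to a subalgebra of a Laurent polynomial ring generated by (Laurent) monomials, i.e., a monomial algebra.

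With this equivalence in place, I would then treat the two hypotheses of the corollary uniformly. If $\imf = \im\,\varphi_{\T'}$ as subalgebras of $\R[\Theta,z]/\langle\theta-z\rangle$, then they are trivially isomorphic as $\R$-algebras. If instead $\imf$ and $\im\,\varphi_{\T'}$ differ only by a permutation of variables, then that permutation extends to an $\R$-algebra automorphism of the ambient ring carrying one algebra onto the other, again producing an isomorphism $\imf \cong \im\,\varphi_{\T'}$. In either case, being isomorphic to a monomial algebra is transported from one to the other, so by the equivalence above, $\T$ has toric structure if and only if $\T'$ does.

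There is no substantial obstacle here; the main point is really just to notice that the property singled out by \cref{prop:monomial_algebra} is intrinsic to the isomorphism class of $\imf$, so the corollary is formal once the converse of \cref{prop:monomial_algebra} is spelled out. The only mild care needed is in the second case, where one should verify that the stated permutation of variables genuinely restricts to a well-defined $\R$-algebra map between the two images — which is immediate since a permutation of the generators $\theta(e)$ (respecting the sum-to-one partition on vertices) preserves the ideal $\sumt$ and hence descends to the quotient.
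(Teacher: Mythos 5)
Your proof is correct and matches what the paper leaves implicit: the paper simply asserts the corollary as ``a first application'' of \cref{prop:monomial_algebra}, and your argument is the natural way to fill in the details. You rightly observe that one needs the converse of \cref{prop:monomial_algebra} as well (toric structure implies $\imf$ is isomorphic to a monomial algebra), which follows from the paper's stated equivalence that a binomial prime ideal is the kernel of a monomial map, so that $\R[\ell_1,\ldots,\ell_n]/\kerf \cong \imf$ is a monomial subalgebra of a Laurent ring; with this \emph{iff} in hand the conclusion is indeed just that the isomorphism class of $\imf$ is transported by equality or by a stage-respecting permutation of parameters.
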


\section{Balanced staged tree models}
\label{sec:balanced}


As stated in Theorem \ref{thm:balanced} the ideal $\kerf$ is toric when the tree $\T$ is balanced. The monomial map defining this toric ideal is precisely the map (\ref{eq:homphi}) with the quotient ring $\R[\Theta,z]/\langle \theta-z\rangle$ replaced by the polynomial ring $\R[\Theta,z]$.
In this section we continue the study of balanced staged trees and their prime ideals. 

\subsection{The combinatorial structure of balanced trees}
Our main result on the combinatorial structure of balanced trees is \cref{thm:colour}, which states that a balanced staged tree model always can be represented by a tree with a certain colour structure. To obtain this colour structure we apply the homogenisation process already in the tree, as described in Remark~\ref{rmk:false_leaves}. We also allow the out-degree one vertices to appear anywhere in the tree, not just as extensions in the end of a branch. See \cref{fig:bal_col_ex} for an example. 

\begin{figure}
\centering
\includegraphics{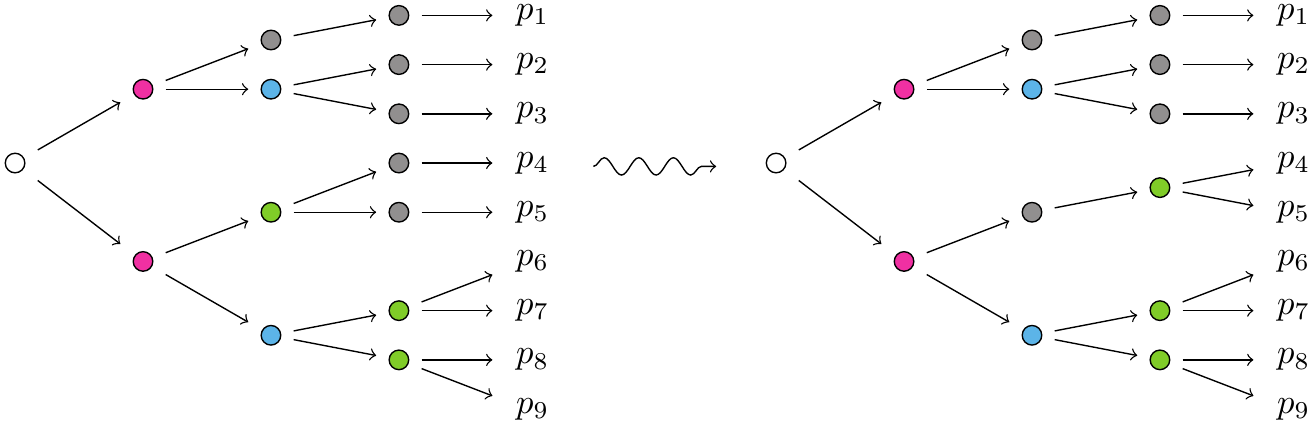}
\caption{Colour structure in a balanced tree.}
\label{fig:bal_col_ex}
\end{figure}

\begin{thm}\label{thm:colour}
To any balanced staged tree we can apply the swap operation so that for every pair of vertices $u$ and $v$ in the same stage, one of the following holds. 
\begin{enumerate}
\item For each index $i$ the two children $u_i$ and $v_i$ are in the same stage. 
\item The children of $u$ are all in the same stage. The same holds for $v$.   
\end{enumerate} 
\end{thm}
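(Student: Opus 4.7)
The plan is to proceed by induction on the depth $d$ of $\T$. The base cases $d \le 1$ are vacuous since there are no non-leaf pairs of same-stage vertices with a non-trivial child structure. For the inductive step, fix $u \sim v$ in $\T$ with ordered children $u_1, \ldots, u_k$ and $v_1, \ldots, v_k$, pairwise identified through the common outgoing labels of the stage of $u$ and $v$. The balanced condition \cref{eq:bal} gives $t(u_i)\,t(v_j) = t(u_j)\,t(v_i)$ for all $i,j$, i.e.\ the rows $(t(u_1),\ldots,t(u_k))$ and $(t(v_1),\ldots,t(v_k))$ are proportional over the fraction field of $\R[\Theta]$.

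I would then split into two sub-cases. If all children of $u$ lie in a single stage, then $t(u_i)/t(u_j)$ is a ratio built only from labels of that one stage (plus deeper shared factors), and the identity $t(u_i)/t(u_j) = t(v_i)/t(v_j)$ combined with the disjointness of labels attached to distinct stages forces all children of $v$ to share a single stage as well; this realises Case~2. Otherwise there are indices $i_0 \neq j_0$ with $u_{i_0} \not\sim u_{j_0}$, and I aim to produce Case~1 by swaps: for each $i$ where $u_i \not\sim v_i$, apply the swap operation to $\T(u_i)$ to promote its root to the colour of $v_i$ (and symmetrically for $\T(v_i)$ when needed), thereby securing $u_i \sim v_i$. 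By \cref{lem:resize} the resulting tree remains balanced, so the inductive hypothesis applies to all subtrees $\T(u_i)$ and gives the desired colour structure at deeper levels.

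The delicate step is checking that the swap is admissible in the second sub-case, i.e.\ that every term of $t(u_i)$ is divisible by one of the labels $\eta_1,\ldots,\eta_m$ of the stage $c(v_i)$. My plan is to exploit the equation $t(u_i)\,t(v_j) = t(u_j)\,t(v_i)$ for an index $j$ witnessing the failure of Case~2: I would argue that $\gcd\bigl(t(v_i),t(v_j)\bigr)=1$, so $t(v_i) \mid t(u_i)$ in $\R[\Theta]$. Since $t(v_i) = \sum_\ell \eta_\ell\, t(v_{i,\ell})$ has every monomial divisible by some $\eta_\ell$, the same then holds monomial-by-monomial for $t(u_i)$, and the swap hypothesis is satisfied.

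The main obstacle I expect is precisely this coprimality claim. Labels from distinct stages are disjoint by definition, but deeper in the tree the same stage can reappear, so one must carefully show that no non-trivial common factor of $t(v_i)$ and $t(v_j)$ can arise once $v_i$ and $v_j$ sit in different stages and the balanced identity propagates. The homogenisation convention of \cref{rmk:false_leaves} will be helpful in this bookkeeping since it pins all leaves to a common depth, letting one compare subtree polynomials level by level without ad hoc adjustments.
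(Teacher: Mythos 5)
The dichotomy you choose (all children of $u$ already share one stage, versus not) is not the one that drives the paper's argument, and the mechanism you propose for the second branch has a genuine gap. Your key step is the coprimality claim $\gcd\bigl(t(v_i),t(v_j)\bigr)=1$ when $v_i\not\sim v_j$, from which you want to extract $t(v_i)\mid t(u_i)$ out of $t(u_i)\,t(v_j)=t(u_j)\,t(v_i)$. This claim is false in general: even when $v_i$ and $v_j$ sit in distinct stages, their subtree polynomials can share a nontrivial common factor coming from deeper structure. For a concrete shape, take $v$ with children $v_1,v_2$ in distinct stages $c_1, c_2$, and suppose all four grandchildren of $v$ lie in a common stage $c_3$ with labels $\gamma_1,\gamma_2$ and lead to leaves. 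Then $t(v_1)=(\alpha_1+\alpha_2)(\gamma_1+\gamma_2)$ and $t(v_2)=(\beta_1+\beta_2)(\gamma_1+\gamma_2)$, so $\gamma_1+\gamma_2$ divides both. Note also that if your coprimality held symmetrically on both the $u$- and $v$-side, the balanced identity would force $t(u_i)$ and $t(v_i)$ to agree up to a scalar, and then to be equal (coefficients are all $1$), which is far stronger than what holds for balanced trees in general. Your ``Case~2'' branch has a parallel issue: from ``all children of $u$ share a stage'' and the balanced identity you can at best conclude that the ratios $t(v_i)/t(v_j)$ match those on the $u$-side; this does not make the $v_i$ lie in one stage, nor does it immediately exhibit a common colour through which every $v_i$-to-leaf path passes, which is what the swap requires.

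The paper sidesteps polynomial divisibility entirely by working with the \emph{multiplicity} $\mult_c(v)$, the largest $m$ such that every $v$-to-leaf path passes through at least $m$ vertices of colour $c$. Multiplicities are additive in products of subtree polynomials, so the balanced identity $t(u_i)t(v_j)=t(u_j)t(v_i)$ yields the clean comparison of \cref{lem:colour}: a strict multiplicity inequality among the children of $u$ transfers to the children of $v$. The right dichotomy is then property \cref{eq:proof_colourthm}: whether for every $i$ there is a colour $c_i$ and a sibling index $j$ with $\mult_{c_i}(v_i)>\mult_{c_i}(v_j)$. If it holds, each $\T(v_i)$ can be swapped to root colour $c_i$ (multiplicity positivity is exactly the admissibility condition for the swap), giving Case~1; if it fails, the children can all be swapped to a common colour, giving Case~2. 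Finally, the paper's proof is not a straight induction on depth of isolated subtrees: it sweeps through the tree in breadth-first order and performs swaps that never alter colours of already-visited vertices or their children, so that pairs $u\sim v$ whose subtrees $\T(u_i)$, $\T(v_j)$ interleave across different branches are still handled consistently. Your per-subtree induction does not address the coordination needed when two same-stage vertices live in different branches below the current level.
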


In order to prove Theorem \ref{thm:colour} we need some preparations. 
We define the \emph{multiplicity} of a colour $c$ at a vertex $v$, denoted $\mult_c(v)$, to be the greatest number $m$ for which every $v$-to-leaf path goes through at least $m$ vertices of the colour $c$. 
If $\theta_1, \ldots, \theta_k$ are the labels associated to the colour $c$ we can also say that $\mult_c(v)$ is the greatest number $m$ for which every term in $t(v)$ is divisible by a monomial in $\theta_1, \ldots, \theta_k$ of degree $m$. If $\mult_c(v)>0$ we can use the swap operation to give the induced subtree $\T(v)$ a root of colour $c$. Notice also that applying the swap operation to $\T(v)$, or to an induced subtree inside $\T(v)$, does not change the multiplicities.   

\begin{lem}\label{lem:colour}
Let $u$ and $v$ be vertices of the same stage in a balanced staged tree, and assume $\mult_c(u_i)>\mult_c(u_j)$ for some colour $c$ and some indices $i,j$. Then $\mult_c(v_i)>\mult_c(v_j)$. 
\end{lem}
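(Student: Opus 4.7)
The plan is to recognise the combinatorial quantity $\mult_c$ as an algebraic invariant of the subtree polynomial $t(w)$ and then read the claim off the balanced identity \cref{eq:bal}. Concretely, if $\theta_1,\ldots,\theta_k$ are the labels of colour $c$, then $\mult_c(w)$ equals the minimum total degree in $\theta_1,\ldots,\theta_k$ occurring among the monomials of $t(w)$. Equivalently, grading $\R[\Theta]$ by \emph{$c$-degree} (the $\theta_i$ of colour $c$ have weight one and all other variables have weight zero), $\mult_c(w)$ is the smallest index of a nonvanishing homogeneous component of $t(w)$, which I shall denote $\mu_c(t(w))$.

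The key algebraic fact I would use is that $\mu_c$ is multiplicative: for any two nonzero polynomials $f,g\in\R[\Theta]$,
\[\mu_c(fg)=\mu_c(f)+\mu_c(g).\]
This is just the observation that the lowest $c$-degree homogeneous component of a product is the product of the two lowest components, which is nonzero because $\R[\Theta]$ is an integral domain. Each of $t(u_i)$, $t(u_j)$, $t(v_i)$, $t(v_j)$ is a sum of products of edge labels with positive coefficients and hence nonzero, so the multiplicativity applies in all four cases.

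With this in hand the proof is immediate. Since $u$ and $v$ are in the same stage, the balanced condition \cref{eq:bal} gives $t(u_i)\,t(v_j)=t(u_j)\,t(v_i)$. Applying $\mu_c$ to both sides and using multiplicativity,
\[\mult_c(u_i)+\mult_c(v_j)=\mult_c(u_j)+\mult_c(v_i),\]
which rearranges to $\mult_c(u_i)-\mult_c(u_j)=\mult_c(v_i)-\mult_c(v_j)$. The hypothesis forces the left-hand side to be strictly positive, so the right-hand side is too, yielding $\mult_c(v_i)>\mult_c(v_j)$.

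The only point that needs a modicum of care is the multiplicativity of $\mu_c$, but this is standard for gradings on integral domains; the rest is bookkeeping. I therefore do not anticipate any real obstacle. One minor subtlety to mention is that the argument is entirely symmetric in $u$ and $v$, so the lemma could equivalently be stated as an equality of differences $\mult_c(u_i)-\mult_c(u_j)=\mult_c(v_i)-\mult_c(v_j)$, a slightly stronger statement that might be useful later in proving \cref{thm:colour}.
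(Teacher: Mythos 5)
Your proof is correct and follows essentially the same route as the paper's: both apply $\mult_c$ to the balanced identity $t(u_i)t(v_j)=t(u_j)t(v_i)$ to obtain $\mult_c(u_i)+\mult_c(v_j)=\mult_c(u_j)+\mult_c(v_i)$ and then rearrange. The only difference is that you spell out the additivity of $\mult_c$ under multiplication (via the $c$-graded lowest component and the integral-domain argument), a step the paper leaves implicit.
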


\begin{proof}
The equality $t(u_i)t(v_j)=t(u_j)t(v_i)$ implies $\mult_c(u_i)+\mult_c(v_j)=\mult_c(u_j)+\mult_c(v_i)$.
If $\mult_c(u_i)>\mult_c(u_j)$ then we must have $\mult_c(v_i)>\mult_c(v_j)$ for the above equality to hold. 
\end{proof}

\begin{proof}[Proof of Theorem \ref{thm:colour}]
In this proof vertices $v$ such that 
\begin{equation}\label{eq:proof_colourthm}
\mbox{for each} \ i \ \mbox{there is a colour} \ c_i \ \mbox{and a} \ j \ \mbox{such that} \ \mult_{c_i}(v_i)>\mult_{c_i}(v_j) 
\end{equation}
will play an important role. If $v$ has the property (\ref{eq:proof_colourthm}) then we can give each $v_i$ the colour $c_i$, using the swap operation on $\T(v_i)$. If $v$ does not have the property (\ref{eq:proof_colourthm}), then either all its children are leaves, or we can use the swap operation on the $\T(v_i)$'s to give all children of $v$ the same colour. It follows from Lemma \ref{lem:colour} that if $u \sim v$ then $u$ satisfies (\ref{eq:proof_colourthm}) if and only if $v$ does with the same colours.

We prove the theorem by providing an algorithm which will give the desired colour structure, assuming the tree is balanced. The algorithm goes through the internal vertices of out-degree greater than one whose children are not leaves. We visit the vertices in weakly increasing order, according to the distance from the root, and do the following.  

\begin{itemize}
\item If $v$ has a colour we have not seen before, we check if $v$ satisfies (\ref{eq:proof_colourthm}). If it does, we give each $v_i$ the colour $c_i$, by using the swap operation. If not, we give all $v_i$'s the same colour. 
\item If $v$ has a colour that we have seen before, we look at the children of the previous vertex $u$ of that colour. If the children of $u$ does not all have the same colour, it means the $u$ has the property (\ref{eq:proof_colourthm}). Then so does $v$, and we can give each $v_i$ the same colour as $u_i$. If all the children of $u$ have the same colour, it means that $u$ did not satisfy (\ref{eq:proof_colourthm}). Then neither does $v$, and we give all children of $v$ the same colour.  
\end{itemize}%
Notice that these steps do not change the colour of any vertex we have already visited, or their children. Therefore, condition \textit{1} or \textit{2} in Theorem \ref{thm:colour} will hold for all vertices $u$ and $v$ in the same stage of out-degree greater than one. For the vertices of out-degree one condition \textit{2} always holds, so we are done.    
\end{proof}

In squarefree staged trees, the root-to-leaf path $\lambda_r$ can be read off uniquely from the monomial  $\varphi_\T(p_r)$. In other words, the ideal $\kerf$ will not contain any relations of degree one. Next, we will study the structure of balanced trees which are \emph{not} squarefree. We use the notation $v_{ij}$ for the grandchildren of $v$, meaning that $v_{ij}$ is the $j$-th child of $v_i$.  

\begin{lem}\label{lem:deg_one}
In a balanced tree, suppose we have a vertex $v$ which is in the same stage as all of its children. Then $t(v_{ij})=t(v_{ji})$.
\end{lem}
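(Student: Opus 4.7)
My plan is to exploit the balance condition \cref{eq:bal} applied to the parent–child stage pair $v \sim v_i$. Since $v$ is in the same stage as each of its children, this condition holds for every $i$, and with children indices $l$ and $m$ it reads $t(v_l)\,t(v_{i,m}) = t(v_m)\,t(v_{i,l})$. Rearranging, the ratio $t(v_{i,m})/t(v_m)$ does not depend on $m$; call it $\alpha_i$. Thus $t(v_{i,m}) = \alpha_i\,t(v_m)$ for every index $m$, in particular $t(v_{i,j}) = \alpha_i\, t(v_j)$.

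The next step is to identify $\alpha_i$ in closed form. Because $v_i$ shares the stage of $v$, the outgoing labels of $v_i$ agree with those of $v$; calling them $\theta_1,\dots,\theta_k$, the recursive definition of the subtree polynomial yields
\begin{equation*}
t(v_i) \;=\; \sum_m \theta_m\, t(v_{i,m}) \;=\; \alpha_i \sum_m \theta_m\, t(v_m) \;=\; \alpha_i\, t(v),
\end{equation*}
so $\alpha_i = t(v_i)/t(v)$. Substituting this back into $t(v_{i,j}) = \alpha_i\, t(v_j)$ gives the manifestly symmetric formula $t(v_{i,j}) = t(v_i)\,t(v_j)/t(v)$, from which $t(v_{i,j}) = t(v_{j,i})$ is immediate.

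To stay in the polynomial ring $\R[\Theta]$ rather than its fraction field, I would phrase the final step as the identity $t(v)\,t(v_{i,j}) = t(v_i)\,t(v_j) = t(v)\,t(v_{j,i})$ and then cancel the nonzero polynomial $t(v)$. I do not foresee any serious obstacle: the only subtlety is noticing that it is the balance relation between $v$ and $v_i$ (rather than the one between $v_i$ and $v_j$, which degenerates into a tautology once $t(v_{i,m}) = \alpha_i\,t(v_m)$ is known) that delivers both ingredients — the proportionality across children of $v_i$, and, through the recursion for $t(v_i)$, the evaluation of $\alpha_i$.
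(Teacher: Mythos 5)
Your argument is correct, and it is a genuinely different route from the paper's. The paper also starts from the balance relation between $v$ and $v_i$, taking the single instance $t(v_i)t(v_{ij}) = t(v_j)t(v_{ii})$, expanding $t(v_i)$ and $t(v_j)$ via the recursion, and then invoking the balance relation between the two siblings $v_i \sim v_j$ to rewrite the right-hand side term by term, after which the common factor $\sum_m \theta_m t(v_{im})$ cancels. You instead use the balance relation $v \sim v_i$ in full (for all pairs of child indices), read off the proportionality $t(v_{im}) = \alpha_i\,t(v_m)$, and then pin down $\alpha_i = t(v_i)/t(v)$ from the recursion for $t(v_i)$; the balance between $v_i$ and $v_j$ never enters. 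Your route buys a closed-form identity $t(v)\,t(v_{ij}) = t(v_i)\,t(v_j)$ whose symmetry in $i$ and $j$ is manifest, which is more informative than the bare conclusion. To stay entirely inside $\R[\Theta]$, as you anticipate, simply multiply the balance relation $t(v_a)t(v_{ij}) = t(v_j)t(v_{ia})$ by $\theta_a$ and sum over $a$: the left side becomes $t(v)\,t(v_{ij})$ and the right side becomes $t(v_j)\,t(v_i)$, after which cancelling the nonzero polynomial $t(v)$ finishes the proof without ever passing through the fraction field.
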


\begin{proof}
Since $v \sim v_i$ we have $t(v_i)t(v_{ij})=t(v_j)t(v_{ii})$ which can also be written as
\begin{equation}\label{eq:lem_deg_one}
(\theta_1t(v_{i1}) + \theta_2t(v_{i2}) +\dots +  \theta_kt(v_{ik}))t(v_{ij}) = (\theta_1t(v_{j1}) + \theta_2t(v_{j2}) +\dots +  \theta_kt(v_{jk}))t(v_{ii}) .
\end{equation}
As $v_i \sim v_j$ we also have $t(v_{ii})t(v_{jm})=t(v_{im})t(v_{ji})$. Applying this to the right hand side of (\ref{eq:lem_deg_one}) we get
\begin{equation}
(\theta_1t(v_{i1}) + \theta_2t(v_{i2}) +\dots +  \theta_kt(v_{ik}))t(v_{ij}) =(\theta_1t(v_{i1}) + \theta_2t(v_{i2}) +\dots +  \theta_kt(v_{ik}))t(v_{ji})
\end{equation}
and it follows that $t(v_{ij})=t(v_{ji})$.
\end{proof}

\begin{prop}\label{prop:deg_one_rel}
Let $u$ and $v$ be vertices of the same stage in a balanced staged tree. Suppose there is a path $\lambda_r$ that goes through both $u$ and $v_j$, for some $j$, where $u$ comes first. Then there is a path $\lambda_s$ going through $u_j$ such that $\varphi_\T(p_r)=\varphi_\T(p_s)$.   
\end{prop}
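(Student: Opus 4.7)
Write $\alpha$ for the label product along the root-to-$u$ portion of $\lambda_r$, $\mu$ for the label product from $u_i$ to $v$ (where $u_i$ denotes the child of $u$ lying on $\lambda_r$), and $\gamma$ for the label product from $v_j$ to $\ell_r$, so that
\begin{equation*}
\varphi_\T(p_r) \;=\; \alpha\,\theta_i\,\mu\,\theta_j\,\gamma.
\end{equation*}
It suffices to exhibit a $u_j$-to-leaf path with label product $\theta_i\mu\gamma$, since the full root-to-leaf path then yields $\alpha\theta_j\cdot\theta_i\mu\gamma=\varphi_\T(p_r)$. Because the swap operation preserves $\varphi_\T$ and, by \cref{lem:resize}, the balanced property, \cref{thm:colour} lets us assume throughout that the tree is in standard colour form.

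The plan is to argue by induction on the distance $d$ between $u$ and $v$. For $d=0$ we have $u=v$, so $v_j=u_j$ lies on $\lambda_r$ and we take $s=r$. For the inductive step the two cases of \cref{thm:colour} applied to $u\sim v$ must be handled separately. In Case~2, where all children of $u$ share a single stage and likewise for $v$, the identity $t(v_{ab})=t(v_{ba})$ supplied by \cref{lem:deg_one} (possibly applied at several consecutive levels of the subtree) allows an explicit swap of the positions of the $\theta_i$ and $\theta_j$ labels along the path, producing $\lambda_s$ directly. In Case~1, where $u_k\sim v_k$ for every index $k$, the balance identity $t(u_i)t(v_j)=t(u_j)t(v_i)$ together with the inductive hypothesis applied to the pair $u_i\sim v_i$ (which sits at distance $d-1$ inside $\T(u_i)$) transfers a chosen monomial of $t(u_i)$ to the mirror monomial $\theta_i\mu\gamma$ of $t(u_j)$.

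The main obstacle is Case~1: the balance identity alone only guarantees that the degree-matched monomial $\mu\theta_j\gamma^2$ appears with equal multiplicity in $t(u_i)t(v_j)$ and in $t(u_j)t(v_i)$, which is weaker than what is needed, because a naive decomposition $\mu\theta_j\gamma^2=(\theta_i\mu\gamma)\cdot(\theta_j\gamma/\theta_i)$ need not yield a monomial in $t(v_i)$. Extracting the specific factor $\theta_i\mu\gamma$ of $t(u_j)$ requires propagating the standard colour form of \cref{thm:colour} through the induction and applying the hypothesis to a compatible pair of vertices inside $\T(u_i)$; the bookkeeping that the mirror structure survives this descent is where the real work lies.
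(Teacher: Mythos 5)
Your proposal has a genuine gap precisely where you flag the ``main obstacle'': Case~1 of \cref{thm:colour} ($u_k\sim v_k$ for every $k$). You try to resolve it by combining the balance identity $t(u_i)t(v_j)=t(u_j)t(v_i)$ with the inductive hypothesis applied to the pair $u_i\sim v_i$, but this does not close the argument. First, as you say, the identity only matches multiplicities of full product monomials and does not hand you the specific factor $\theta_i\mu\gamma$ of $t(u_j)$. Second, the claimed reduction to distance $d-1$ is off: if $u$ is at distance $d$ from $v$, then $u_i$ and $v_i$ are not at distance $d-1$ from each other (measuring inside $\T(u_i)$ gives $d$ again, since $v_i$ sits one level below $v$), so your induction on the distance from $u$ to $v$ does not step down in this case.

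The missing idea is the \emph{resize} operation. Case~1 of \cref{thm:colour} is exactly the setting in which resize applies: since $u_k\sim v_k$ for every $k$ and (after standardising colours) the same holds for all vertices in the stage of $u$, one can collapse the level of $u$'s children with the level of their children. By \cref{lem:resize} the resized tree is still balanced, and its \emph{depth} has decreased by one, so one can induct on the depth of the tree rather than on the distance from $u$ to $v$; afterwards one undoes the resize to recover the path $\lambda_s$ in the original tree. This is what the paper does, and it is why the paper's induction is on depth. Your Case~2 (both $u$ and $v$ have monochromatic children, handled via \cref{lem:deg_one} and a swap) is essentially right and aligns with the paper's Cases~2 and~3, but without the resize move in Case~1 the overall induction does not terminate, and the proof as proposed is incomplete.
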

\begin{proof}
It is enough to consider the case when $u$ is the root, as we can restrict to the subtree $\T(u)$. 
As a first step we apply the algorithm in the proof of Theorem \ref{thm:colour}. This does not change the colour of the root, but it might of course change the appearance of the rest of the tree. However, if we prove the statement for all $v \sim u$ in the new tree it will hold for all $v \sim u$ in the old tree as both trees represent the same model in the same parametrisation. 

The proof is by induction over the depth of the tree. The smallest depth where we can have two vertices in the same stage in the same path is two. One can easily check that for such a tree to be balanced we need all internal vertices to be in the same stage. In this case we have $\varphi_\T(p_r)=\theta_i \theta_j$ and $\varphi_\T(p_s)=\theta_j\theta_i$. 

For the induction step we consider three cases. 
\begin{enumerate}
\item \underline{The children of $u$ do not all have the same colour.} In this case we have $u_i \sim v_i$ for all vertices $v$ that are in the same stage as $u$. Then we use the resize operation on all the vertices in the same stage as $u$. The new tree is also balanced by Lemma \ref{lem:resize}. As the depth has decreased, it follows by induction that the statement is true for this tree. We can easily redo the resize to find the path $\lambda_s$ in the original tree.
\item \underline{The children of $u$ all have the same colour, but not the same colour as $u$ itself.} This case is also illustrated in \cref{fig:prop_deg1}. Here we use the swap operation on $u$ and its children. This results in a new tree with a root followed by a number of induced subtrees whose roots are in the same colour as $u$ in the original tree. By induction there is a path $\lambda_s$ in the same subtree as $\lambda_r$, with the desired properties. We can find this path $\lambda_s$ in the original tree by swapping the root and its children one more time.
\item \underline{The children of $u$ have the same colour as $u$.} If we consider $v$ to be one on the children of $u$, the result follows from Lemma \ref{lem:deg_one}. Otherwise we can swap $u$ and its children and continue as in case 2. \qedhere 
\end{enumerate}
\end{proof}
\begin{figure}
\centering
\begin{subfigure}[b]{0.45\textwidth}
\centering
\includegraphics{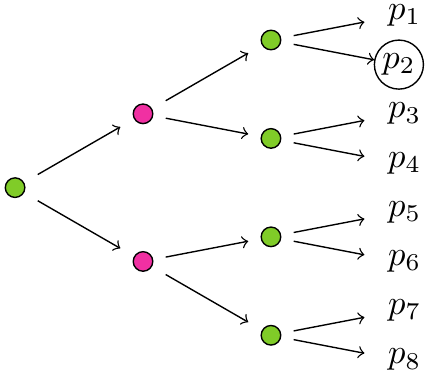}
\caption{Before the swap.\label{fig:A}}
\end{subfigure}
\quad
\begin{subfigure}[b]{0.45\textwidth}
\centering
\includegraphics{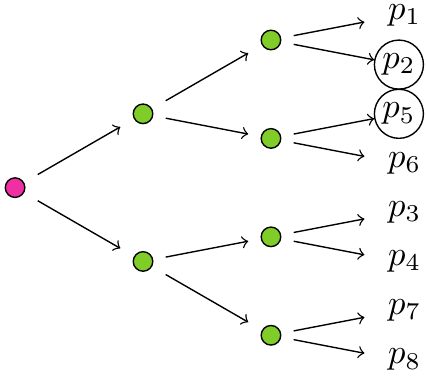}
\caption{After the swap.\label{fig:B}}
\end{subfigure}
\caption{Illustration of case 2 in the proof of Proposition \ref{prop:deg_one_rel}. By applying the swap operation we find the $p_s$ such that $\varphi_\T(p_2)=\varphi_\T(p_s)$ inside a smaller induced subtree. Here $s=5$. }
\label{fig:prop_deg1}
\end{figure}

\subsection{The toric ideal associated to a balanced tree}
We now turn to the problem of finding a generating set for the toric ideal $\kerf$, when $\T$ is balanced. We start with a quick review of the concept of \emph{Gröbner bases} for ideals in polynomials rings. For more details we refer the reader to~\cite{EH}. Every Gröbner basis relies on a monomial order $\succ$. Here we will use the \emph{Degree Reverse Lexicographic} monomial order (DegRevLex). Assume the variables $p_1, \ldots, p_n$ are numbered from top to bottom in the tree, with $p_1$ in the top. We order the variables by $p_1\succ p_2\succ \dots \succ p_n$. More generally, the monomials are ordered by $p_1^{\alpha_1} \cdots p_n^{\alpha_n}\succ p_1^{\beta_1} \cdots p_n^{\beta_n}$ in DegRevLex if $\alpha_1+ \dots + \alpha_n\succ\beta_1 + \dots + \beta_n$ or $\alpha_1+ \dots + \alpha_n=\beta_1 + \dots + \beta_n$ and there is an $i$ for which $\alpha_i<\beta_i$ and $\alpha_j=\beta_j$ for all $j> i$. Every polynomial $f$ is a linear combination of monomials, and the greatest monomial according to the given order provides the \emph{leading term} of $f$. Now, let $I$ be an ideal in a polynomial ring, and let $G$ be a set of polynomials in $I$. The set $G$ is a Gröbner basis for $I$ if the leading term of any polynomial in $I$ is divisible by the leading term of a polynomial in $G$. A fundamental fact is that a Gröbner basis for an ideal $I$ is a generating set for $I$. 

It was conjectured by Duarte and Görgen that the toric ideal of a balanced staged tree have a Gröbner basis of binomials of degree two, \cite[Conjecture 17]{DG}. The conjecture was proved by Ananiadi and Duarte~\cite{AD} in the case of stratified staged trees with all leaves on the same distance from the root. We shall now prove the conjecture in full generality, with the modification that degree one generators are needed in the non-squarefree case. An example of a balanced staged tree which is not squarefree, and hence not stratified, can be found in Figure \ref{fig:prop_deg1}.

Let $\T$ be a balanced staged tree. Suppose $u$ and $v$ are vertices in the same stage in $\T$ and let $\theta_1, \ldots, \theta_k$ be the labels of this stage. Let $m_u$ be the product of the labels along the path from the root to $u$, and similarly for $m_v$. If we multiply (\ref{eq:bal}) by $m_um_v\theta_i \theta_j$ we get 
\begin{equation}
(m_u\theta_it(u_i))(m_v\theta_jt(v_j))=(m_u\theta_jt(u_j))(m_v\theta_it(v_i)).
\end{equation}  
Notice that the terms in each of the factors correspond to a root-to-leaf path. As every term has coefficient 1, there is no cancellation. Hence the above equality gives rise to a number of binomials $p_{r_1}p_{r_2}-p_{s_1}p_{s_2}$ in $\kerf$, where the path $\lambda_{r_1}$ goes through $u_i$, the path $\lambda_{r_2}$ through $v_{j}$, the path $\lambda_{s_1}$ through $u_{j}$, and $\lambda_{s_2}$ through $v_{i}$. Let $G$ be the set of all such binomials, together with all binomials of degree one in $\kerf$. The set $G$ for the tree in \cref{fig:A} is given in Example \ref{ex:GB}. We shall see that $G$ is a Gröbner basis for $\kerf$.

\begin{thm}\label{thm:GB12}
For a balanced staged tree the set $G$ of binomials of degree one and two defined above is a Gröbner basis for the ideal $\kerf$ w.\,r.\,t.\ DegRevLex. 
\end{thm}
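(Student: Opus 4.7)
The plan is to invoke the standard Gröbner basis criterion for toric ideals: since $\ker\varphi_\T$ is a binomial ideal by \cref{thm:balanced} and $G \subseteq \ker\varphi_\T$ by construction, it suffices to show that for every nonzero binomial $f = p^a - p^b \in \ker\varphi_\T$ with $p^a \succ p^b$, some element of $G$ has leading monomial dividing $p^a$. I would prove this by induction on $\deg f$.

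The case $\deg f = 1$ is trivial, since then $f$ lies in $G$ by definition. For $\deg f \geq 2$, interpret $p^a$ and $p^b$ as multisets of root-to-leaf paths whose products of labels under $\varphi_\T$ agree. If some variable in $p^a$ admits a nontrivial identification with a variable in $p^b$ via \cref{prop:deg_one_rel}, the corresponding degree-one binomial in $G$ provides a reduction that lowers $p^a$; hence I may assume $\gcd(p^a, p^b) = 1$ and that each variable appearing in $f$ is the unique preimage of its $\varphi_\T$-image. Now choose a path $\lambda_{r_1}$ contributing to $p^a$ and a path $\lambda_{s_1}$ contributing to $p^b$; since $\varphi_\T(p^a) = \varphi_\T(p^b)$ but $\lambda_{r_1} \neq \lambda_{s_1}$, the two paths must first diverge at some vertex $u$, taking edges labelled $\theta_i$ and $\theta_j$ respectively (from the same stage of $u$). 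Because the total contribution of the label $\theta_j$ must match on both sides, either another path $\lambda_{r_2}$ in $p^a$ passes through a vertex $v$ in the stage of $u$ along the edge labelled $\theta_j$, or $\lambda_{r_1}$ itself passes through a second vertex $v$ of the same stage via $\theta_j$ (the non-squarefree case, handled after possibly applying the swap operation). Either way, the balanced condition \cref{eq:bal} for the stage of $u$ and $v$, after multiplying through by the root-to-$u$ and root-to-$v$ monomials and by $\theta_i\theta_j$, produces a quadratic binomial $g = p_{r_1}p_{r_2} - p_{s_1}p_{s_2} \in G$, where $\lambda_{s_2}$ is the path through $u_j$ and $v_i$. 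Subtracting $(p^a / p_{r_1}p_{r_2})\,g$ from $f$ then yields a new binomial of strictly smaller leading monomial, to which induction applies.

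The main obstacle is verifying that under DegRevLex with the top-to-bottom leaf ordering, the leading monomial of $g$ is indeed $p_{r_1}p_{r_2}$, so that the above reduction legitimately decreases $p^a$. Exploiting the convention that outgoing edges at vertices of a common stage are aligned top-to-bottom, and that leaves are numbered accordingly, I expect that picking $u$ to be the highest possible divergence vertex and $v$ to be the stage-partner of $u$ visited next on $p^a$ will force the leaf indices $r_1, r_2$ to be smaller than $s_1, s_2$ in the sense that determines the DegRevLex comparison. The careful bookkeeping here is the technical heart of the proof; I expect to normalise the tree first by applying \cref{thm:colour} to eliminate pathological colour configurations, and to use the invariance from \cref{lem:resize} to ensure that swap and resize reductions do not interfere with the leading-term analysis. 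Once this orientation is established in all cases, the induction closes and $G$ is a Gröbner basis as claimed.
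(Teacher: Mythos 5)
Your setup is sound and matches the paper's: both reduce to showing that the leading monomial of any binomial in $\ker\varphi_\T$ with coprime terms is divisible by the leading monomial of some $g \in G$, and both intend to produce the needed quadratic $g$ from the balancing condition \cref{eq:bal} at a suitable pair of vertices. But the very step you flag as the ``main obstacle'' and the ``technical heart'' is precisely what the proof has to settle, and your plan for it is both vague and misaligned with what actually works.

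Concretely, the gap is in the choice of divergence vertex and the orientation of indices. You propose comparing $\lambda_{r_1}$ against $\lambda_{s_1}$ and letting $u$ be their first point of divergence, then ``picking $u$ to be the highest possible divergence vertex.'' But for DegRevLex with $p_1 \succ \cdots \succ p_n$, the ordering of a binomial is governed by its \emph{smallest} variables, not its largest: if $p^a \succ p^b$ and $\gcd(p^a,p^b)=1$, what you can guarantee is that the smallest variable of $p^a$ is strictly larger than the smallest variable of $p^b$ (i.e.\ $p_{r_d} \succ p_{s_d}$ in the paper's notation), while the largest variables $p_{r_1}, p_{s_1}$ may compare either way. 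The paper therefore takes $u$ to be the vertex where $\lambda_{r_d}$ and $\lambda_{s_d}$ split, so that the edge indices satisfy $i<j$, and it is this that forces the crucial observation that \emph{none} of $\lambda_{r_1},\dots,\lambda_{r_d}$ passes through $u_j$. That fact is what drives the case analysis: the $\theta_j$-factor that must appear somewhere in $\varphi_\T(p^a)$ can only come from a $\theta_j$-edge strictly after $u$ on a path through $u$ (handled via \cref{prop:deg_one_rel}, yielding a degree-one reducer), or before $u$ on a path through $u$, or on a path missing $u$ altogether; in the latter two cases a degree-two $g$ is constructed and its leading term is verified by direct index comparison. Your proposal skips this trichotomy entirely, and your normalisation via \cref{thm:colour} and \cref{lem:resize} is not what the paper uses here (those tools serve \cref{prop:deg_one_rel}, not the leading-term verification). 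Without the correct choice of $u$ and the resulting case split, the claimed DegRevLex comparison ``$r_1, r_2$ smaller than $s_1, s_2$'' can fail, and the reduction step does not close.

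One minor structural note: your induction on degree is harmless but redundant. Once you exhibit $g \in G$ whose leading monomial divides $p^a$, the Gröbner-basis criterion is satisfied for that $f$ directly; there is no need to subtract and recurse. The paper accordingly gives a direct, non-inductive argument.
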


\begin{proof}
We know that $\kerf$ is generated by homogeneous binomials. It follows from Buchberger's algorithm~\cite{EH} that every binomial ideal has a Gröbner basis of binomials. Hence we are done if we can prove that the leading term of a binomial in $\kerf$ is divisible by the leading term of a binomial of in $G$. Let $f=p_{r_1} \dots p_{r_d}-p_{s_1} \cdots p_{s_d} \in \kerf$ where $p_{r_1} \cdots p_{r_d}$ is the leading term. We assume that $p_{r_1}\succ \dots \succ p_{r_d}$ and $p_{s_1}\succ \dots \succ p_{s_d}$. We may also assume that $f$ is not divisible by a monomial. This implies that $p_{r_d}\succ p_{s_d}$. 

Let $u$ be the vertex where the two paths $\lambda_{r_d}$ and $\lambda_{s_d}$ split. Say $\lambda_{r_d}$ then follows an edge labelled $\theta_i$, and $\lambda_{s_d}$ an edge labelled $\theta_j$. As $p_{r_d}\succ p_{s_d}$ we have $i<j$. Let $m$ be the product of the labels along the path from the root to $u$. Then
\begin{equation}
\varphi_\T(p_{r_1}) \cdots \varphi_\T(p_{r_{d-1}})\frac{\varphi_\T(p_{r_d})}{m}=\varphi_\T(p_{s_1}) \cdots \varphi_\T(p_{s_{d-1}})\frac{\varphi_\T(p_{s_d})}{m}
\end{equation}
and since the monomial $\varphi_\T(p_{s_d})/m$ is divisible by $\theta_j$, one of the factors in the left hand side must be divisible by $\theta_j$ as well. Notice that none of $\lambda_{r_1}, \ldots, \lambda_{r_d}$ goes through $u_j$, as $p_{r_1}\succ \dots \succ p_{r_d}$ and $\lambda_{r_d}$ takes the $i$-th edge at $u$. Then we have the following cases.
\begin{enumerate}
\item \underline{Some $\lambda_{r_t}$ goes through $u$ and an edge labelled $\theta_j$, and the $\theta_j$-edge appears after $u$.} Then by Proposition \ref{prop:deg_one_rel} there is a $\lambda_{r'_t}$ that takes the $j$-th edge at $u$ and $p_{r_t}-p_{r'_t} \in G$. As $i<j$ we have $p_{r_t}\succeq p_{r_d}\succ p_{r'_t}$, and $p_{r_t}$ is the leading term of $p_{r_t}-p_{r'_t}$. 
\item \underline{Some $\lambda_{r_t}$, with $t<d$, goes through $u$ and an edge labelled $\theta_j$, and the $\theta_j$-edge  appears before $u$.}\\ The $\theta_j$-edge is an outgoing edge of some vertex $v$ in the same stage as $u$. As $v$ and $v_j$ lie on the path leading to $u$, the path $\lambda_{s_d}$ must also go through $v_j$. At $u$ the path $\lambda_{r_t}$ takes the $l$-th edge for some $l<j$. Then we have $p_{r_t}p_{r_d}- p_{r'_d}p_{r'_t} \in G$ such that $p_{r'_t}$ takes the $j$-th edge at $u$, and $p_{r'_d}$ takes the $l$-th edge at $v$. Then $p_{r'_d}\succ p_{r'_t}$ and $p_{r_s}\succ p_{r'_t}$, which makes $p_{r_t}p_{r_d}$ the leading term. 
\item \underline{Some $\lambda_{r_t}$, with $t<d$, goes through an edge labelled $\theta_j$ and does not go through $u$.} So there is a vertex $v$ in the same stage as $u$, and they are on different branches. Since $p_{r_t}\succ p_{r_d}$ the vertex $v$ must sit above $u$ in the tree. We have $p_{r_t}p_{r_d}-p_{r'_t}p_{r'_d} \in G$ such that $\lambda_{r'_d}$ takes the $j$-th edge at $u$ and $\lambda_{r'_t}$ takes the $i$-th edge at $v$. Then $p_{r'_t} \succ p_{r'_d}$ and $p_{r_d}\succ p_{r'_d}$, which makes $p_{r_t}p_{r_d}$ the leading term.  
\end{enumerate}%
In all three cases we have found an element in $G$ with a leading term which divides the leading term of $f$.
\end{proof}

Sometimes it can be useful to identify variables that are mapped to the same monomial under $\varphi_\T$, and in this way get rid of the degree one relations. For this purpose, let $\R[\bar{p}]$ be the polynomial ring on the subset of the variables $p_1, \ldots, p_n$ obtained by removing $p_r$ if there is a $s>r$ such that $\varphi_\T(p_r)=\varphi_\T(p_s)$. Let $\bar{\varphi}_{\T}$ be the map $\varphi_\T$ restricted to $\R[\bar{p}]$. The two maps have identical images.

\begin{thm}\label{thm:GB2}
For a balanced staged tree, the degree two binomials of $\ker \bar \varphi_\T$ are a Gröbner basis for $\ker \bar \varphi_\T$ w.\,r.\,t.\ DegRevLex. 
\end{thm}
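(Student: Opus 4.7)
The plan is to reduce the claim to \cref{thm:GB12} by observing that $\ker\bar\varphi_\T = \ker\varphi_\T \cap \R[\bar p]$ and tracking the Gröbner basis $G$ of $\ker\varphi_\T$ through the variable elimination passing from $\R[p]$ to $\R[\bar p]$. Since $\bar\varphi_\T$ is still a monomial map, $\ker\bar\varphi_\T$ is a toric (hence binomial) ideal, and by Buchberger-type reasoning it suffices to show that for every binomial $f = p^a - p^b \in \ker\bar\varphi_\T$ with $p^a \succ p^b$ in DegRevLex, the leading monomial $p^a$ is divisible by the leading term of some degree-two binomial in $\ker\bar\varphi_\T$.

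First I would apply \cref{thm:GB12}, viewing $f$ as an element of $\ker\varphi_\T$: the leading term $p^a$ is divisible by the leading term of some $g \in G$. Suppose for contradiction that $g$ has degree one, say $g = p_r - p_s$ with $r < s$ and $\varphi_\T(p_r) = \varphi_\T(p_s)$. Then $p_r \mid p^a$, so $p_r$ appears in $f$; but the existence of such an $s > r$ is exactly the condition under which $p_r$ is removed in the construction of $\R[\bar p]$, contradicting $f \in \R[\bar p]$. Hence $g$ must be a degree-two binomial $p_{r_1}p_{r_2} - p_{s_1}p_{s_2}$ with leading term $p_{r_1}p_{r_2}$, and from $p_{r_1}p_{r_2} \mid p^a$ we see that both $p_{r_1}$ and $p_{r_2}$ are among the kept variables.

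If one or both of $p_{s_1}, p_{s_2}$ has been removed, I would replace each removed $p_{s_j}$ by the unique kept variable $p_{s_j'}$ satisfying $\varphi_\T(p_{s_j}) = \varphi_\T(p_{s_j'})$; by construction $s_j' > s_j$, so $p_{s_j'} \prec p_{s_j}$ in DegRevLex. The resulting binomial $g' = p_{r_1}p_{r_2} - p_{s_1'}p_{s_2'}$ lies in $\ker\bar\varphi_\T$ because the substitutions preserve $\varphi_\T$-images, and a direct DegRevLex check at the rightmost varying index shows that the second monomial has strictly decreased, so the leading term of $g'$ remains $p_{r_1}p_{r_2}$. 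Hence $g'$ is the desired degree-two witness.

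The main subtlety is the degree-one step, and the observation that resolves it — that the variables removed in forming $\R[\bar p]$ are exactly those appearing as leading terms of the degree-one part of $G$ — is engineered into the very definition of $\R[\bar p]$. Thus the bulk of the argument is the routine monomial-order bookkeeping in the substitution step, and no new combinatorial input beyond \cref{thm:GB12} is needed.
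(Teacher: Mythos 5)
Your proof is correct and follows essentially the same route as the paper's: apply Theorem~\ref{thm:GB12}, rule out degree-one witnesses by observing their leading terms are exactly the discarded variables, then replace any discarded variables in the trailing monomial by their kept counterparts while noting the leading term is unaffected. The only (cosmetic) difference is that you spell out the DegRevLex bookkeeping for the substitution step a bit more explicitly than the paper does.
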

\begin{proof}
Let $f=p_{r_1} \!\cdots p_{r_d}-p_{s_1}\! \cdots p_{s_d} \in \ker \bar \varphi$, where  $p_{r_1}\! \cdots p_{r_d}$ is the leading term. As we also have $f \in \kerf$ the leading term of $f$ is divisible by the leading term of a binomial $g \in \kerf$ of degree one or two, by Theorem \ref{thm:GB12}. However, the leading terms of the degree one binomials in $\kerf$ are precisely the variables we removed in the construction of $\R[\bar p]$.  Hence $g$ must be of degree two. The leading term of $g$ is monomial in (one or two of) the variables $p_{r_1}, \ldots, p_{r_d} \in \R[\bar p]$. If the non-leading term of $g$ is divisible by some $p_i$ for which there is a $j> i$ such that $\varphi_\T(p_i)=\varphi_\T(p_j)$, we substitute $p_i$ by $p_j$ in $g$. This produces a binomial $\bar g \in \ker \bar \varphi_\T$ and does not change the leading term. Hence $p_{r_1}\! \cdots p_{r_d}$ is divisible by the leading term of the degree two binomial $\bar g \in \ker \bar \varphi_\T$.
\end{proof}

\begin{ex}\label{ex:GB}
Consider the tree in \cref{fig:A}. The Gröbner basis of degree one and two binomials of $\kerf$ from Theorem \ref{thm:GB12} is
\begin{align}
&p_2-p_5, \ p_4-p_7, \ p_2p_3-p_1p_4, \ p_2p_5-p_1p_6, \ p_2p_7-p_1p_8, \ p_4p_5-p_3p_6, \ p_4p_7-p_3p_8, \ p_6p_7-p_5p_8, \\
&p_2^2-p_1p_6,\  p_2p_3-p_1p_7, \ p_2p_4-p_1p_8,\ p_1p_4-p_3p_5,\ p_2p_4-p_3p_6,\ p_4^2-p_3p_8,\ p_5^2-p_1p_6,\ p_3p_6-p_5p_7, \\
&p_4p_6-p_5p_8,\ p_5p_7-p_1p_8,\ p_6p_7-p_2p_8,\ p_7^2-p_3p_8,\ p_3p_5-p_1p_7,\ p_3p_6-p_1p_8,\ p_4p_5-p_2p_7,\  p_4p_6-p_2p_8.
\end{align}
The Gröbner basis for $\ker \bar \varphi_\T \subset \R[p_1,p_3,p_5,p_6,p_7,p_8]$ in Theorem \ref{thm:GB2} is 
\begin{equation}
p_6p_7-p_5p_8,\  p_5^2-p_1p_6,\ p_3p_6-p_5p_7,\ p_5p_7-p_1p_8,\ p_7^2-p_3p_8,\  p_3p_5-p_1p_7 ,\ p_3p_6-p_1p_8.
\end{equation}
\end{ex}

\subsection{The monomial algebra associated to a balanced tree}

For a balanced tree $\T$ let $A_\T$ denote the subalgebra of $\R[\Theta,z]$ generated by the monomials corresponding to the root-to-leaf paths. As this set of monomials is exactly the parametrisation for the toric ideal $\kerf$ we have $A_\T \cong  \R[p] / \kerf$. In this section we will discuss some properties of $A_\T$ that are central in commutative algebra. In particular we shall see that $A_\T$ is \emph{Koszul}, \emph{normal}, and \emph{Cohen-Macaulay}. We will briefly recall the definitions of these properties. For a more extensive review we refer the reader to \cite{EH}.

A $K$-algebra $A$ is \emph{Koszul} if the field $K$ has a linear free resolution over $A$. We recall two fundamental results about Koszul algebras. Suppose $A=K[x_1, \ldots, x_n]/I$. If $A$ is Koszul then $I$ is generated in degree two. Moreover, if $I$ has a Gröbner basis consisting of polynomials of degree two then $A$ is Koszul~\cite{Anick}. These give us an immediate corollary of Theorem \ref{thm:GB2}. 

\begin{cor}
For a balanced staged tree $\T$, the associated monomial algebra $A_\T$ is Koszul.
\end{cor}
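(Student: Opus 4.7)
The plan is to reduce the claim directly to Anick's theorem by passing to the minimal presentation of $A_\T$. Since $A_\T$ is by definition the subalgebra of $\R[\Theta,z]$ generated by the root-to-leaf path monomials, it is isomorphic to $\R[p]/\ker\varphi_\T$. However, in the non-squarefree case this presentation is not minimal, because Proposition \ref{prop:deg_one_rel} supplies degree one relations $p_r-p_s$ whenever two distinct root-to-leaf paths carry the same edge-label product. Eliminating one variable for each such relation produces the smaller polynomial ring $\R[\bar p]$ introduced just before Theorem \ref{thm:GB2}, and we obtain an isomorphism $A_\T \cong \R[\bar p]/\ker\bar\varphi_\T$. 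This is now a presentation by a minimal number of algebra generators, all of which map to distinct monomials.

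With this minimal presentation in hand, the conclusion is immediate. Theorem \ref{thm:GB2} asserts that $\ker\bar\varphi_\T$ admits a Gr\"obner basis consisting entirely of binomials of degree two with respect to DegRevLex. By the classical result of Anick~\cite{Anick}, any quotient of a polynomial ring by an ideal possessing a quadratic Gr\"obner basis is Koszul. Applying this to $\R[\bar p]/\ker\bar\varphi_\T$ yields that $A_\T$ is Koszul, as Koszulness depends only on the isomorphism class of the algebra, not on the choice of presentation.

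There is essentially no obstacle in the proof, since all the hard work has already been done in establishing Theorem \ref{thm:GB2}; the only subtlety is making sure one applies Anick's criterion to a presentation that is actually generated in degree two, which forces the passage from $\R[p]$ to $\R[\bar p]$. Everything else is a direct citation.
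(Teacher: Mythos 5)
Your argument is exactly the paper's: the corollary is stated as an immediate consequence of Theorem \ref{thm:GB2}, which gives the quadratic Gr\"obner basis for $\ker\bar\varphi_\T$ in the reduced ring $\R[\bar p]$, and Anick's theorem then applies. You have usefully spelled out the (correct) point that one must pass from $\R[p]$ to $\R[\bar p]$ to get a presentation generated purely in degree two, which the paper leaves implicit by citing Theorem \ref{thm:GB2} rather than Theorem \ref{thm:GB12}.
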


An algebra generated by monomials, such as $A_\T$, can be considered a semigroup ring. The semigroup is the set of monomials in the ring, with multiplication as the semigroup operation. A semigroup $M$ is called \emph{normal} if it is finitely generated and has the following property. If there are $a, b, c \in M$ such that $ab^N=c^N$ for some positive integer $N$ then there is a $d \in M$ such that $a=d^N$. The ring $A_\T$ is a Noetherian domain, meaning that it is a normal ring if it is integrally closed in its field of fractions. Moreover, recall that a ring $R$ is \emph{Cohen-Macaulay} if $\dim(R)=\operatorname{depth}(R)$, where in general $\dim(R)\ge\operatorname{depth}(R)$. We summarise two important results on normal semigroup rings.

\begin{thm}{\cite[Proposition 1, Theorem 1]{Hochster}}
Let $M$ be a semigroup of monomials, and let $K[M]$ denote the semigroup ring over a field $K$. Then $K[M]$ is normal if and only if $M$ is normal. Moreover, if $K[M]$ is normal, then it is Cohen-Macaulay.
\end{thm}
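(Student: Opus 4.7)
The plan is to treat the two parts separately: the biconditional between normality of $K[M]$ and normality of $M$, and the subsequent Cohen-Macaulay conclusion.

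For the equivalence, I would set up the natural $G$-grading on $K[M]$, where $G = M-M$ is the group generated by $M$, so that $K[M] = \bigoplus_{m \in M} K \cdot m$ sits inside the Laurent-type ring $K[G] = \bigoplus_{g \in G} K\cdot g$, which is the fraction ring in the monomial direction. For the direction $K[M]$ normal $\Rightarrow$ $M$ normal, suppose $ab^N = c^N$ with $a,b,c \in M$ and form $\alpha = c/b \in \mathrm{Frac}\, K[M]$. Then $\alpha^N = a \in K[M]$, so $\alpha$ is a root of $x^N - a$ and integrality gives $\alpha \in K[M]$. Since $\alpha$ is already $G$-homogeneous of degree $c - b$, the only way it can sit in $K[M]$ is as $\alpha = \lambda d$ for some $d \in M$ and $\lambda \in K^\times$; matching $\alpha^N = a$ then forces $\lambda^N = 1$ and $a = d^N$, as required. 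For the converse, take an integral $\alpha \in \mathrm{Frac}\, K[M]$, decompose it into $G$-homogeneous components, and observe that the integral equation splits along $G$-degrees so that each component $\lambda g$ with $g \in G$ is again integral. Integrality of $\lambda g$ forces some power $g^n$ to lie in $M$, and normality of $M$ then yields $g \in M$, so each component and hence $\alpha$ lies in $K[M]$.

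For the Cohen-Macaulay part, the cleanest route is Hochster's original approach: realise the normal affine semigroup $M$ as $G \cap C$, where $C$ is a rational polyhedral cone cut out by finitely many half-spaces $H_1, \dots, H_r$. Each half-space condition corresponds to a $\mathbb{Z}$-valued character, so the intersection presents $K[M]$ as the ring of invariants of a torus (or more generally a reductive group) acting linearly on a polynomial ring that surjects onto $K[G]$. The Hochster-Roberts theorem then gives that rings of invariants of reductive groups acting on polynomial rings are Cohen-Macaulay. Alternatively, one can compute the local cohomology $H^i_{\mathfrak{m}}(K[M])$ at the irrelevant ideal via the Danilov-Stanley formula in terms of the face lattice of $C$, and read off the vanishing for $i < \dim K[M]$ directly.

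The main obstacle is the Cohen-Macaulay implication. The equivalence of the two notions of normality is essentially formal once the $G$-grading is in place, but Cohen-Macaulayness requires genuine input: either the Hochster-Roberts invariant-theoretic machinery, which needs care in positive characteristic and a nontrivial description of $K[M]$ as invariants, or the combinatorial local cohomology computation, which needs a careful analysis of the face poset of $C$. Because the statement is classical and attributed to Hochster, I would simply invoke these results rather than reprove them in this paper.
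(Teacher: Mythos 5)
The paper does not prove this statement: it is imported wholesale from Hochster's 1972 paper (hence the citation {\cite[Proposition 1, Theorem 1]{Hochster}}), and the authors invoke it as a black box to derive Theorem 3.11 (that $A_\T$ is normal and Cohen-Macaulay for a balanced tree). Your proposal correctly recognises this and ends by proposing to do the same thing, so in that sense you match the paper exactly.

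Your sketch of the underlying argument is sound. The $G$-graded setup for the biconditional is right: for $K[M]$ normal $\Rightarrow$ $M$ normal you form $\alpha = c/b$, get $\alpha^N = a \in K[M]$, integrality places $\alpha$ in $K[M]$, and homogeneity identifies $\alpha$ with a monomial $d\in M$ with $d^N=a$. For the converse one should note two small points to make it airtight: (i) the fraction field of $K[M]$ equals that of $K[G]$ (invert the monomials of $M$), and $K[G]$ is a Laurent ring hence normal, so an integral $\alpha$ already lies in $K[G]$ and can be split into homogeneous pieces $\lambda g$; and (ii) the passage from ``$g^m \in M$ for some $m\geq1$'' to ``$g \in M$'' uses Hochster's normality condition together with torsion-freeness of $G$, which holds here because $M$ consists of honest monomials and hence sits inside a free abelian group. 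For the Cohen--Macaulay implication, you are right that this is the genuinely non-formal part; one minor historical correction is that Hochster's 1972 paper precedes the Hochster--Roberts theorem (1974) and proves Cohen--Macaulayness by a direct combinatorial argument on the cone rather than via invariant theory, though both routes, as well as the Danilov--Stanley local cohomology formula, are valid. For the purposes of this paper, citing the result is the appropriate choice, and your assessment of where the real work lies is accurate.
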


Let $\T$ be a balanced staged tree, and let $M$ be the semigroup generated by the monomials $\varphi_\T(p_r)$, $r=1, \ldots, n$, considered as monomials in $\R[\Theta,z]$. We shall prove that $M$ is normal, and hence $A_\T$ is normal. 

So, suppose we have a monomial $m \in M$ and two monomials $\varphi_\T(p_{r_1}) \cdots \varphi_\T(p_{r_d})$ and $\varphi_\T(p_{s_1}) \cdots \varphi_\T(p_{s_e})$ such that $m(\varphi_\T(p_{r_1}) \cdots \varphi_\T(p_{r_d}))^N=(\varphi_\T(p_{s_1}) \cdots \varphi_\T(p_{s_e}))^N$ for some positive integer $N$. Assume we have indexed $p_{s_1}, \ldots, p_{s_e}$ so that the path $\lambda_{s_1}$ agrees with $\lambda_{r_1}$ in as many steps as possible. Suppose they separate in a vertex $u$, where $\lambda_{r_1}$ takes the edge labelled $\theta_i$ and $\lambda_{s_1}$ the edge labelled $\theta_j$. Then there must be a matching $\theta_j$ in $\varphi_\T(p_{s_1}) \cdots \varphi_\T(p_{s_e})$. Here we can follow the same steps as in the proof of Theorem \ref{thm:GB12} and get that there is $p_{s'_1}$ and $p_{s'_t}$ such that $\varphi_\T(p_{s'_1})\varphi_\T( p_{s'_t})=\varphi_\T(p_{s_1})\varphi_\T(p_{s_t})$, for some $t$, and $\lambda_{s'_1}$ agrees with $\lambda_{r_1}$ in one more step. If $p_{s'_1} \ne p_{r_1}$ we repeat this argument. Let $p_{s'_1}, \ldots, p_{s'_e}$ be the variables we end up with after repeating the process enough times to get $p_{s'_1} = p_{r_1}$.
Now we have
\begin{equation}
m \varphi_\T(p_{r_1})^N(\varphi_\T(p_{r_2}) \cdots \varphi_\T(p_{r_d}))^N=\varphi_\T(p_{r_1})^N(\varphi(p_{s'_2}) \cdots \varphi_\T(p_{s'_e}))^N 
\end{equation}
which implies 
\begin{equation}  
m (\varphi_\T(p_{r_2}) \cdots \varphi_\T(p_{r_d}))^N=(\varphi(p_{s'_2}) \cdots \varphi_\T(p_{s'_e}))^N .
\end{equation}
We can continue this process until we have cancelled all factors except $m$ in the left hand side. Then $m$ is indeed the $N$-th power of an element in $M$. 
To summarise we have proved the following theorem.

\begin{thm}\label{thm:normal}
For a balanced staged tree $\T$, the associated monomial algebra $A_\T$ is normal and Cohen-Macaulay.
\end{thm}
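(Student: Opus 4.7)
The plan is to invoke Hochster's theorem cited just before the statement: since $A_\T$ is the semigroup ring $\R[M]$, where $M$ is the finitely generated sub-semigroup of monomials in $\R[\Theta,z]$ generated by $\varphi_\T(p_1),\ldots,\varphi_\T(p_n)$, it suffices to show that $M$ is normal; Cohen-Macaulayness will then follow automatically. So the real content of the proof is that $M$ satisfies the cancellation-by-$N$-th-roots property defining normality of a semigroup.

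Concretely, I would start with an identity $m \cdot b^N = c^N$ with $m \in M$ and $b,c$ products of generators of $M$, say $b = \varphi_\T(p_{r_1})\cdots\varphi_\T(p_{r_d})$ and $c = \varphi_\T(p_{s_1})\cdots\varphi_\T(p_{s_e})$, and aim to produce $d\in M$ with $m = d^N$ by inductive cancellation on $d$. The base case $d=0$ is vacuous. For the inductive step I would reindex the factors on the right hand side so that $p_{s_1}$ equals $p_{r_1}$, then use unique factorisation of monomials in $\R[\Theta,z]$ to cancel $\varphi_\T(p_{r_1})^N$ from both sides and apply the induction hypothesis.

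The reindexing is the main technical step and is essentially already assembled in the paragraph preceding the theorem. I would make it precise as follows: compare $\lambda_{r_1}$ and $\lambda_{s_1}$ and let $u$ be the vertex where they first diverge, with $\lambda_{r_1}$ taking an edge labelled $\theta_i$ and $\lambda_{s_1}$ taking $\theta_j$ at $u$. Since $\theta_i$ appears on the left, it must appear in some factor $\varphi_\T(p_{s_t})$ on the right. Running the same case analysis used in the proof of Theorem \ref{thm:GB12} (invoking Proposition \ref{prop:deg_one_rel} to produce a degree-one equivalence, or the quadratic exchange relations coming from the balancedness condition \cref{eq:bal}), there exist indices $s'_1, s'_t$ with
\begin{equation}
\varphi_\T(p_{s_1})\varphi_\T(p_{s_t}) = \varphi_\T(p_{s'_1})\varphi_\T(p_{s'_t})
\end{equation}
and $\lambda_{s'_1}$ agreeing with $\lambda_{r_1}$ for one more edge than $\lambda_{s_1}$ did. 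Iterating this at most $d$ times, where $d$ is the depth of $\T$, yields a reindexing with $p_{s'_1} = p_{r_1}$.

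The main obstacle is the bookkeeping around the exchange: I need to confirm that (i) the exchange preserves the equation $m b^N = c^N$, which is automatic since both sides are unchanged as monomials, and (ii) the match-length strictly increases at each step, which guarantees termination in finitely many steps. Once these two facts are verified, the induction collapses to give the $N$-th root $d$ of $m$ explicitly as the product of the factors $\varphi_\T(p_{s''_j})$ that remain on the right after all $N$ copies of $\varphi_\T(p_{r_1}),\ldots,\varphi_\T(p_{r_d})$ have been stripped off, proving that $M$ is normal and hence finishing the proof.
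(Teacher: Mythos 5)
Your proposal follows essentially the same route as the paper: reduce via Hochster's theorem to normality of the semigroup $M$, then prove normality by reindexing $p_{s_1},\ldots,p_{s_e}$ with the exchange moves from the proof of Theorem~\ref{thm:GB12} (together with Proposition~\ref{prop:deg_one_rel}) so that $\lambda_{s_1}$ agrees with $\lambda_{r_1}$ step by step, cancelling $\varphi_\T(p_{r_1})^N$ and iterating. The bookkeeping points you flag are the same ones the paper also leaves implicit, so this is the same proof.
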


For a non-balanced tree the algebra $\R[p]/\kerf$ need not to be Koszul, normal, or Cohen-Macaulay.

\begin{figure}
\centering
\includegraphics{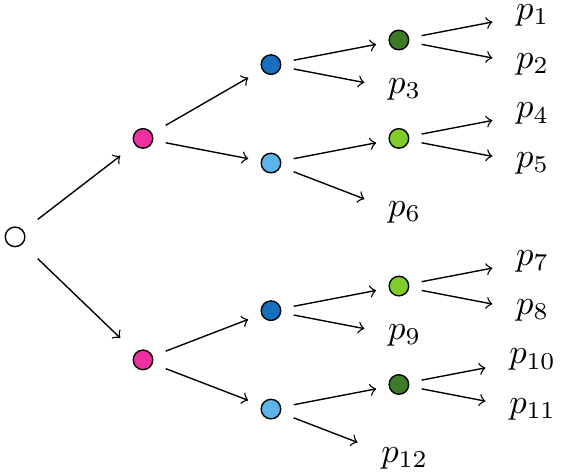}
\caption{A staged tree for which $A_\T$ is not Koszul, normal, or Cohen-Macaulay.}
\label{fig:nonCM}
\end{figure}

\begin{ex}\label{ex:nonCM}
The tree in \cref{fig:nonCM} is not balanced, but the ideal $\kerf$ is toric after a linear change of variables. We will return to this fact in Example~\ref{ex:continued}. A computation in Macaulay2~\cite{M2} shows that the algebra is neither normal nor Cohen-Macaulay. A minimal generating set for $\kerf$ needs binomials of degree two and four, so the quotient ring is not Koszul. This also provides a counter-example to \cite[Conjecture 16]{DG} on the generating set of the prime ideal of a staged tree, as the given generators all are of degree two.
\end{ex}

\section{Ideal of minors for a staged tree model }\label{sec:ideals}


Associated to a staged tree is its \emph{ideal of model invariants}~\cite{DG}. For $u_1,\ldots,u_{k}$ denoting the children of an internal vertex $u$ in a stage of colour $c$, we define 
\begin{equation}
    I_{c}= \langle p_{[u_i]}p_{[v]}-p_{[u]}p_{[v_i]} \mid \text{ for all vertices } u, v \text{ in stage } c,\ i=1,\ldots, k \rangle.
\end{equation}
The ideal of model invariants is the ideal $I_\T=\sum_c I_c$. This ideal is contained in $\kerf$ and is easier to study than the staged tree's prime ideal because it has a concrete generating set. It is related to the model we are interested in by the following result.
\begin{thm}\cite[Theorem~3, Corollary~8]{DG} \label{lm:modelinvariantprime}
The staged tree model $\mathcal{M}_\T$ is equal to the variety of $I_{\T}$ intersected with the probability simplex $\Delta^{\circ}_{n-1}$. Moreover,  $\kerf$ is a minimal prime for $I_{\cT}$. \end{thm}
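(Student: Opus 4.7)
The plan is to handle the two assertions in turn, reducing the second to the first. For the set-theoretic identity $\mathcal{M}_\T = V(I_\T) \cap \Delta^\circ_{n-1}$, the forward inclusion is by direct substitution: if $p$ arises from admissible labels via \cref{eq:parametrisation}, then $p_{[w]}$ equals the product of the labels along the root-to-$w$ path, and each generator $p_{[u_i]}p_{[v]} - p_{[u]}p_{[v_i]}$ of $I_\T$ vanishes on $p$ because $u\sim v$ means their $i$-th outgoing labels coincide, so the two monomials in $\theta$'s are identical.

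For the reverse inclusion, I would take $p \in V(I_\T) \cap \Delta^\circ_{n-1}$ and reconstruct admissible edge labels. Since every $p_r$ is strictly positive, so is every atomic sum $p_{[u]}$, and one can set $\theta(u,u_i):= p_{[u_i]}/p_{[u]}$ for each edge. The local sum-to-one condition is automatic from the disjoint-union $\sum_i p_{[u_i]} = p_{[u]}$, and the requirement that vertices in a common stage carry the same outgoing labels is precisely the vanishing of the generators of $I_\T$: for $u\sim v$, rearranging $p_{[u_i]}p_{[v]}=p_{[u]}p_{[v_i]}$ gives $\theta(u,u_i)=\theta(v,v_i)$. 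Finally, the product $\prod_{e\in E(\lambda_r)}\theta(e)$ telescopes along $\lambda_r$ to $p_{[\text{leaf}_r]}/p_{[\text{root}]} = p_r/1 = p_r$, placing $p \in \mathcal{M}_\T$.

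For the minimal-prime claim, first note that $\ker \varphi_\T$ is prime because the codomain $\R[\Theta]/\sumt$ is a polynomial ring (one eliminates exactly one label per stage) and in particular a domain; the inclusion $I_\T \subseteq \ker\varphi_\T$ follows from the forward direction above applied at the level of parameters. Minimality is equivalent to $V(\ker\varphi_\T)$ being an irreducible component of $V(I_\T)$. Suppose $V(\ker\varphi_\T) \subseteq W \subseteq V(I_\T)$ with $W$ irreducible. Intersecting with $\Delta^\circ_{n-1}$ and using Part~1, $W \cap \Delta^\circ_{n-1} \subseteq V(I_\T)\cap\Delta^\circ_{n-1} = \mathcal{M}_\T \subseteq V(\ker\varphi_\T)$. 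Because $\mathcal{M}_\T$ is the image of a polynomial parametrisation on a Euclidean-open subset of parameter space, it is Zariski dense in $V(\ker\varphi_\T)$; conversely $W \cap \Delta^\circ_{n-1}$ is a nonempty Euclidean-open slice of the irreducible real variety $W$, and hence is Zariski dense in $W$. Combining these forces $W \subseteq V(\ker\varphi_\T)$, so $W = V(\ker\varphi_\T)$, which proves minimality.

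The main obstacle is the final density step: bridging equality on the open probability simplex with equality of (complex) Zariski closures. This needs both that $\mathcal{M}_\T$ is Zariski dense in $V(\ker\varphi_\T)$ and that a Euclidean-open real slice of an irreducible component of $V(I_\T)$ is Zariski dense in that component. Both facts are available because $I_\T$, $\ker\varphi_\T$, and the parametrisation are all defined over $\R$, so one may restrict attention to real loci; the delicate point is to formulate this so that the argument applies to every minimal prime over $I_\T$, real or complex, and thereby rules out a strictly larger irreducible subvariety sitting between $V(\ker\varphi_\T)$ and $V(I_\T)$.
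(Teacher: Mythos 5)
This theorem is cited from~\cite{DG} and not re-proved in the paper, so there is no in-house proof to compare against; I will assess your argument on its own merits. Your treatment of the set-theoretic identity $\mathcal{M}_\T = V(I_\T)\cap\Delta^\circ_{n-1}$ is sound: positivity lets you reconstruct $\theta(u,u_i)=p_{[u_i]}/p_{[u]}$, the $I_\T$-relations make the reconstructed labels well-defined on stages, and the telescoping along $\lambda_r$ recovers $p_r$. The primeness of $\kerf$ and the inclusion $I_\T\subseteq\kerf$ are also fine.

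The minimality step has a genuine gap, and you have partly misdiagnosed where it lies. Following Remark~\ref{rmk:false_leaves} the paper uses $\kerf$ to mean $\ker{\varphi_\T}_{\text{hom}}$, and this choice is forced: both $I_\T$ (quadratic generators) and ${\varphi_\T}_{\text{hom}}$ (degree-$d$ monomial images) produce \emph{homogeneous} ideals. With the unhomogenised kernel the claim is already false on dimensional grounds --- in the coin-flip example the unhomogenised kernel has codimension $2$ while $I_\T$ is principal, so Krull's principal ideal theorem rules out that kernel being a minimal prime. Once you work with the correct homogeneous $\kerf$, its variety is a cone through the origin, while $\mathcal{M}_\T$ lies inside the affine hyperplane $H=\{p_1+\dots+p_n=1\}$. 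Consequently your assertion that ``$\mathcal{M}_\T$ is Zariski dense in $V(\kerf)$'' is false: $\overline{\mathcal{M}_\T}\subseteq V(\kerf)\cap H$, which has strictly smaller dimension than $V(\kerf)$. Likewise $\Delta^\circ_{n-1}$ is open in $H$, not in $\R^n$, so ``$W\cap\Delta^\circ_{n-1}$ is a nonempty Euclidean-open slice of $W$, and hence Zariski dense in $W$'' fails whenever $W\not\subseteq H$ --- which is always the case for a cone $W\supseteq V(\kerf)$ that is not the origin. Your density sandwich therefore collapses at both ends. A correct argument along similar lines would have to descend to the hyperplane section: compare $W\cap H$ with $V(\kerf)\cap H$ (noting $\dim(W\cap H)=\dim W-1$ since $W$ is a cone and $H$ misses the vertex), use Part~1 to conclude these sections agree on the open simplex, and then convert that into a dimension bound $\dim W\le\dim V(\kerf)$ before invoking irreducibility. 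As written, the core step is not established.
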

In order to discover whether a staged tree model has toric structure, we now search for binomial ideals between the ideal of model invariants $\It$ and $\kerf$.
The following is a key result in this endeavour, searching for linear transformations of variables that provide binomial generators. 

\begin{thm} \label{thm:toricJ}
Let $J$ be an ideal in $\R[p]$ such that $I_\T \subseteq J\subseteq \ker\varphi_\T$.  Let $\ell_1,\dots, \ell_n$ be  linear forms in $\R[p]$ such that 
\begin{enumerate}[i.]
\item $\ell_1,\dots, \ell_n$ are linearly independent, 
    \item each $\varphi_\T(\ell_i)$ can be represented by a monomial in $\R[\Theta,z]/\langle\theta-z \rangle$, and
    \item the ideal $J$ is generated by binomials in the new variables $\ell_1,\dots, \ell_n$.
\end{enumerate}
Then $\ker\varphi_\T$ is a toric ideal in the new variables $\ell_1,\dots, \ell_n$.
\end{thm}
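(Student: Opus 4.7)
My plan is to show that $\ker\varphi_\T$ becomes binomial after the specified linear change of variables, which by the paper's definition at the end of \cref{sec:toric_ideals_algebras} is precisely toricity. Condition~(i) lets us identify $\R[p]=\R[\ell_1,\dots,\ell_n]$ via the automorphism $p_i\mapsto\ell_i$, and condition~(iii) makes $J$ a binomial ideal in these new variables. The task then is to upgrade this binomial structure from $J$ to $\ker\varphi_\T$.

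The key observation is that $\ker\varphi_\T$ is a minimal prime over $J$: by \cref{lm:modelinvariantprime} it is a minimal prime over $I_\T$, so since $I_\T\subseteq J\subseteq\ker\varphi_\T$, any prime strictly between $J$ and $\ker\varphi_\T$ would also lie strictly between $I_\T$ and $\ker\varphi_\T$, contradicting that minimality. I then invoke the Eisenbud--Sturmfels theorem stating that over an algebraically closed field of characteristic zero, minimal primes of binomial ideals are binomial. Since the paper's theory is valid over any characteristic-zero field, extending scalars to $\C$ is harmless; we conclude that $(\ker\varphi_\T)\otimes_\R\C$ is binomial in $\C[\ell_1,\dots,\ell_n]$, and a Galois-descent argument using stability of $\ker\varphi_\T$ under complex conjugation produces binomial generators with real coefficients. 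Explicitly, pairing a generator $\ell^a-c\ell^b$ with its conjugate $\ell^a-\bar c\ell^b$ either gives a real binomial directly (when $c\in\R$) or forces the monomials $\ell^a,\ell^b$ individually into the ideal, yielding degenerate real binomial generators.

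The main obstacle in this route is the appeal to Eisenbud--Sturmfels together with the descent from $\C$ back to $\R$; both steps are standard but non-trivial. Condition~(ii) does not enter this argument explicitly, but it makes the resulting toric structure concrete: lifting each $\varphi_\T(\ell_i)$ to a monomial $\hat m_i\in\R[\Theta,z]$ realises $\ker\varphi_\T$ as the kernel of the explicit monomial parametrisation $\ell_i\mapsto\hat m_i$. An alternative proof would be to verify directly that $J$ is contained in the kernel of this monomial map, so that minimality of $\ker\varphi_\T$ over $J$ forces equality; the technical gap there is to show $\R[\hat m_1,\dots,\hat m_n]\cap\langle\theta-z\rangle=0$, which is exactly the condition that $\imf$ is a monomial algebra as in \cref{prop:monomial_algebra}.
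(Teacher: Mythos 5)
Your proof takes a genuinely different route from the paper, but it has a gap that stems directly from the step where you set aside condition~(ii).

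The paper's proof is constructive and elementary: using condition~(ii), it defines a monomial map $\psi\colon\R[p]\to\R[\Theta,z]$ by $\psi(\ell_i)=m_i$, so that $\ker\psi$ is automatically a toric prime. It then proves $J\subseteq\ker\psi$ by showing that any binomial $m_{i_1}\cdots m_{i_s}-a\,m_{j_1}\cdots m_{j_s}$ lying in $\langle\theta-z\rangle$ must be the zero polynomial --- a short geometric argument exploiting that $\langle\theta-z\rangle$ is a prime generated by long linear forms and therefore contains no nonzero scaled difference of coprime monomials. Minimality of $\ker\varphi_\T$ over $J$ then forces $\ker\psi=\ker\varphi_\T$. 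Your route instead passes through Eisenbud--Sturmfels and a Galois descent, which is heavier machinery; but the real issue is what it delivers.

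Your argument, even granting Eisenbud--Sturmfels over $\C$, the going-up/incomparability step needed to see that $\ker\varphi_\T\otimes\C$ is a minimal prime of $J\otimes\C$, and the conjugation argument ruling out monomial generators, concludes only that $\ker\varphi_\T$ is generated by binomials $\ell^a-c\,\ell^b$ with $c\in\R^\times$. That is a binomial ideal, but not yet a \emph{toric} ideal in the paper's sense, which asks for the kernel of a monomial map, i.e.\ generators $\ell^a-\ell^b$ with coefficient one. Condition~(ii) is exactly what rules out nontrivial coefficients: if $\ell^a-c\,\ell^b\in\ker\varphi_\T$ then $m^a-c\,m^b\in\langle\theta-z\rangle$, and it is precisely the paper's point-wise evaluation argument that shows this forces $m^a=m^b$ and $c=1$. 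Without condition~(ii) the statement is simply false --- take $\ell_1,\ell_2$ with $\varphi_\T(\ell_1)=2\theta$ and $\varphi_\T(\ell_2)=\theta$; then $\ell_1-2\ell_2$ is a binomial relation that no monomial parametrisation in the $\ell$'s can produce. So condition~(ii) is not merely ``making the toric structure concrete'': it is load-bearing, and your proof needs to invoke it (together with the fact that $\langle\theta-z\rangle$ contains no nonzero scaled binomial with coprime terms) before the conclusion follows. Once that step is added your argument does close, but at that point it contains the paper's argument as a subroutine, and the detour through $\C$ and Eisenbud--Sturmfels becomes unnecessary. A smaller remark on your ``alternative'' sketch: proving $J\subseteq\ker\psi$ does not require the full statement $\R[\hat m_1,\ldots,\hat m_n]\cap\langle\theta-z\rangle=0$ --- the paper only checks it for the specific scaled binomials coming from generators of $J$, and lets minimality do the rest.
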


\begin{proof}
For each $i=1, \ldots, n$ let $m_i$ be a monomial in $\R[\Theta,z]$ such that $\varphi(\ell_i)$ can be represented by $m_i$ in $\R[\Theta,z]/\langle \theta -z \rangle$. We define the monomial map $\psi:\R[p] \to \R[\Theta,z]$ by $\psi(\ell_i)=m_i$. Then  clearly $\ker \psi \subseteq \ker \varphi$, and we shall prove that $J \subseteq \ker \psi$. 
Take the binomial $\ell_{i_1}\! \cdots \ell_{i_s}-a\ell_{j_1}\! \cdots \ell_{j_s}$ in $J$. As $J \subseteq \ker \varphi$ the binomial $f= m_{i_1} \cdots m_{i_s}-am_{j_1} \cdots m_{j_s} $ is in $ \langle \theta -z \rangle$.  We want to prove that $f$ is in fact the zero polynomial. Let $g =f/\gcd(m_{i_1} \cdots m_{i_s},m_{j_1} \cdots m_{j_s})$. Since $\langle \theta -z \rangle$ is generated by linear forms it is a prime ideal, and it does not contain any monomials. It follows that $g$ is in $\langle \theta -z \rangle$. It remains to prove that $g=0$. If $g$ is not the zero polynomial, then one of its terms is divisible by a factor, say $\theta_1$. We can find  a point in the variety of $\langle \theta -z \rangle$ where $\theta_1$ is $0$ and all other coordinates are non-zero. But $g$ evaluated in this point will not be zero, which contradicts $g$ being in $\langle \theta -z \rangle$. Hence the only option is that $g$ is the zero polynomial. We have now concluded that $J \subseteq \ker \psi \subseteq \ker \varphi$. Since $ \ker \varphi$ is a minimal prime of $J$ and $\ker \psi$ is a prime ideal, we must have $\ker \psi = \ker \varphi$. If one uses $\ell_1, \ldots, \ell_n$ as variables, $\ker \psi$ is a toric ideal.
\end{proof}

In what remains of this section we discuss the \emph{ideal of minors} for a staged tree model $\T$ as a candidate for $J$ in Theorem \ref{thm:toricJ}. Let $c$ be a stage and let $k$ be the number of children for a vertex of this stage. We define the \emph{stage matrix} $M_c$ to be the matrix with columns $[p_{[u_1]},\dots, p_{[u_{k}]}]^{\top}$ for each vertex $u$ in the stage $c$. Let $J_c$ be the determinantal ideal generated by all $2 \times 2$-minors  of $M_c$. The generators of $J_c$ has the form $p_{[u_i]}p_{[v_j]}-p_{[u_j]}p_{[v_i]}$. The ideal of minors $J_\T$ for the staged tree $\T$ is the sum of ideals $J_c$ for all stages $c$ of $\T$. 
Authors in \cite{DG} refer this ideal as the  \emph{ideal of paths}. They show in  \cite[Lemma~9]{DG} that $J_\T$ is included in $\ker\varphi_\T$ and contains the ideal of model invariants. These results combined with \Cref{lm:modelinvariantprime} give the following.
\begin{lem}
For a staged tree $\T$, we have the chain of inclusions $I_\T\subseteq J_\T\subseteq \ker\varphi_\T$, and the prime ideal  $\kerf$ is a minimal prime for the ideal of minors $J_\T$.
\end{lem}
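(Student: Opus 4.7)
The plan is to assemble the statement directly from two results already in hand: the chain of inclusions is exactly \cite[Lemma~9]{DG}, so the only work is the minimality assertion. I would spend one short paragraph invoking this citation to get $I_\T\subseteq J_\T\subseteq \ker\varphi_\T$, then derive the minimal prime claim for $J_\T$ from the corresponding statement for $I_\T$ provided by \Cref{lm:modelinvariantprime}.

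For the minimality, I would argue as follows. Since $J_\T\subseteq \kerf$ and $\kerf$ is prime, $\kerf$ contains at least one minimal prime $P$ of $J_\T$, so $J_\T \subseteq P \subseteq \kerf$. Using the left inclusion $I_\T\subseteq J_\T$ we then obtain $I_\T\subseteq P \subseteq \kerf$. But $P$ is prime and $\kerf$ is a \emph{minimal} prime of $I_\T$ by \Cref{lm:modelinvariantprime}, so the inclusion $P\subseteq \kerf$ between primes containing $I_\T$ forces $P=\kerf$. This shows $\kerf$ is itself a minimal prime of $J_\T$.

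There is essentially no obstacle here: the argument is the standard sandwich lemma that a prime minimal over a smaller ideal is automatically minimal over any intermediate ideal below it. The only point to be a little careful about is invoking the correct direction, namely that every prime containing an ideal contains a \emph{minimal} prime of that ideal (which holds because the set of primes containing a given ideal in a Noetherian ring has minimal elements, and $\R[p]$ is Noetherian). No computation or combinatorial input beyond the two cited results is needed.
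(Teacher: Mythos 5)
Your argument is correct and is precisely the sandwich lemma that the paper leaves implicit when it writes ``These results combined with Theorem~\ref{lm:modelinvariantprime} give the following'': cite \cite[Lemma~9]{DG} for $I_\T\subseteq J_\T\subseteq\kerf$, then use that a prime $P$ with $J_\T\subseteq P\subseteq\kerf$ also satisfies $I_\T\subseteq P\subseteq\kerf$, so minimality of $\kerf$ over $I_\T$ forces $P=\kerf$. You have simply made explicit the same two-step deduction the paper intends, so there is nothing to add or correct.
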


The fact that $J_\T$ is a determinantal ideal allows us to choose from many different generating sets via elementary row and column operations. 

\begin{lem}
\label{lm:elementaryoperations}
Elementary row and column operations on a matrix do not change its associated determinantal ideals. 
\end{lem}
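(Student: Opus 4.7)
The plan is to verify the invariance separately for each of the three standard elementary row operations---swapping two rows, scaling a row by a nonzero constant, and adding a scalar multiple of one row to another---and then invoke symmetry via transpose for the column operations. Fix $k \geq 1$ and let $I_k(M)$ denote the ideal generated by the $k \times k$ minors of $M$; I want to show $I_k(M) = I_k(M')$ whenever $M'$ is obtained from $M$ by a single such operation. Since each elementary operation has an elementary operation as its inverse (a swap is self-inverse, scaling by $\lambda$ is inverted by scaling by $\lambda^{-1}$, and adding $\alpha \cdot \text{row}_i$ to $\text{row}_j$ is inverted by adding $-\alpha \cdot \text{row}_i$), it suffices to prove the one-sided inclusion $I_k(M') \subseteq I_k(M)$.

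The first two cases are essentially bookkeeping. Under a row swap, every $k \times k$ minor of $M'$ equals $\pm$ the corresponding $k \times k$ minor of $M$, so the generating sets agree up to sign. Under scaling a row by a nonzero $\lambda$, each $k \times k$ minor of $M'$ involving that row is $\lambda$ times the corresponding minor of $M$, and any minor not involving that row is unchanged; since $\lambda$ is a unit, the generating sets again agree up to units.

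The substantive case is adding $\alpha$ times row $i$ to row $j$, and the key tool is multilinearity of the determinant in its rows. Consider a $k \times k$ minor $\mu'$ of $M'$, selecting some $k$ row indices. If row $j$ is not among them, then $\mu'$ equals the corresponding minor of $M$. If both rows $i$ and $j$ are selected, multilinearity expands $\mu'$ as the corresponding minor of $M$ plus $\alpha$ times a $k \times k$ determinant in which the two selected rows originating from $i$ and $j$ are equal, and hence the extra term vanishes. If row $j$ is selected but row $i$ is not, multilinearity gives $\mu' = \mu + \alpha\,\mu''$, where $\mu$ is the corresponding minor of $M$ and $\mu''$ is the $k \times k$ minor of $M$ obtained by replacing the selected row $j$ by row $i$. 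In every case $\mu' \in I_k(M)$, which proves the inclusion.

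I do not anticipate a genuine obstacle here, as the argument is standard multilinear algebra. The only point requiring care is ensuring that in the third subcase above $\mu''$ is a bona fide $k \times k$ minor of $M$, i.e.\ that the new list of row indices still has $k$ distinct entries; this is precisely guaranteed by the assumption that row $i$ was not among the originally chosen rows.
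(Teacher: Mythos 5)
Your proof is correct and rests on the same underlying tool as the paper's---multilinearity of the determinant in its rows---but it is organised a bit differently. You decompose into the three standard elementary operations (swap, scale, add-a-multiple) and invoke the fact that each has an elementary inverse to reduce to proving a single one-sided inclusion; the paper instead packages scaling and row-addition into one step by replacing a row with an arbitrary linear combination $a_1 m_1 + \dots + a_n m_n$ subject to $a_1 \neq 0$, which keeps the operation reversible for the same reason. Both are fine; your version is slightly more explicit on two points the paper handles implicitly or by fiat: you spell out that when both affected rows are selected the extra term is a determinant with a repeated row and hence vanishes (in the paper this is the $i=j$ term of the sum), and you carry out the argument for general $k\times k$ minors rather than doing the $2\times 2$ case and declaring the rest analogous. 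Neither approach needs to treat swaps beyond noting that they permute the minors up to sign.
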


This lemma is a well known property of determinantal ideals. For completeness we include a proof for the case of $2 \times 2$-minors. The proof for $t\times t$ minors with $t>2$ is analogous. 

\begin{proof}
Denote by $m_{ij}$ the $(i,j)$-entry of a matrix $M$ with entries from some polynomial ring $R$, and by $M_{\{i,j\}\times\{k,l\}}$ the $2\times2$ minor of $M$ that uses rows $i<j$, and columns $k<l$. 
It is enough to prove that the determinantal ideal does not change when one substitutes row $m_1$ by a linear combination of all the rows $a_1m_1+\dots +a_nm_n$ with $a_1\neq 0$. Call the second matrix $M'$. All the $2\times 2$-minors not involving the first row in $M'$ are the same as the respective ones in $M$, so $M'_{\{i,j\}\times\{k,l\}}=M_{\{i,j\}\times\{k,l\}}$ for $i\neq 1$. The minors that involve the first row are linear combination of minors in $M$: 
\begin{equation}
M'_{\{1,j\}\times\{k,l\}}=\det \left(\begin{bmatrix}a_1m_{1k}+\cdots + a_nm_{nk} & a_1m_{1l}+\cdots +a_nm_{nl} \\ m_{jk} & m_{jl}\end{bmatrix}\right) =\sum\limits_{i=1}^na_i\det \left( \begin{bmatrix} m_{ik} & m_{il}\\ m_{jk} & m_{jl}
\end{bmatrix}\right)=0.
\end{equation}
Since all but the first minor in the sum are already minors in $M'$ up to possibly a change of sign, minors $M'_{\{1,j\}\times\{k,l\}}$ can substitute $M_{\{1,j\}\times\{k,l\}}$ as generators in the determinantal ideal of $M$.
\end{proof}

The main idea is to apply appropriate row and column operations on each stage matrix $M_c$ that produce a total of exactly $n$ distinct entries in the final matrices. These entries will be the new variables $\ell_1,\ldots,\ell_n$ in \cref{thm:toricJ}. 

\begin{ex}\label{ex:minors}
Consider the tree in \cref{fig:minors}. This tree is not balanced. The ideal of minors $J_\T$ is generated by the $2\times 2$ minors of stage matrices 
\begin{equation}
M_{\color{Green}\bullet}=\begin{bmatrix}
p_1 & p_{10}\\
p_2 & p_{11}
\end{bmatrix}
\quad\text{and}\quad
M_{\color{magenta}\bullet}=\begin{bmatrix}
p_3 & p_6& p_1+p_2& p_1+\cdots+ p_8\\
p_4 & p_7 & p_3+p_4+p_5 & p_9\\
p_5 &p_8 &p_6+p_7+p_8 & p_{10}+p_{11}
\end{bmatrix}.
\end{equation}
Substitute the second row of $M_{\color{Green}\bullet}$ by the sum of the two rows. In $M_{\color{magenta}\bullet}$ we first substitute the second row by the sum of its three rows, and then we substitute its new first column by the sum of its negative, the negative of the second column, and the third column. This results in the two matrices
\begin{equation}
M'_{\color{Green}\bullet}=\begin{bmatrix}
p_1 & p_{10}\\
p_1+p_2 &p_{10}+p_{11}
\end{bmatrix}
\ \text{and} \
M'_{\color{magenta}\bullet}=\begin{bmatrix}
p_1+p_2-p_3-p_6 & p_6& p_1+p_2& p_1+\cdots+ p_8\\
p_1+p_2 & p_6+p_7+p_8 & p_1+\cdots+ p_8 & p_1+\cdots+ p_{11}\\
-p_5+p_6+p_7 &p_8 &p_6+p_7+p_8 & p_{10}+p_{11}
\end{bmatrix}.
\end{equation}
Take $\ell_1=p_1$, $\ell_2=p_1+p_2$, $\ell_3=p_1+p_2-p_3-p_6$, $\ell_4=-p_5+p_6+p_7$, $\ell_6=p_6+p_7+p_8$, $\ell_7=p_8$, $\ell_8=p_1+\cdots+p_8$, $\ell_9=p_{10}$, $\ell_{10}=p_{10}+p_{11}$ and $\ell_{11}=p_{1}+\cdots+p_{11}$. One can check that $\ell_1,\dots,\ell_{11}$ are linearly independent. The ideal of minors is a binomial ideal in the new variables. Images of $\varphi_\T(\ell_i)$ are represented by monomials in $\R[\Theta,z]\slash \langle \theta-z\rangle$. For instance $\varphi_\T(\ell_1)=\theta_1^2\tau_1$, $\varphi_\T(\ell_2)=\theta_1^2z$, $\varphi_\T(\ell_3)=\theta_1^3$, and $\varphi_\T(\ell_{11})=z^3$. By \Cref{thm:toricJ}, the prime ideal of the staged tree $\T$ is toric in the new variables $\ell_1,\dots,\ell_{11}$.
\end{ex}


%

In the following section we apply \cref{thm:toricJ} to a concrete class of staged tree models with a nice combinatorial description.

\begin{figure}
\centering
\begin{minipage}[b]{0.45\linewidth}
\centering
\includegraphics{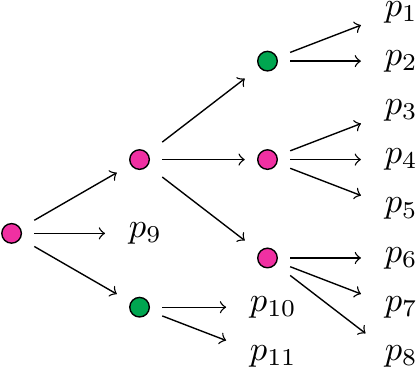}
\caption{A non-balanced staged tree which has a toric structure by Theorem \ref{thm:toricJ}.}\label{fig:minors}
\end{minipage}
\qquad
\begin{minipage}[b]{0.45\linewidth}
\centering
\includegraphics{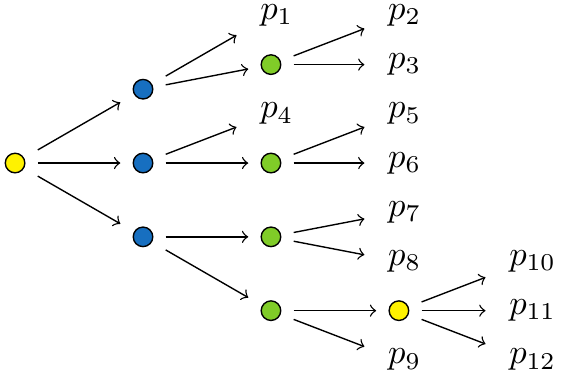}
\caption{An \sip-tree.\label{fig:siptree}}
\end{minipage}%
\end{figure}

\section{Extending the class of toric staged tree models}\label{sec:extend}


In this section we extend the class of staged tree models for which the ideal $\ker \varphi_{\T}$ is toric by allowing a linear change of variables. 

The trees we will study here are trees where (a subset of) their stages have the following property concerning induced subtrees. We say that a stage $c$ has the \emph{subtree-inclusion property}, for short \sip, if there is an index $i_c$ such that for every vertex $v$ of the stage $c$ each induced subtree $\T(v_i)$ contains a subtree identical to $\T(v_{i_c})$ with root $v_i$. A staged tree is henceforth called a \sip-tree if all of its stages satisfy the subtree-inclusion property.

An example of an \sip-tree is a staged tree where there is an $i$ such that the $i$-th child of each vertex is a leaf. In particular the tree in \cref{fig:biasedcoin} is \sip. The tree in \cref{fig:siptree} is SIP with $i_{\color{Yellow}\bullet}=1$ or $2$, $i_{\color{RoyalBlue}\bullet}=1$ and $i_{\color{LimeGreen}\bullet}=2$. 
If we have a \sip-tree, we can always redraw it so that $i_c=1$ for each stage $c$. Throughout this section we will assume all \sip-trees to be presented in this way.

Consider an \sip-tree $\T$. Our goal is to prove that the ideal $\ker \varphi_{\T}$ is toric after a linear change of variables. In order to do this we will introduce a stratified version of $\T$ which we denote $\T_{\str}$. The staged tree $\T_{\str}$ is identical to $\T$ as a tree, not considering edge-labels and colours of vertices. We let two vertices in $\T_{\str}$ be in the same stage if they lie on the same distance from the root, and are in the same stage in $\T$. The edges of $\T_{\str}$ are labelled accordingly. The stratified version $\T_{\str}$ of a SIP-tree is also SIP as the subtrees that are required to be identical in $\T$ always sit at the same distance from the root. The benefit of going via $\T_{\str}$ to study $\T$ is that there are no linear relations in the prime ideal of $\T_{\str}$. 
%
If $\Theta_{\str}$ denotes the set of labels in the tree $\T_{\str}$, and $\langle \theta-z \rangle_{\str} \subset \R[\Theta_{\str},z]$ the ideal of homogenised sum-to-one conditions, we have a canonical map $F: \R[\Theta_{\str},z]/\langle \theta-z \rangle_{\str} \to \R[\Theta,z]/\langle \theta-z \rangle$. In other words, we define $F$ so that the composition $\varphi_\T = F \circ \varphi_{\T_{\str}}$ holds. 

The key to proving that $\ker \varphi_{\T}$ is toric is to apply \cref{thm:toricJ} with $J$ being the ideal of minors $J_\T$. To find a choice of variables for which $J_\T$ is binomial we apply row and column operations to the matrices $M_c$, following Lemma \ref{lm:elementaryoperations}. The first step is to translate the SIP into a statement on the entries of the stage matrices $M_c$. 

\begin{lem}\label{lem:SIP_sub}
Let $\T$ be an SIP-tree, and let $\theta_1, \ldots, \theta_k$ be labels corresponding to a stage in the stratified tree $\T_{\str}$. Let $v\in \T_{\str}$ be a vertex such that the root-to-$v$ path goes through an edge labelled $\theta_1$. Then for each $i=1, \ldots, k$ there is a vertex $u$ such that $\frac{\theta_i}{\theta_1}\varphi_{\T_{\str}}(p_{[v]})=\varphi_{\T_{\str}}(p_{[u]})$. Moreover, if $v$ is not a leaf then $u$ and $v$ are in the same stage in $\T_{\str}$.   
\end{lem}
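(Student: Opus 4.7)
The plan is to exhibit the witnessing vertex $u$ by using the SIP structure on the stratified tree $\T_{\str}$. Let $w$ be the unique vertex in $\T_{\str}$ whose outgoing $\theta_1$-edge sits on the root-to-$v$ path; then $w$ belongs to the stage with labels $\theta_1,\ldots,\theta_k$, and $v$ lies in the induced subtree $\T_{\str}(w_1)$. Since $\T_{\str}$ inherits the SIP with $i_c=1$ from $\T$, for each index $i$ there is a subtree $S_i\subseteq\T_{\str}(w_i)$ rooted at $w_i$ which is identical, as a labelled staged tree, to $\T_{\str}(w_1)$. Write $\iota_i\colon \T_{\str}(w_1)\to S_i$ for the corresponding label-preserving isomorphism with $\iota_i(w_1)=w_i$, and set $u:=\iota_i(v)$.

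Under $\iota_i$ the $w_1$-to-$v$ path is carried to the $w_i$-to-$u$ path with an identical sequence of edge labels. Letting $m_w$ denote the product of labels along the root-to-$w$ path and $\alpha$ the common label product along these corresponding paths, one obtains
\[
\varphi_{\T_{\str}}(p_{[v]})=m_w\,\theta_1\,\alpha
\qquad\text{and}\qquad
\varphi_{\T_{\str}}(p_{[u]})=m_w\,\theta_i\,\alpha,
\]
from which $\tfrac{\theta_i}{\theta_1}\varphi_{\T_{\str}}(p_{[v]})=\varphi_{\T_{\str}}(p_{[u]})$ is immediate. The two boundary cases $v=w_1$ (giving $u=w_i$, $\alpha=1$) and $v$ a deeper descendant are both covered by this same identification.

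For the moreover statement, suppose $v$ is not a leaf, so that $v$ has outgoing edges in $\T_{\str}$ labelled by the entire label set of $v$'s stage. Through $\iota_i$, those labels coincide with outgoing labels of $u$ inside $S_i$, and hence occur among the outgoing labels of $u$ in the full tree $\T_{\str}$. Because distinct stages of the staged tree $\T_{\str}$ are required to carry disjoint label sets, sharing even a single outgoing label forces $u$ to lie in the same stage as $v$. The main subtlety I anticipate is pinning down precisely what ``identical subtree'' in the SIP means at each vertex so that $\iota_i$ really is label-preserving on all corresponding edges and the identification of paths is unambiguous; once that combinatorial setup is in place, the rest is a routine monomial bookkeeping along matched root-to-vertex paths.
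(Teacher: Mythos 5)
Your proof is correct and takes essentially the same route as the paper's: identify the vertex $w$ where the root-to-$v$ path takes the $\theta_1$-edge, use the SIP on $\T_{\str}$ to copy the $w_1$-to-$v$ path into a $w_i$-to-$u$ path with matching labels, then compare the monomials and (for the stage claim) the outgoing labels at $u$ and $v$. You make the label-preserving isomorphism $\iota_i$ and the monomial bookkeeping explicit, but the underlying argument is the one in the paper.
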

\begin{proof}
Let $w$ be the vertex where the path to $v$ takes the $\theta_1$-edge. Then $v$ belongs to the subtree $\T_{\str}(w_1)$. Since $\T$ is \sip, each induced subtree $\T_{\str}(w_i)$ contains a subtree identical to $\T_{\str}(w_1)$ with root $w_i$. So we can find a vertex $u$ in $\T_{\str}(w_i)$ where the path from $w_i$ to $u$ copies the steps of the path from $w_1$ to $v$. Then $\frac{\theta_i}{\theta_1}\varphi_{\T_{\str}}(p_{[v]})=\varphi_{\T_{\str}}(p_{[u]})$, and if $v$ is not a leaf then $u$ and $v$ are in the same stage. 
\end{proof}

The first outgoing edges of vertices in our SIP-tree $\T$ will play an important role in the proof of \cref{thm:SIP_toric}. For short we will refer to such edges as \emph{first-edges}. The next step is to prove that the linear forms that we will use as a new basis for $\R[p]$ are indeed $n$ linearly independent forms.  

\begin{lem}\label{lem:SIP_lin_indep}
Let $L$ be the set of monomials in $\R[\Theta_{\str},z]/\langle \theta-z \rangle_{\str}$ given by the root-to-leaf paths in $\T_{\str}$ and substituting every label of a first-edge by the variable $z$. Then $L$ lies in the image of the map $\varphi_{\T_{\str}}$, and the preimage of $L$ is a set of $n$ linearly independent linear forms.   
\end{lem}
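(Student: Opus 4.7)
For each root-to-leaf path $\lambda_r$ in $\T_{\str}$ I plan to construct an explicit preimage $\ell_r \in \R[p]$ with $\varphi_{\T_{\str}}(\ell_r) = m_r$, and then verify that $\ell_1, \ldots, \ell_n$ are linearly independent by a triangular argument ordered by the number of first-edges.

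The construction proceeds by induction. For a path $\lambda_r$ and a subset $S \subseteq \mathrm{FE}(r)$ of its first-edge positions, let $m_r(S)$ be the monomial along $\lambda_r$ with the labels at positions in $S$ replaced by $z$, so that $m_r(\mathrm{FE}(r)) = m_r$. When $S = \emptyset$ one has $m_r(\emptyset) = \varphi_{\T_{\str}}(p_r)$. For the inductive step on $|S|$, pick any $d^* \in S$; the stage at that depth has labels $\theta_1, \ldots, \theta_k$, and expanding via the sum-to-one relation $z = \theta_1 + \cdots + \theta_k$ gives $m_r(S) = \sum_{i=1}^k m_r^{(i)}(S)$. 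For $i=1$ this is simply $m_r(S \setminus \{d^*\})$, since the original first-edge label is $\theta_1$; for $i \geq 2$, the subtree-inclusion property together with \cref{lem:SIP_sub} yields a root-to-leaf path $\lambda_{r^{(i)}}$ agreeing with $\lambda_r$ outside $d^*$ and taking the $i$-th edge at $d^*$, and one checks $m_r^{(i)}(S) = m_{r^{(i)}}(S \setminus \{d^*\})$. By induction each summand has a linear-form preimage, and summing gives a preimage of $m_r(S)$. Setting $S = \mathrm{FE}(r)$ yields $\ell_r$.

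Unrolling this recursion produces the explicit formula $\ell_r = \sum_{s \in S_r} p_s$, where $S_r$ is the set of root-to-leaf paths agreeing with $\lambda_r$ at every non-first-edge position of $\lambda_r$; in particular $r \in S_r$, and for $s \in S_r \setminus \{r\}$ one has $|\mathrm{FE}(s)| < |\mathrm{FE}(r)|$, since $s$ cannot take a first-edge at any position outside $\mathrm{FE}(r)$ and must replace at least one first-edge of $\lambda_r$ by a non-first-edge.

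This strict decrease in $|\mathrm{FE}|$ drives the linear-independence proof. Set $W_j := \spn\{\ell_r : |\mathrm{FE}(r)| \leq j\}$ and show by induction on $j$ that $W_j = \spn\{p_r : |\mathrm{FE}(r)| \leq j\}$: the inclusion $\subseteq$ is immediate from the support of $\ell_r$, while for $|\mathrm{FE}(r)| = j$ the relation $\ell_r - p_r \in W_{j-1}$ forces $p_r \in W_j$, giving the reverse inclusion by a dimension count. The forms $\ell_r$ with $|\mathrm{FE}(r)| = j$ thus reduce to distinct $p_r$'s modulo $W_{j-1}$, are linearly independent at each level, and together yield $n$ linearly independent forms. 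The step most in need of care is the recursive identification $m_r^{(i)}(S) = m_{r^{(i)}}(S \setminus \{d^*\})$ for $i \geq 2$: this is precisely where the SIP hypothesis enters, and without it there would be no guarantee that the path $\lambda_{r^{(i)}}$ exists in $\T_{\str}$ with the required downstream label sequence.
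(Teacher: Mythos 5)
Your proof is correct, and its linear-independence argument takes a genuinely more explicit route than the paper's. The paper proves that $L$ lies in the image by the same repeated application of \cref{lem:SIP_sub} that you use, but for linear independence it argues abstractly: distinct monomials in $L$ are linearly independent as monomials, and since $\T_{\str}$ is squarefree the restriction of $\varphi_{\T_{\str}}$ to $\spn(p_1,\dots,p_n)$ is injective, so the preimages form a basis. You instead unroll the recursion to produce the explicit preimages $\ell_r = \sum_{s\in S_r}p_s$, observe the strict drop $|\mathrm{FE}(s)|<|\mathrm{FE}(r)|$ for $s\in S_r\setminus\{r\}$, and run a triangular filtration on $|\mathrm{FE}|$. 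Both arguments succeed, but yours buys two things the paper leaves implicit: a concrete change-of-variables formula for the $\ell_r$ (useful in practice, and consistent with how the paper later applies the lemma in \cref{thm:SIP_toric}), and a direct demonstration that $L$ contains $n$ \emph{distinct} monomials — the paper's bijectivity claim $\varphi_{\T_{\str}}\colon\R[p]_1\to\spn L$ silently requires that $|L|=n$, which your filtration makes evident. The one step worth flagging is your identification $\mathrm{FE}(\lambda_{r^{(i)}})=\mathrm{FE}(\lambda_r)\setminus\{d^*\}$: this uses not just the existence of $\lambda_{r^{(i)}}$ from \cref{lem:SIP_sub} but the fact that the SIP copy is \emph{identical as a staged subtree}, so the stage structure (and hence which downstream edges are first-edges) is preserved. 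You do note that SIP "guarantees the required downstream label sequence," which is the right point, and it is also the point on which the paper's own argument implicitly relies.
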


\begin{proof}
Let $A$ be the set obtained from the monomials in $\R[\Theta_{\str},z]/\langle \theta-z \rangle_{\str}$ given by the root-to-leaf paths and substituting every label of a first-edge by the sum of the labels corresponding to that stage. It follows by repeated use of \cref{lem:SIP_sub} that each term in the expansion of such expression lies in the image of $\varphi_{\T_{\str}}$. On the other hand $A=L$, since the sum of the labels corresponding to a stage equals $z$. This proves that $L$ is in the image of $\varphi_{\T_{\str}}$. Moreover, the fact that the first parameter of each stage does not occur in the monomials in $\mathcal M$ makes the distinct monomials in $A$ linearly independent. Now, let's consider $\R[p]$ and $\R[\Theta_{\str},z]/\langle \theta-z \rangle_{\str}$ as $\R$-spaces. More precisely, we consider the subspaces $\R[p]_1=\spn(p_1, \ldots, p_n)$ and $\spn L$. Recall that there are no linear relations in $\ker \varphi_{\T_{\str}}$ as $\T_{\str}$ is squarefree. This is the same as saying that the restricted map $ \varphi_{\T_{\str}}:\R[p]_1 \to \spn L$ is bijective. But then the preimages of the elements of $A$ is just another choice of basis for $\R[p]_1$. 
\end{proof}

Now we are ready to prove that the all \sip-trees have toric structure. 

\begin{thm}\label{thm:SIP_toric}
If $\T$ is an \sip-tree then the associated ideal $\ker \varphi_\T$ is toric after a change of variables. The parametrisation revealing the toric structure is given by substituting the first parameter of each stage by the homogenising variable $z$ in the monomials 
\[
z^{d-d(\lambda_i)} \prod_{e \in E(\lambda_i)}\theta(e) \ \ \ \ \ \text{for }i=1, \ldots, n
\]
from \cref{eq:homphi}. Moreover, $p_1 + \dots +p_n$ is one of the new variables.
\end{thm}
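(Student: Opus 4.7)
\emph{Proof plan.} The plan is to apply \cref{thm:toricJ} with $J=J_\T$ and with the $n$ linear forms $\ell_1,\dots,\ell_n$ taken to be the preimages under $\varphi_\T$ of the monomials in the set $L$ from \cref{lem:SIP_lin_indep}. Conditions~(i) and~(ii) of \cref{thm:toricJ} are exactly what \cref{lem:SIP_lin_indep} supplies: the $\ell_r$'s are linearly independent, and each $\varphi_\T(\ell_r)$ is represented by the monomial $\mu_r\in L$ obtained from $\lambda_r$ by replacing every first-edge label with $z$. The homogenised sum-to-one conditions moreover give $\varphi_\T(p_1+\dots+p_n)=z^d$, and $z^d$ equals $\mu_r$ for the leaf at the bottom of the chain of first-edges descending from the root; so $p_1+\dots+p_n$ may be chosen as one of the $\ell_r$, yielding the last assertion of the theorem.

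The heart of the argument is condition~(iii), that $J_\T$ is generated by binomials in the $\ell$-basis. For this I would use \cref{lm:elementaryoperations} to rewrite each stage matrix $M_c$ as follows. First, apply the row operation replacing row~$1$ of $M_c$ by the sum of all of its rows; the new $(1,u)$-entry is $p_{[u]}$, and its image under $\varphi_\T$ is the former $\varphi_\T(p_{[u_1]})$ with the first label $\theta_1$ of stage~$c$ replaced by $z$. Next, for each vertex $u$ in stage~$c$ and each ancestor stage $c'$ at which the root-to-$u$ path takes the first-edge, add to column~$u$ the columns of $M_c$ indexed by the vertices $u^{(2)},\dots,u^{(k')}$ whose root-paths agree with $u$'s except at $c'$, where they use $\theta'_2,\dots,\theta'_{k'}$ in place of $\theta'_1$. \cref{lem:SIP_sub} guarantees that these companion vertices $u^{(j)}$ lie in the same stage as $u$ in $\T_{\str}$, hence in the same stage~$c$ of $\T$, so the required columns are present in $M_c$. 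In the image, each such column addition replaces the first label $\theta'_1$ of stage~$c'$ by $z$. Iterating over all ancestor first-edges on the root-to-$u$ path turns the $(i,u)$-entry of the modified matrix $\widetilde M_c$ into a linear form whose image is the monomial $\mu_{r(u_i)}$, where $r(u_i)$ is the leaf at the end of the first-edge chain descending from $u_i$; by uniqueness of linear preimages for the monomials in $L$, this entry coincides with $\ell_{r(u_i)}$. The $2\times 2$-minors of $\widetilde M_c$ are then binomials $\ell_a\ell_b-\ell_c\ell_d$, so $J_\T=\sum_c J_c$ is binomial in the new variables and \cref{thm:toricJ} yields the claimed toric structure of $\ker\varphi_\T$.

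The main obstacle is the bookkeeping of the column operations across all ancestor stages: one must verify via iterated use of \cref{lem:SIP_sub} that at every ancestor first-edge of $u$ the required companion vertices exist as columns of $M_c$, and that the successive column additions cleanly produce the single monomial $\mu_{r(u_i)}$ in the image without surplus cross-terms. A clean way to organise this is to observe that the combined column additions correspond to a triangular, Möbius-type change of basis from the entries $p_{[u_i]}$ to the $\ell_r$; since each addition modifies only a single column and can be performed using the original columns of $M_c$, the entire procedure is a valid sequence of elementary operations covered by \cref{lm:elementaryoperations}, and the identification of each modified entry with a unique $\ell_r$ follows directly.
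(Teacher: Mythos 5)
Your overall strategy is exactly the paper's: take $J=J_\T$ in \cref{thm:toricJ}, get conditions (i) and (ii) from \cref{lem:SIP_lin_indep}, and establish (iii) by elementary row/column operations on each $M_c$ that turn every entry into a linear form whose image under $\varphi_{\T_{\str}}$ is a monomial from $L$. The row operation (sum of all rows into the first), the use of $\varphi_{\T_{\str}}$ as a diagnostic, the appeal to \cref{lem:SIP_sub} to guarantee companion columns exist, and the final reading-off of $p_1+\dots+p_n$ as the preimage of $z^d$ all match.

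There is, however, a genuine gap in the column-operation step. You iterate over the ancestor first-edge stages $c', c'', \dots$ of the root-to-$u$ path, and for each one you add to the $u$-column the columns of the single-stage companions $u^{(2)},\dots,u^{(k')}$, explicitly insisting this ``can be performed using the original columns of $M_c$.'' But with original columns this produces the wrong image. After adding the $c'$-companions you indeed replace $\theta'_1$ by $z$, i.e.\ the image gets multiplied by $z/\theta'_1=1+\sum_{j\ge2}\theta'_j/\theta'_1$. But then adding the original $c''$-companion columns contributes $\sum_{j'\ge2}\theta''_{j'}/\theta''_1$ (not multiplied by $z/\theta'_1$), so after two iterations the image factor is $1+\sum_j\theta'_j/\theta'_1+\sum_{j'}\theta''_{j'}/\theta''_1$, which is not the required product $(z/\theta'_1)(z/\theta''_1)$; the cross-terms $\theta'_j\theta''_{j'}/(\theta'_1\theta''_1)$ are missing. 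Your own caveat (``without surplus cross-terms'') flags the issue but does not resolve it, and the explicit claim to use original columns makes the step false as stated. The paper avoids this by performing, per column $v$, a \emph{single} operation that replaces the $v$-column with the sum of all $k_1 k_2\cdots k_m$ companion columns indexed by full $m$-tuples $(j_1,\dots,j_m)$ over \emph{all} ancestor first-edge stages at once; this directly yields the product $\prod_i(z/\tau_{i1})$ in the image. To make this a legitimate sequence of elementary operations using only original columns, the paper then processes columns in weakly decreasing order of their number $m$ of ancestor first-edges, noting that every companion column used has a strictly smaller $m$ (unless it is $v$ itself) and therefore has not yet been modified. Your ``triangular, M\"obius-type'' remark gestures at this, but without the all-$m$-tuples sum and the ordering by $m$, the argument does not close.
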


\begin{proof}
For each stage $c$ of vertices in $\T$ we associate the matrix $M_c$ from which we get the ideal of minors $J_c$. Let $\varphi_{\T_{\str}}(M_c)$ denote the matrices obtained by applying $\varphi_{\T_{\str}}$ to the entries of $M_c$. The entries of $\varphi_{\T_{\str}}(M_c)$ are represented by monomials. The idea is to do row and column operations to $M_c$ resulting in a matrix $M_c'$ such that the entries of $\varphi_{\T_{\str}}(M_c')$ are monomials from the set $\mathcal{M}$ defined as in Lemma \ref{lem:SIP_lin_indep}. We point out that the matrices $M_c$ considered here do not necessarily define the ideals of minors of $\T_{\str}$. The matrix $\varphi_{\T_{\str}}(M_c)$ only serves as an intermediate step in understanding the effect of row and column operations on~$M_c$.

Recall that the columns in the matrix $M_c$ are indexed by the vertices in the stage $c$ in $\T$. The entries in the column of $\varphi_{\T_{\str}}(M_c)$ indexed by a vertex $v$ are 
\[\varphi_{\T_{\str}}(p_{[v_1]})=\theta_1\varphi_{\T_{\str}}(p_{[v]}), \ \varphi_{\T_{\str}}(p_{[v_2]})=\theta_2\varphi_{\T_{\str}}(p_{[v]}), \  \ldots , \  \varphi_{\T_{\str}}(p_{[v_k]})\theta_k\varphi_{\T_{\str}}(p_{[v]})\]
where $\theta_1, \ldots, \theta_k$ are the labels of the stage of $v$ in $\T_{\str}$. Suppose the root-to-$v$ path goes through exactly $m>0$ first-edges, and let
\[ \{ \tau_{11}, \tau_{12}, \ldots, \tau_{1k_1} \}, \{ \tau_{21}, \tau_{22}, \ldots, \tau_{2k_2} \}, \ldots, \{ \tau_{m1}, \tau_{m2}, \ldots, \tau_{mk_m} \} \]
be the sets of label of the corresponding $m$ stages in $\T_{\str}$, so that $\tau_{11} \tau_{21} \cdots \tau_{m1}$ divides $\varphi_{\T_{\str}}(p_{[v]})$. 
By repeated use of Lemma \ref{lem:SIP_sub}, we see that for each $m$-tuple $(j_1, \ldots, j_m)$ of integers $1 \le j_i \le k_i$ there is a vertex $u$ in the same stage as $v$ such that 
\begin{equation}
\frac{\tau_{1j_1}\tau_{2j_s} \cdots \tau_{mj_m}}{\tau_{11}\tau_{21}\cdots \tau_{m1}} \varphi_{\T_{\str}}(p_{[v]}) = \varphi_{\T_{\str}}(p_{[u]}).
\end{equation}
Since $u$ is also in the same stage as $v$ in $\T$ there is a column for $u$ in $M_c$. Consider the column operation that replaces the $v$-column by the sum of the $k_1 k_2 \cdots k_m$ columns of those $u$'s. Then the entry $p_{[v_i]}$ in row $i$ is replaced by the linear form $\sum_u p_{[u_i]}$ such that
\begin{align}\label{eq:prf_SIP}
\varphi_{\T_{\str}}\Big(\sum_u p_{[u_i]}\Big) &= \theta_i \sum_{u} \varphi_{\T_{\str}}(p_{[u]}) \\
&= \theta_i \frac{     (\tau_{11}+ \dots + \tau_{1k_1})( \tau_{21}+ \dots + \tau_{2k_2})(\tau_{m1}+ \dots + \tau_{mk_m})    }{\tau_{11}\tau_{21}\cdots \tau_{m1}} \varphi_{\T_{\str}}(p_{[v]})\\
& = \theta_i \frac{z^m}{\tau_{11}\tau_{21}\cdots \tau_{m1}}  \varphi_{\T_{\str}}(p_{[v]}).
\end{align}
In other words, all labels of first-edges in $\varphi_{\T_{\str}}(p_{[v]})$ were substituted by $z$. The idea is to do several such column operations, but we must be careful in which order to do them. Notice that the root-to-$u$ path, for each of the $u$'s we consider above, goes through at most $m$ first-edges. Moreover it goes through exactly $m$ first-edges only if $u=v$. So we can order the columns we want to operate on in weakly decreasing order according to their associated number $m$ of first-edges. In this way the columns that are used in each step are columns from the original matrix $M_c$. 

Let $M'_c$ be the resulting matrix when we have done the above operation to all columns where it can be applied. The entries of $\varphi_{\T_{\str}}( M'_c)$ are all of the form (\ref{eq:prf_SIP}). What remains is to eliminate the $\theta_1$'s in the first row. 
This is accomplished by the row operation replacing the first row by the sum of all rows. Let $M''_c$ denote the matrix obtained after this row operation. The entries $\varphi_{\T_{\str}}(M''_c)$ are monomials obtained from the entries of $\varphi_{\T_{\str}}(M_c)$ and substituting the labels of first-edges by $z$. Now remember that the entries of $M_c$ are all of the form $p_{[v]}$ for some vertex $v$. Consider the root-to-leaf path going through $v$ and after $v$ taking the first-edge in each step. Then take the product of the labels along this path, and substitute each label of a first-edge by $z$. This gives the corresponding entry in the matrix $\varphi_{\T_{\str}}(M''_c)$, and in this way we see that all entries of $\varphi_{\T_{\str}}(M''_c)$ belong to the set $\mathcal{M}$. By Lemma \ref{lem:SIP_lin_indep} the distinct entries of $M''_c$ are linearly independent linear forms $\ell_1, \ldots, \ell_n$. The 2-minors of $M''_c$ is then a set of binomials in $\ell_1, \ldots, \ell_n$ which is a generating set of the ideal $J_c$. It follows that $J_{\T}$ is generated by binomials in $\ell_1, \ldots, \ell_n$. By this the requirements of Theorem \ref{thm:toricJ} are fulfilled, and we have proved that $\ker \varphi_{\T}$ is toric after a linear change of variables.  Last, $p_1 + \dots + p_n$ is among the new variables as it is the preimage of $z^d$. 
\end{proof}

We have now seen two types of staged trees giving rise to toric ideals: the balanced trees and the SIP-trees. Next we shall see that we can combine these two classes to construct even more toric staged trees.

\begin{thm}\label{thm:extend_toric}
Let $\T$ be a staged tree, and let $\cS$ be a subtree with the same root as $\T$. Let $v_1, \ldots, v_m$ denote the vertices of $\T$ which are leaves of the subtree $\cS$. Suppose
\begin{enumerate}
\item the tree $\cS$ is balanced,
\item no internal vertex of $\cS$ is in the same stage as a vertex in some $\T(v_i)$,
\item the stages of vertices in $\T(v_1), \ldots, \T(v_m)$ satisfy the subtree-inclusion property. 
\end{enumerate}
Then $\T$ has toric structure. 
\end{thm}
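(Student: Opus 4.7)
The plan is to apply \cref{thm:toricJ} with a carefully chosen intermediate ideal $J$ that combines the balanced structure of $\cS$ (via \cref{thm:balanced}) with the SIP structure of the subtrees $\T(v_i)$ (via \cref{thm:SIP_toric}). The linear change of variables is built block-by-block: each $\T(v_i)$ is itself an SIP-tree because the SIP condition on a stage $c$ of $\T$ restricts to an SIP condition on $c \cap \T(v_i)$, so applying \cref{thm:SIP_toric} to each $\T(v_i)$ yields linear forms $\ell_j^{(i)}$ supported in the coordinate block $\{p_r : \lambda_r \text{ passes through } v_i\}$, one of which is $p_{[v_i]}$. Because $v_1, \ldots, v_m$ are the leaves of $\cS$, the leaf-sets of the subtrees $\T(v_i)$ partition the leaves of $\T$, so the collection $\{\ell_j^{(i)}\}_{i,j}$ consists of $n$ linearly independent forms living in disjoint coordinate blocks. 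Moreover $\varphi_\T(\ell_j^{(i)}) = m_{v_i}\cdot\varphi_{\T(v_i)}(\ell_j^{(i)})$, where $m_{v_i}$ is the product of labels on the root-to-$v_i$ path in $\cS$; by \cref{thm:SIP_toric} the second factor is representable by a monomial, which verifies conditions~(i) and~(ii) of \cref{thm:toricJ}.

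For the intermediate ideal, I would set $J = J^{(A)} + J^{(B)}$, where $J^{(A)}$ is obtained from substituting $\tilde q_j \mapsto p_{[v_j]}$ in the binomial generators of $\ker \varphi_\cS \subset \R[\tilde q_1, \ldots, \tilde q_m]$ (which is toric by \cref{thm:balanced} applied to the balanced tree $\cS$), and $J^{(B)}$ is the sum of the determinantal ideals $J_c$ over every stage $c$ of $\T$ whose vertices lie in $\bigcup_i \T(v_i)$. Since $\varphi_\T(p_{[v_j]}) = m_{v_j} = \varphi_\cS(\tilde q_j)$ we have $J^{(A)} \subseteq \ker \varphi_\T$, and $J^{(B)} \subseteq J_\T \subseteq \ker \varphi_\T$ by \cref{lm:modelinvariantprime}. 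Conversely, condition~2 forces each stage of $\T$ to lie entirely inside $\cS$ or entirely inside $\bigcup_i \T(v_i)$: generators of $I_\T$ from the former case, rewritten in $\tilde q_j$'s, are $2$-minors of the stage matrix of $\cS$ and hence lie in $\ker \varphi_\cS$, so they lie in $J^{(A)}$ after substitution; generators from the latter case lie in some $J_c$ inside $J^{(B)}$. Thus $I_\T \subseteq J \subseteq \ker \varphi_\T$.

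What remains is to check condition~(iii): that $J$ is a binomial ideal in the $\ell$-variables. Piece $J^{(A)}$ is immediate, since each $\tilde q_j$ becomes the single variable $p_{[v_j]}$. For piece $J^{(B)}$ I would apply \cref{lm:elementaryoperations} together with the column and row operations from the proof of \cref{thm:SIP_toric} to each $M_c$, treating each $\T(v_i)$-block of columns independently. The key point is that for $u \in \T(v_i)$ the SIP first-edges on the root-to-$u$ path lie entirely in the $\T(v_i)$-portion of that path, since by hypothesis only the stages in $\bigcup_i \T(v_i)$ are required to be SIP, and by condition~2 these stages share no vertex with $\cS$-internal stages; so the column operation on the $u$-column only sums columns indexed by other vertices of $\T(v_i)$, and produces exactly the same entry as when \cref{thm:SIP_toric} is applied to $\T(v_i)$ alone. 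Within-block $2$-minors are therefore handled by \cref{thm:SIP_toric}. The main obstacle is the cross-block case: for a stage $c$ with vertices $u \in \T(v_i)$ and $w \in \T(v_j)$ with $i \ne j$, a $2 \times 2$-minor on rows $l,k$ and columns $u,w$ becomes $\ell_a^{(i)} \ell_b^{(j)} - \ell_c^{(i)} \ell_d^{(j)}$, a genuine binomial because the entries of the $u$- and $w$-columns live in disjoint variable blocks. Once this is established, \cref{thm:toricJ} applies and $\ker \varphi_\T$ is toric in the variables $\ell$.
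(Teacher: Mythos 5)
Your proposal is correct and follows essentially the same route as the paper: the intermediate ideal $J = J^{(A)} + J^{(B)}$ is exactly the paper's $\overline{\ker\varphi_{\cS}} + \sum_{c\in C}J_c$, the linear forms are assembled block-by-block from the column/row operations in the proof of \cref{thm:SIP_toric} applied to each $\T(v_i)$ with $p_{[v_i]}$ serving as the anchor variable, and the conclusion is drawn via \cref{thm:toricJ}. Your explicit remark about cross-block $2$-minors being binomials because the column blocks live in disjoint variable sets is a point the paper folds into the phrase ``it follows in the same way as in the proof of Theorem~\ref{thm:SIP_toric}''; it is not an ``obstacle'' so much as an automatic consequence of the block structure, but stating it makes the argument more transparent.
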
 
\begin{proof}
Let $\varphi_{\cS}: \R[q_1, \ldots, q_m] \to \R[\Theta,z]$ and $\varphi_{\T}:\R[p] \to \R[\Theta,z]$ be defined in the usual way. This means that $\varphi_{\cS}$ maps $q_i$ to the product of the edge labels along the path from the root to $v_i$, or in other words $\varphi_{\cS}(q_i)=\varphi_{\T}(p_{[v_i]})$. Let $\overline{\ker \varphi_{\cS}}$ denote the ideal of $\R[p]$ obtained from $\ker \varphi_{\cS}$ by substituting each $q_i$ by $p_{[v_i]}$. As $\cS$ is a balanced staged tree the ideal $\ker \varphi_{\cS}$ is binomial. Then $\overline{\ker \varphi_{\cS}}$ is generated by binomials in $p_{[v_1]}, \ldots, p_{[v_m]}$. We construct the ideal $\overline{I_{\cS}} \subset \R[p]$ from $I_{\cS} \subset \R[q_1, \ldots, q_m]$ in the same way. The ideal $\overline{I_{\cS}}$ is exactly the same as taking the sum of the ideals $I_c$ over all stages $c$ of $\T$ that occurs in the subtree $\cS$. 

Let $C$ be the set of stages that occur in the induced subtrees $\T(v_1), \ldots, \T(v_m)$. For a stage $c \in C$ the matrix $M_c$ is a $1 \times m$ block matrix where the $i$-th block is the matrix of the stage $c$ in $\T(v_i)$. We can do the column operations from the proof of Theorem \ref{thm:SIP_toric} to each of the blocks. We can also do the row operation of replacing the first row by the sum of all rows simultaneously on all blocks. It then follows in the same way as in the proof of Theorem \ref{thm:SIP_toric} that $\sum_{c \in C} J_c$ is a binomial ideal in linearly independent linear forms $\ell_1, \ldots, \ell_n$. Among $\ell_1, \ldots, \ell_n$ we find $p_{[v_1]}, \ldots, p_{[v_m]}$. 

The idea is now to apply Theorem \ref{thm:toricJ} with the ideal $J=\overline{\ker \varphi_{\cS}}+\sum_{c\in C}J_c$. We have seen that $J$ is generated by binomials in $\ell_1, \ldots, \ell_n$, and it is clear that $J \subseteq \ker \varphi_{\T}$. We also have the inclusion $I_{\T} \subseteq J$ as
\[
I_{\T} = \overline{I_{\cS}} + \sum_{c \in C} I_c \subseteq \overline{\ker \varphi_{\cS}}+\sum_{c\in C}J_c=J
\]
which completes the proof. 
\end{proof}

\begin{rmk}
The balanced trees are covered by Theorem \ref{thm:extend_toric}, as we can take $\cS$ to be the whole tree. Theorem \ref{thm:extend_toric} also covers \sip-trees as we can take $\cS$ as just the root. 
\end{rmk}

\begin{rmk}
Even though the Theorems~\ref{thm:SIP_toric} and~\ref{thm:extend_toric} do not give explicit expressions for the linear change of variables revealing the toric structure of $\kerf$, the proofs are constructive. Given any staged tree $\T$ covered by \cref{thm:extend_toric}, the linear change of variables can be obtained following the steps in the two proofs. 
\end{rmk}

\begin{ex}\label{ex:continued}
We now return to the tree $\T$ in \cref{fig:nonCM} discussed in Example \ref{ex:nonCM}. Let $v_1, \ldots, v_8$ be the vertices on distance 3 from the root, and let $\cS$ be the subtree with $v_1, \ldots, v_8$ as leaves. Then $\cS$ is balanced, and the light green and dark green stages in $\T(v_1), \ldots, \T(v_8)$ satisfy the subtree-inclusion property as all children of the green vertices are leaves. So we can apply Theorem \ref{thm:extend_toric}, and it follows that $\ker \varphi_\T$ is toric. The ideal $\overline{\ker \varphi_{\cS}}$ defined as in the proof of Theorem \ref{thm:extend_toric} is generated by binomials in the linear forms 
\begin{equation}
\{ p_{[v_1]}, \ldots, p_{[v_8]}\} = \{  
p_1+p_2,\ p_3,\ p_4+p_5,\ p_6,\ p_7+p_8,\ p_9, \ p_{10}+p_{11},\ p_{12} \}.
\end{equation}
The ideals $J_{\color{LimeGreen}\bullet}$ and $J_{\color{OliveGreen}\bullet}$ are given by the determinants
\begin{equation}
\begin{vmatrix}
p_1 & p_{10} \\
p_2 & p_{11}
\end{vmatrix}=\begin{vmatrix}
p_1+p_2 & p_{10}+p_{11} \\
p_2 & p_{11}
\end{vmatrix} \ 
\text{ and } \ 
\begin{vmatrix}
p_4 & p_{7} \\
p_5 & p_{8}
\end{vmatrix}=\begin{vmatrix}
p_4+p_5 & p_{7}+p_{8} \\
p_5 & p_{8}
\end{vmatrix}.
\end{equation}
Altogether, $\ker \varphi_\T$ is a binomial ideal if we choose $p_1+p_2,\ p_2, \ p_3,\ p_4+p_5,\ p_5,\ p_6,\ p_7+p_8,\ p_8,\ p_9, p_{10}+p_{11},p_{11}, p_{12}$ as new variables. 
\end{ex}

\section{One-stage tree models}
\label{sec:one_stage}

This section explores the algebraic structure of staged trees with all inner vertices in the same stage, called \emph{one-stage trees}.  We  connect staged tree models represented by binary one-stage trees to Veronese embeddings---well known maps in algebraic geometry and commutative algebra \cite{cox2011toric,bocci2019introduction,Harris}. We further show that all binary one-stage tree models have toric structure and give evidence of this toric structure in other classes. 
\medskip

Take positive integers $k$ and $d$ and let $\R[\Theta]$ be the polynomial ring in variables $\theta_1,\dots,\theta_k$.  The Veronese embedding associated to  $k$ and $d$ is defined as the map
\begin{equation}
\label{eq:veroneseEmbedding}
  \nu_{k,d}\colon \R[q_{\boldsymbol{i}} \mid \boldsymbol{i}\in \Z_{\geq 0}^{k}, i_1+\cdots +i_k=d]\rightarrow \R[\Theta],\quad \nu_{k,d}(q_{\boldsymbol{i}})=\theta^{\boldsymbol{i}}.
\end{equation}
The  kernel of $ \nu_{k,d}$ is the toric ideal of the  Veronese embedding.
 It is generated by quadratics  $q_{\boldsymbol{i}}q_{\boldsymbol{j}}=q_{\boldsymbol{s}}q_{\boldsymbol{t}}$ such that $\boldsymbol{i}+\boldsymbol{j}=\boldsymbol{s}+\boldsymbol{t}$. 
The algebra $\imv$ in $\R[\Theta]$ over  $\R$ is the \emph{Veronese algebra} for positive integers $k$ and $d$.  We will use this notion extensively in this section. The set with the $\binom{k+d-1}{d}$ monomials $\Theta^d={\{\theta^{\boldsymbol{i}}\mid i\in \Z_{\geq 0}^{k}, i_1+\cdots +i_k=d\}}$ is the \emph{canonical basis} for the Veronese algebra $\imv$.

Denote henceforth by $\mathfrak{T}_{k,d}$ the set of  all one-stage trees of depth $d$ with  parameters $\theta_1,\dots,\theta_k$.  The staged trees with the lowest number of edges in $\mathfrak{T}_{k,d}$ are the ones that have the form of a \emph{caterpillar}, that is they have exactly one internal node at each level.  The symbol $\cC$ is now used to indicate such a tree.  The trees in $\mathfrak{T}_{k,d}$  with the highest number of edges are the ones where all root-to-leaf paths have depth $d$. These are called \emph{maximal} trees.
Even though one-stage tree models have this strong restriction of having exactly one stage, they still provide a variety of models not covered by previous results. For instance, only one-stage maximal trees where all leaves are at the same distance from the root are balanced.  And the staged tree in \cref{fig:monomial_alg_ex} as well as trees $\T_8, \T_{10},\T_{11}, \T_{14},\T_{18}$ in \cref{fig:tableT33} do not satisfy the subtree-inclusion property of \cref{thm:SIP_toric}.

\begin{lem}
Maximal trees are the only balanced one-stage tree models in $\mathfrak{T}_{k,d}$. 
\end{lem}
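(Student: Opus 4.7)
The plan is to prove the biconditional ``balanced $\Leftrightarrow$ maximal'' in two directions, noting that for $d=0,1$ the statement is immediate (a depth-$0$ tree has no internal vertices, and a depth-$1$ one-stage tree must have only leaf children of the root, hence is maximal and trivially balanced). So the interesting case is $d\ge 2$.

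For the easy direction (maximal $\Rightarrow$ balanced): in a maximal one-stage tree every internal vertex at depth $\ell$ is the root of a complete $k$-ary subtree of depth $d-\ell$ whose internal vertices all use the same labels $\theta_1,\ldots,\theta_k$. A quick induction on $d-\ell$ gives $t(v_i) = (\theta_1+\cdots+\theta_k)^{d-\ell-1}$ independently of the child index $i$. The balance identity $t(u_i)t(v_j)=t(u_j)t(v_i)$ then holds as an immediate tautology for any two internal vertices $u,v$ (which lie automatically in the unique stage).

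For the hard direction (balanced $\Rightarrow$ maximal), let $\T$ be balanced and pick any root-to-leaf path of maximum length $d$; let $v$ denote its penultimate vertex. Then $v$ is internal at depth $d-1$, and \emph{all} children of $v$ must be leaves, for otherwise the tree would have a path longer than $d$. Consequently $t(v_i)=1$ for every $i$, and applying \cref{eq:bal} between $v$ and any other internal vertex $u$ yields
\[
t(u_i)\;=\;t(u_i)\,t(v_j)\;=\;t(u_j)\,t(v_i)\;=\;t(u_j) \qquad \text{for all } i,j.
\]
This is the key uniformity step: every internal vertex of $\T$ has $k$ children with identical subtree polynomials. Setting $s:=\theta_1+\cdots+\theta_k$, the identity becomes $t(u)=s\cdot t(u_1)$ for every internal $u$. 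Iterating this recursion along the first-child chain $r,\, r_1,\, (r_1)_1,\,\ldots$ until the chain reaches a leaf after $h$ steps gives $t(r)=s^h$.

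To finish, note that $t(r)$ has top degree equal to the tree's depth $d$, so from $\deg t(r)=h$ we must have $h=d$, and therefore $t(r)=s^d$ is \emph{homogeneous} of degree $d$. Since each monomial of $t(r)$ records the length of some root-to-leaf path, every root-to-leaf path must have length $d$, which is precisely the definition of maximality.

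The main obstacle is the transition from the balance identity to the uniformity $t(u_1)=\cdots=t(u_k)$: it requires the existence of an internal vertex at depth $d-1$ whose children are all leaves, and the one-stage assumption is what lets us compare this special $v$ with an arbitrary internal $u$. Once that uniformity is in hand, the remainder of the argument is a short recursion in the first-child direction together with a degree comparison.
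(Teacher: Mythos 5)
Your maximal $\Rightarrow$ balanced direction is essentially the same as the paper's (both reduce to the observation that in a maximal one-stage tree $t(u_i)$ is independent of the child index $i$).

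For balanced $\Rightarrow$ maximal you take a genuinely different route, and in my view a tighter one. The paper's contrapositive argument rests on the claim that a non-maximal one-stage tree must contain a vertex with both a leaf child and an internal child; this is asserted without justification and can in fact fail --- for instance a root in $\mathfrak{T}_{2,3}$ whose two subtrees are complete binary trees of the distinct depths $1$ and $2$ is non-maximal yet has no vertex with mixed children (and the paper's degree-mismatch argument only shows a \emph{balanced} tree has no mixed-children vertex, which by itself does not yield maximality). Your proof fills this gap. Applying the balance identity against the special vertex $v$ at depth $d-1$ (whose children are forced to be leaves, so $t(v_i)=1$) gives the uniformity $t(u_1)=\cdots=t(u_k)$ for \emph{every} internal $u$; telescoping $t(u)=(\theta_1+\cdots+\theta_k)\,t(u_1)$ along a first-child chain yields $t(r)=(\theta_1+\cdots+\theta_k)^h$; and the top-degree count $\deg t(r)=d$ forces $h=d$, so $t(r)$ is homogeneous and every root-to-leaf path has length $d$. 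This is more work than the paper's one-line degree comparison, but it requires no structural claim about the shape of a non-maximal tree and delivers a positive fact (all siblings in a balanced one-stage tree have equal subtree polynomials) that settles the direction cleanly.
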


\begin{proof}
Let $u_1,\dots, u_k$ be children of an internal vertex $u$ of the one-stage tree $\cT$. If the tree is maximal then $t(u_i)=t(u_j)$ for all children $u_i$ and $u_j$ of $u$. Hence,  the equality $t(u_i)t(v_j)=t(u_j)t(v_i)$ is true for any two internal vertices in the maximal one-stage tree. 
If the tree is not maximal, there must be a vertex $u$ in $\cT$ that has both a leaf  $u_i$ and another internal vertex $u_j$ as its children. Take  any vertex $v$ in $\T$ with all leaves as its children. The sides of the equation  $t(u_i)t(v_j)= t(v_i)t(u_j)$ are then of different degrees, making equality impossible.
\end{proof}

Given a one-stage tree $\T$ and one of its root-to-leaf paths~$\lambda$,  let $\boldsymbol{i}_{\lambda}$  be the vector in $\Z_{\geq 0}^k$ recording the number of times $\theta_1,\dots,\theta_k$ appear in  $\lambda$. 
The algebra associated to  $\cT$ has the form 
\begin{equation}
\imf= \R[\theta^{\boldsymbol{i}_{\lambda}} \mid \lambda \text{ root-to-leaf path in } \T]\slash \langle\theta-z\rangle.
\end{equation}
\begin{lem}\label{lm:inclusiontoVeronese}
The algebra of any one-stage tree model in  $\mathfrak{T}_{k,d}$ is a subalgebra of the Veronese algebra $\imv$.
\end{lem}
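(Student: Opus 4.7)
The plan is to use the fact that a one-stage tree has only a single set of labels $\theta_1,\dots,\theta_k$, so the ideal $\langle \theta - z\rangle$ of homogenised sum-to-one conditions reduces to the single linear relation $z - (\theta_1+\cdots+\theta_k)$. This makes the quotient $\R[\Theta,z]/\langle \theta-z\rangle$ canonically isomorphic to $\R[\Theta]$ via the substitution $z \mapsto \theta_1+\cdots+\theta_k$, so both $\imf$ and $\imv$ can be viewed as subalgebras of the same polynomial ring $\R[\Theta]$.

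Under this identification, each generator of $\imf$ has the form
\begin{equation}
\varphi_\T(p_r) \;=\; z^{d-d(\lambda_r)} \prod_{e\in E(\lambda_r)}\theta(e) \;=\; (\theta_1+\cdots+\theta_k)^{d-d(\lambda_r)} \prod_{e\in E(\lambda_r)}\theta(e).
\end{equation}
The right-hand side is a product of a polynomial homogeneous of degree $d-d(\lambda_r)$ in $\theta_1,\dots,\theta_k$ with a monomial of degree $d(\lambda_r)$, so it is homogeneous of total degree $d$ in $\theta_1,\dots,\theta_k$. Expanding via the multinomial theorem realises each $\varphi_\T(p_r)$ as an $\R$-linear combination of monomials from $\Theta^d$.

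Since $\imv$ contains the $\R$-span of $\Theta^d$, each generator $\varphi_\T(p_r)$ lies in $\imv$. Because $\imf$ is the $\R$-subalgebra generated by these elements, the inclusion $\imf \subseteq \imv$ follows at once. I do not foresee any obstacle: the proof is essentially an unpacking of the homogenisation construction, once one recognises that having only one stage collapses the sum-to-one conditions into the single relation $z = \theta_1+\cdots+\theta_k$ and so forces every image $\varphi_\T(p_r)$ to be a homogeneous degree-$d$ expression in the $\theta_i$'s.
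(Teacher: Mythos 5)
Your proof is correct and takes essentially the same approach as the paper: both observe that the homogenising variable $z$ is identified with $\theta_1+\cdots+\theta_k$, and then use the multinomial theorem to expand each generator $\varphi_\T(p_r)$ as an $\R$-linear combination of degree-$d$ monomials in $\theta_1,\dots,\theta_k$, hence an element of $\imv$. Your write-up simply spells out the intermediate steps (the collapse of the sum-to-one ideal to a single relation, the homogeneity count) that the paper leaves implicit.
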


\begin{proof}
Take  $\boldsymbol{i}_{\lambda}=(i_1,\dots, i_k)$ as above.  
By the multinomial theorem, each generator $\theta^{\boldsymbol{i}_{\lambda}}z^{d-i_1-\cdots -i_k}=\theta^{\boldsymbol{i}_{\lambda}}(\theta_1+\cdots +\theta_k)^{d-i_1-\cdots -i_k}$ in the image of $\varphi_{\T}$ is a linear combination of the generators of $\imv$.
\end{proof}

\begin{rmk}
The involvement of Veronese algebras in both the definition of a Multinomial distribution and in the one-stage tree models allows us to draw a natural connection between these two concepts. This extends the well-known connection of squarefree staged trees to these distributions~\cite{Goergen.etal.2020}. Recall that a Multinomial distribution models the outcome of $d$ repeated experiments where the outcome of each trial has a categorical distribution with state space $\Omega$ of size $k$ and associated probabilities $\theta_1,\dots, \theta_k$.  The probability that in $d$ trials, the $i^\text{th}$ output in $\Omega$ appears exactly  $t_i$  times, for  $i=1,\dots, k$, is $\binom{d}{t_1,\dots,t_k}\theta_1^{t_1}\dots \theta_k^{t_k}$ when $t_1+\cdots +t_k=d$ and $0$ otherwise. These are all terms in the sum of  root-to-leaf parametrisations in the maximal tree of $\mathfrak{T}_{k,d}$.  Non-maximal one-stage trees can be interpreted as realisations of a generalisation of a Multinomial distribution which involves conditions. The terms in the sum of all root-to-leaf parametrisations, again, model the probabilities of the possible outcomes. Compare for instance the example of flipping a biased coin from the introduction of this present paper to subsection on Bernoulli binary models in \cite{bocci2019introduction}. 
\end{rmk}
Returning to one-stage trees, the following is an immediate consequence of \cref{lm:inclusion} and~\ref{lm:inclusiontoVeronese} and \cref{prop:monomial_algebra}.
\begin{cor}\label{cor:fullVeronese}
 Let $\T\subset \T'$  be two trees in $\mathfrak{T}_{k,d}$ that share the same root. If   $\imf=\imv$ then  $\im \: \varphi_{\T'}$ is equal to $\imv$, too. In particular, all such trees have toric structure. 
\end{cor}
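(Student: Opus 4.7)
The proof will be essentially a chain of inclusions, requiring no new ideas beyond the three results cited just before the statement. The plan is to sandwich $\im\,\varphi_{\T'}$ between two copies of $\imv$ and then invoke \cref{prop:monomial_algebra}.

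First, since $\T \subseteq \T'$ share the same root and both live in $\mathfrak{T}_{k,d}$, \cref{lm:inclusion} gives the inclusion
\begin{equation}
\imf \;\subseteq\; \im\,\varphi_{\T'}.
\end{equation}
Next, because $\T' \in \mathfrak{T}_{k,d}$ is itself a one-stage tree of depth $d$ in the parameters $\theta_1,\ldots,\theta_k$, \cref{lm:inclusiontoVeronese} gives
\begin{equation}
\im\,\varphi_{\T'} \;\subseteq\; \imv.
\end{equation}
Combining these two inclusions with the hypothesis $\imf = \imv$ produces the chain
\begin{equation}
\imv \;=\; \imf \;\subseteq\; \im\,\varphi_{\T'} \;\subseteq\; \imv,
\end{equation}
which forces the equality $\im\,\varphi_{\T'} = \imv$, establishing the first claim.

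For the second claim, recall that the Veronese algebra $\imv$ is by definition generated by the monomials in $\Theta^d$ inside $\R[\Theta]$, hence is a monomial algebra in the sense of \cref{sec:toric_ideals_algebras}. Since $\im\,\varphi_{\T'} \cong \imv$ is isomorphic to a monomial algebra, \cref{prop:monomial_algebra} applies and yields that $\ker\varphi_{\T'}$ is toric after an appropriate linear change of coordinates; the same conclusion holds for $\ker\varphi_\T$ directly from the hypothesis. I do not anticipate any obstacle here: everything reduces to bookkeeping with the three lemmas, and no further combinatorial analysis of the trees is needed.
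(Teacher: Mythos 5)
Your proof is correct and is exactly the argument the paper has in mind: the paper simply asserts the corollary is ``an immediate consequence'' of \cref{lm:inclusion}, \cref{lm:inclusiontoVeronese}, and \cref{prop:monomial_algebra}, and your chain of inclusions $\imv = \imf \subseteq \im\,\varphi_{\T'} \subseteq \imv$ followed by the application of \cref{prop:monomial_algebra} is precisely that immediate consequence, spelled out. No difference in approach.
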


In what follows we show that all one-stage trees in $\mathfrak{T}_{2,d}$  have toric structure.  We do this by first proving that any caterpillar tree in  $\mathfrak{T}_{2,d}$  has the Veronese algebra $\im\: \nu_{2,d}$ as its algebra. \cref{cor:fullVeronese}  then completes the proof. 

\begin{thm}
\label{thm:binary}
All binary one-stage trees have toric structure and are  parametrised  by  $\im \, \nu_{2,d}$.
\end{thm}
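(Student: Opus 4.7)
The plan is to follow the strategy sketched just before the theorem: reduce to caterpillars and invoke \cref{cor:fullVeronese}. First I will verify that every binary one-stage tree $\T \in \mathfrak{T}_{2,d}$ contains a caterpillar of depth $d$ sharing the same root. Since $\T$ has depth $d$, I pick a root-to-leaf path $v_0 \to v_1 \to \cdots \to v_d$ of length $d$ and, at each internal vertex $v_i$ ($0 \le i \le d-1$) along this spine, prune the induced subtree rooted at the other child so that that child becomes a leaf. The resulting tree $\cC \subseteq \T$ is a caterpillar of depth $d$ sharing the root of $\T$, and it is itself a binary one-stage tree since no stage structure is broken by pruning.

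Next I compute $\im \varphi_\cC$ directly. After homogenising with $z = \theta_1+\theta_2$, and adopting the convention that at each spine vertex the $\theta_1$-edge leads to a leaf, the generators of $\im \varphi_\cC$ are
\begin{equation*}
g_i = \theta_1 \theta_2^i (\theta_1+\theta_2)^{d-1-i}, \quad i = 0, 1, \ldots, d-1, \quad \text{and} \quad g_d = \theta_2^d.
\end{equation*}
The inclusion $\im \varphi_\cC \subseteq \im \nu_{2,d}$ is \cref{lm:inclusiontoVeronese}, so the content lies in establishing the reverse inclusion. Expanding $g_i$ by the binomial theorem gives
\begin{equation*}
g_i = \sum_{j=i}^{d-1} \binom{d-1-i}{j-i}\theta_1^{d-j}\theta_2^{j}, \qquad i = 0, \ldots, d-1,
\end{equation*}
so the change-of-basis matrix from $(g_0, \ldots, g_d)$ to the canonical basis $(\theta_1^d, \theta_1^{d-1}\theta_2, \ldots, \theta_2^d)$ of $\im \nu_{2,d}$ is upper triangular with ones on the diagonal, hence invertible. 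This gives $\spn(g_0,\ldots,g_d) = \spn(\Theta^d)$, so each Veronese generator lies in $\im\varphi_\cC$ and $\im \varphi_\cC = \im \nu_{2,d}$.

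Finally, \cref{cor:fullVeronese} applied to $\cC \subseteq \T$ yields $\imf = \im \nu_{2,d}$, which is a monomial algebra; \cref{prop:monomial_algebra} then implies $\kerf$ is toric after an appropriate linear change of coordinates. I do not anticipate any serious obstacle: the only nontrivial step is the linear algebra verifying that the $g_i$ and the canonical Veronese basis span the same subspace, which reduces to the invertibility of an explicit triangular binomial matrix. The rest is combinatorial setup and appeals to results already established in \cref{sec:toric_ideals_algebras} and earlier in this section.
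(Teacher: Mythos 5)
Your overall strategy --- prune $\T$ to a depth-$d$ caterpillar $\cC$ sharing the root, establish $\im\,\varphi_\cC=\im\,\nu_{2,d}$, and finish by \cref{cor:fullVeronese} and \cref{prop:monomial_algebra} --- is exactly the paper's. The chain $\im\,\nu_{2,d}\subseteq\im\,\varphi_\cC\subseteq\imf\subseteq\im\,\nu_{2,d}$, the pruning construction, and the appeals to \cref{lm:inclusion} and \cref{lm:inclusiontoVeronese} are all fine.

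The gap is the phrase ``adopting the convention that at each spine vertex the $\theta_1$-edge leads to a leaf.'' This is not a convention you are free to adopt. The caterpillar $\cC$ inherits its labels from $\T$: once a depth-$d$ spine is fixed, it is \emph{determined} at each level whether the spine continues along the $\theta_1$-edge or the $\theta_2$-edge, and these choices can alternate from level to level --- indeed the paper's own \cref{fig:cat} shows precisely such a mixed caterpillar with $\theta(1)=\theta_1,\bar\theta(1)=\theta_2,\theta(2)=\theta_2,\bar\theta(2)=\theta_1$. A global swap $\theta_1\leftrightarrow\theta_2$ can normalise the first spine label but not the rest, and a binary one-stage tree of depth $d\ge3$ need not contain any uniformly-oriented caterpillar (take $\T$ itself to be an alternating caterpillar). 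For a mixed spine the generators are $\theta(1)\cdots\theta(i)\,\bar\theta(i+1)\,(\theta_1+\theta_2)^{d-1-i}$, not $\theta_1\theta_2^i(\theta_1+\theta_2)^{d-1-i}$, and your change-of-basis matrix is no longer triangular. Concretely, at depth $3$ with spine $\theta(1)=\theta_1,\theta(2)=\theta_2$ the generators are $\theta_2 z^2,\ \theta_1^2 z,\ \theta_1\theta_2^2,\ \theta_1^2\theta_2$, whose coefficient matrix with respect to $(\theta_1^3,\theta_1^2\theta_2,\theta_1\theta_2^2,\theta_2^3)$ is
\[
\begin{pmatrix} 0 & 1 & 2 & 1 \\ 1 & 1 & 0 & 0 \\ 0 & 0 & 1 & 0 \\ 0 & 1 & 0 & 0 \end{pmatrix},
\]
which is invertible (determinant $1$) but not triangular under any reordering of rows and columns --- so the one-line argument evaporates. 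Proving invertibility for an \emph{arbitrary} orientation is the actual content here; the paper does it by an induction on the supports $F_t$ and $\bar F_t$ of $f_t=\theta(1)\cdots\theta(d-t)z^t$ and $\bar f_t$, showing step by step that every monomial of $\Theta^d$ lies in $\im\,\varphi_\cC$. You need that induction, or some other argument valid for mixed spines, to close the gap.
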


\begin{proof}
Take a caterpillar tree $\cC$ of depth $d$  formed by cutting off branches of a staged  tree $\cT$ in $\mathfrak{T}_{2,d}$.  The strategy is to prove inclusions \(\im \,\nu_{2,d}\subseteq\im \,\varphi_{\cC}\subseteq\imf \subseteq \im \,\nu_{2,d}.  \) We only need to prove the first inclusion as the other two hold by \cref{lm:inclusion} and~\ref{lm:inclusiontoVeronese}. 

Use $\theta(i)$ and $\bar{\theta}(i)$ in $\{\theta_1,\theta_2\}$ to denote the parameters in edges emanating from the $i$-th internal vertex of the caterpillar $\cC$, where $\bar{\theta}(t)$ is in the edge leading to a leaf and $i=1,\dots,d$; see \cref{fig:cat}.
%
\begin{figure}
    \centering
    \includegraphics[scale=1]{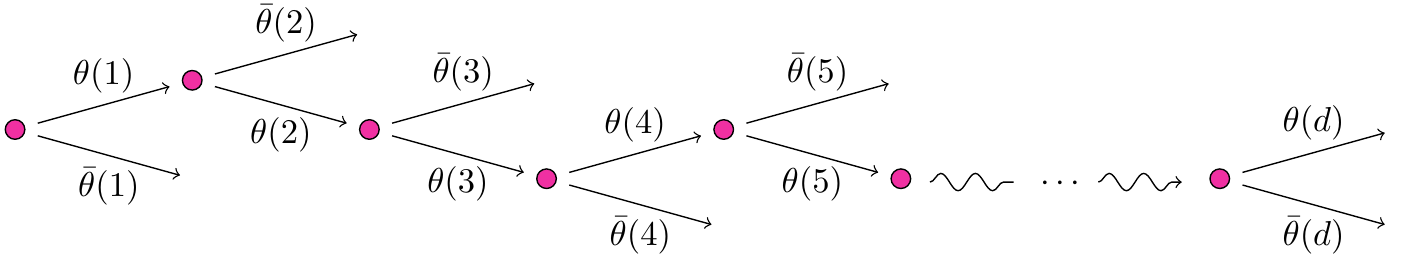}
    \caption{A caterpillar tree in $\mathfrak{T}_{2,d}$. Here we have $\theta(1)=\theta_1, \, \bar{\theta}(1)=\theta_2, \, \theta(2)=\theta_2, \, \bar{\theta}(2)=\theta_1$, and so on.   }
    \label{fig:cat}
\end{figure}
 For every $t$ between $0$ and $d$, let $f_t= \theta(1)\cdots \theta(d-t)z^{t}$ and $\bar{f}_t= \theta(1)\cdots \theta(d-t-1)\bar{\theta}(d-t)z^t$.  In particular, $f_d=\bar{f}_d=z^d$. The form $\bar{f}_t$ changes from $f_t$ only by the use of $\bar{\theta}(d-t)$ instead of $\theta(d-t)$.  The collection $\{f_t,\bar{f}_t \mid t=0,\cdots,d\}$  includes all root-to-leaf parametrisations in tree $\cC$, and hence serves as a generating set for the algebra $\im \: \varphi_\cC$.  Since $z$ is the sum $\theta_1+\cdots +\theta_k$, the terms above can be rewritten as sums
\begin{align}
f_{t}  &=\sum\limits_{i=0}^t \binom{t}{i}\theta(1)\cdots {\theta}(d-t) \cdot \theta_1^i\theta_2^{t-i}= \sum\limits_{i=0}^t \binom{t}{i}f_{t,i}\label{eq:fnonbars}
\intertext{and}
\bar{f}_{t}  &=\sum\limits_{i=0}^t \binom{t}{i}\theta(1)\cdots \theta(d-t-1)   \bar{\theta}(d-t) \cdot \theta_1^i\theta_2^{t-i}=\sum\limits_{i=0}^t \binom{t}{i}\bar{f}_{t,i}.\label{eq:fbars}
\end{align}
For each $t$,  take $F_t= \{f_{t,i}\mid i=0,\ldots, t\}$ to be the support of  $f_t$. Similarly, take the set $\bar{F}_t= \{\bar{f}_{t,i}\mid i=0,\dots, t\}$.  Notice that when $t$ is not zero, all terms of $f_t$ appear in either $f_{t-1}$ or $\bar{f}_{t-1}$. Therefore,  $F_t$ is the union of sets  $F_{t-1}$ and $\bar{F}_{t-1}$.
  
We will show that each  $F_t$ is included the image of  $\varphi_\cC$.  The set $F_0$ contains only one element, namely the term $f_0$ which is one of the generators of $\im \: \varphi_\cC$.   Assume this is true for the set $F_{t-1}$. Since $F_{t}=F_{t-1}\cup \bar{F}_t$, it is enough to show that $\bar{F}_t$ is in the image of $ \varphi_\cC$. 
The set $\bar{F}_0$ is in the image of $ \varphi_\cC$ since its only element is the generator $\bar{f}_{0}$.  Take now $\bar{F}_t$ for some $t$ greater than zero. Then all but one term of $\bar{F}_t$  belong to $F_t$. Indeed, if $\theta(d-t)$ is $\theta_1$ then $\bar{f}_{t,i}=f_{t,i-1}$, for all $i=1,\dots, t$, and if $\theta(d-t)=\theta_2$, then $\bar{f}_{t,i}=f_{t,i+1}$,  for all $i=0,\dots, t-1$. In both cases the remaining term is in the image of  $\varphi_\cC$ due to the sum in  \cref{eq:fbars} being in $\im \, \varphi_\cC$. 
In particular, the canonical generating set  $F_d$ of $\im \, \nu_{2,d}$, is in the image of $\varphi_\cC$. Since all three inclusions are achieved, \cref{prop:monomial_algebra} concludes that all binary one-stage tree models have toric structure. 
\end{proof}
\begin{rmk}
Example  \ref{ex:algebra_binary} suggest how we can use the monomial algebra $\im\, \nu_{2,d}$ of a binary one-stage tree to obtain linear transformations of variables that make $\kerf$ toric: search for $n$  linearly independent linear forms $\ell_1,\dots,\ell_n$ , where $n$ is the number of leaves, that map to monomials $\Theta^d$ of $\R[\theta_1,\theta_2]$. The resulting ideal is generated by binomials of degree at most two.
\end{rmk}

Unfortunately, not all one-stage trees have Veronese algebras as their defining algebras. Take for instance any caterpillar tree in $\mathfrak{T}_{k,d}$ for $k$ and $d$ greater than two. For the algebra of a one-stage tree model to be all $\imv$, the tree  must contain at least  $\binom{k+d-1}{d}$ leaves. The $kd-d+1$ leaves of a caterpillar tree with $k,d>2$ do not suffice.  Even when a one-stage tree has enough leaves, some of them may contribute to linear relations, and the algebra $\imf$ can still be a proper subalgebra of $\imv$.  Such an example is any tree in  the final row of \cref{fig:tableT33}. However, their algebras still appear to  be  monomial algebras generated by subsets of the canonical basis of the Veronese algebra  $\imv$. 
\begin{thm}\label{thm:T33}
All three-parameter one-stage tree models of depth at most three have toric structure.
\end{thm}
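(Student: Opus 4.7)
The plan is to carry out a finite case analysis over the one-stage trees in $\mathfrak{T}_{3,d}$ for $d=1,2,3$, showing in each case that $\imf$ is isomorphic to a monomial algebra so that \cref{prop:monomial_algebra} yields toric structure. The case $d=1$ is immediate since such trees have at most three leaves and the generators $\theta_1,\theta_2,\theta_3$ are already monomials. For $d=2$ and $d=3$ we work from the classification of trees (as in \cref{fig:tableT33}) up to permutation of the parameters $\theta_1,\theta_2,\theta_3$.

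For each tree $\T$ in the list, the first step is to check whether $\imf$ already equals the full Veronese algebra $\im\,\nu_{3,d}$. By \cref{cor:fullVeronese}, this holds whenever $\T$ contains a root-sharing subtree $\T'$ with $\im\,\varphi_{\T'}=\im\,\nu_{3,d}$; such a $\T'$ must have at least $\binom{3+d-1}{d}$ leaves. By inspection this criterion disposes of the large majority of the trees, leaving only a short list of small ``exceptional'' trees, including $\T_8,\T_{10},\T_{11},\T_{14},\T_{18}$ of \cref{fig:tableT33} which were already singled out as not satisfying the subtree-inclusion property and hence not covered by \cref{thm:SIP_toric} or \cref{thm:extend_toric}.

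For each remaining tree, we will prove that $\imf$ is isomorphic to a monomial algebra by exhibiting an appropriate linear change of coordinates in $\R[\theta_1,\theta_2,\theta_3]$ in which every generator of $\imf$ becomes a monomial in the new basis. The generators of $\imf$ split into two types: those of the form $\theta_{i_1}\cdots\theta_{i_d}$, which are already monomials, and those of the form $\theta_{i_1}\cdots\theta_{i_k}z^{d-k}$ with $k<d$, where the factor $z=\theta_1+\theta_2+\theta_3$ obstructs monomiality. For each exceptional tree we will select a new basis of $\spn\{\theta_1,\theta_2,\theta_3\}$, typically by replacing one or two of the $\theta_i$'s with partial sums such as $\theta_i+\theta_j$ chosen to match the branch structure, so that each $z^{d-k}$ factor absorbs into a monomial in the new basis. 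Then we read off $n$ monomials from $\imf$ (where $n$ is the number of leaves of $\T$) that are $\R$-linearly independent in $\imf$, and apply \cref{prop:monomial_algebra}. This is the same philosophy used in the coin-flip example of the introduction and in the proof of \cref{thm:binary}, where the passage from the naive monomial generating set to a different monomial generating set in a new basis of $\theta$-variables is what exposes the toric structure.

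The main obstacle is to find the right linear change of variables for each of the exceptional trees, and in particular to verify that the chosen $n$ preimages in $\R[p]$ are $\R$-linearly independent and generate all of $\imf$. Since each exceptional tree has only a modest number of leaves (at most $\binom{5}{3}=10$), a combinatorial search guided by the positions of the deficient branches suffices, and correctness of the final monomial presentation can be verified either directly by expanding $z=\theta_1+\theta_2+\theta_3$ and tracking supports, or by a finite computation in Macaulay2~\cite{M2}. Assembling the verifications over all trees in $\mathfrak{T}_{3,1}\cup\mathfrak{T}_{3,2}\cup\mathfrak{T}_{3,3}$ then completes the proof.
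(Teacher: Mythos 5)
Your overall scaffolding---finite case analysis up to permutation, disposing of trees whose algebra is the full Veronese algebra via \cref{cor:fullVeronese}, and invoking \cref{prop:monomial_algebra} for the rest---matches the paper's strategy. But the key step you propose for the exceptional trees does not work as stated. You plan to find a linear change of basis in $\spn\{\theta_1,\theta_2,\theta_3\}$ so that \emph{every} generator of $\imf$ becomes a monomial. Any such change replaces $z=\theta_1+\theta_2+\theta_3$ by at most one new basis vector, say $\sigma_2=z$, making generators like $\theta_1^2z$ monomial. But then a pure-$\theta$ generator such as $\theta_1\theta_2\theta_3$ rewrites to $\sigma_1\sigma_3(\sigma_2-\sigma_1-\sigma_3)$, which is not a monomial; conversely, any basis in which $\theta_1,\theta_2,\theta_3$ are themselves coordinates leaves $z$ a three-term sum. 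No single linear change of $\theta$-coordinates can monomialize both types of generators simultaneously, so this route is blocked.

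The paper's argument for the exceptional trees is genuinely different: it leaves the $\theta$-basis alone, identifies $\R[\Theta,z]/\langle\theta-z\rangle$ with $\R[\Theta]$ by substituting $z\mapsto\theta_1+\theta_2+\theta_3$, and shows that $\imf$---as a subalgebra of $\R[\Theta]$---equals a known monomial algebra generated by a \emph{different} set of degree-$d$ monomials than the images of the $p_r$. Concretely, for $\T_4,\T_8,\T_{10},\T_{11},\T_{18}$ one verifies by explicit linear combinations that $\imf$ coincides with $\im\,\varphi_{\T'}$ for a corresponding SIP-tree $\T'$ (namely $\T_5,\T_{16},\T_{18},\T_{17},\T_{19}$), then invokes \cref{lm:permuting}; for $\T_{13},\T_{14}$ one shows the missing monomials of the canonical basis of $\im\,\nu_{3,3}$ lie in $\imf$. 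The isomorphism to a monomial algebra is witnessed not by a change of $\theta$-coordinates but by exhibiting a monomial generating set for the fixed subalgebra $\imf\subseteq\R[\Theta]$; the linear change of variables in \cref{prop:monomial_algebra} then takes place in $\R[p]$, not in $\R[\Theta]$. Your \cref{ex:algebra_binary} reference actually illustrates this: the original generators $\theta_1^2z^2$, etc., never become monomial in any $\theta$-basis---instead the algebra is shown to equal $\R[\Theta^4]$. (Also note that $\T_4$ is among the exceptional trees in the paper's proof, which your list omits.)
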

\begin{proof}
The  only staged tree in $\mathfrak{T}_{3,1}$ has  $\kerf$ equal to the zero ideal.  Every algebra in $\mathfrak{T}_{3,2}$ is isomorphic to the algebra of one of the three staged trees in \cref{fig:32} up to a permutation of variables~$\theta_1,\theta_2$ and~$\theta_3$.  All of the three trees in \cref{fig:32} satisfy the subtree-inclusion property of \cref{thm:SIP_toric}.  In particular, the algebra of the second staged tree shown in \cref{fig:32} is all $\im \, \nu_{3,2}$. because $\theta_3^2=\theta_3z-\theta_1\theta_3-\theta_2\theta_3$ is in $\im \,\varphi_{T_3}$, providing further evidence of toric structure.

\begin{figure}
\centering
\begin{subfigure}[b]{0.3\textwidth}
\centering
\includegraphics{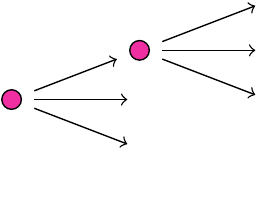}
\end{subfigure}
\quad
\begin{subfigure}[b]{0.3\textwidth}
\centering
\includegraphics{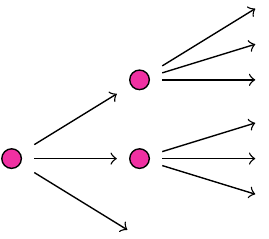}
\end{subfigure}
\quad
\begin{subfigure}[b]{0.3\textwidth}
\centering
\includegraphics{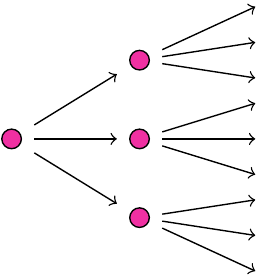}
\end{subfigure}
\caption{The three one-stage trees in $\mathfrak{T}_{3,2}$ up to a permutation of parameters. The edges have labels $\theta_1,\theta_2,\theta_3$, respectively, read from top to bottom. They are  all SIP trees.\label{fig:32}}
\end{figure} 

The collection $\mathfrak{T}_{3,3}$ is quite large. As the remainder of this proof will show, it will be enough to prove that the $20$ trees in \cref{fig:tableT33} have toric structure.  

\begin{figure}[t]
    \centering
    \includegraphics[width=\textwidth]{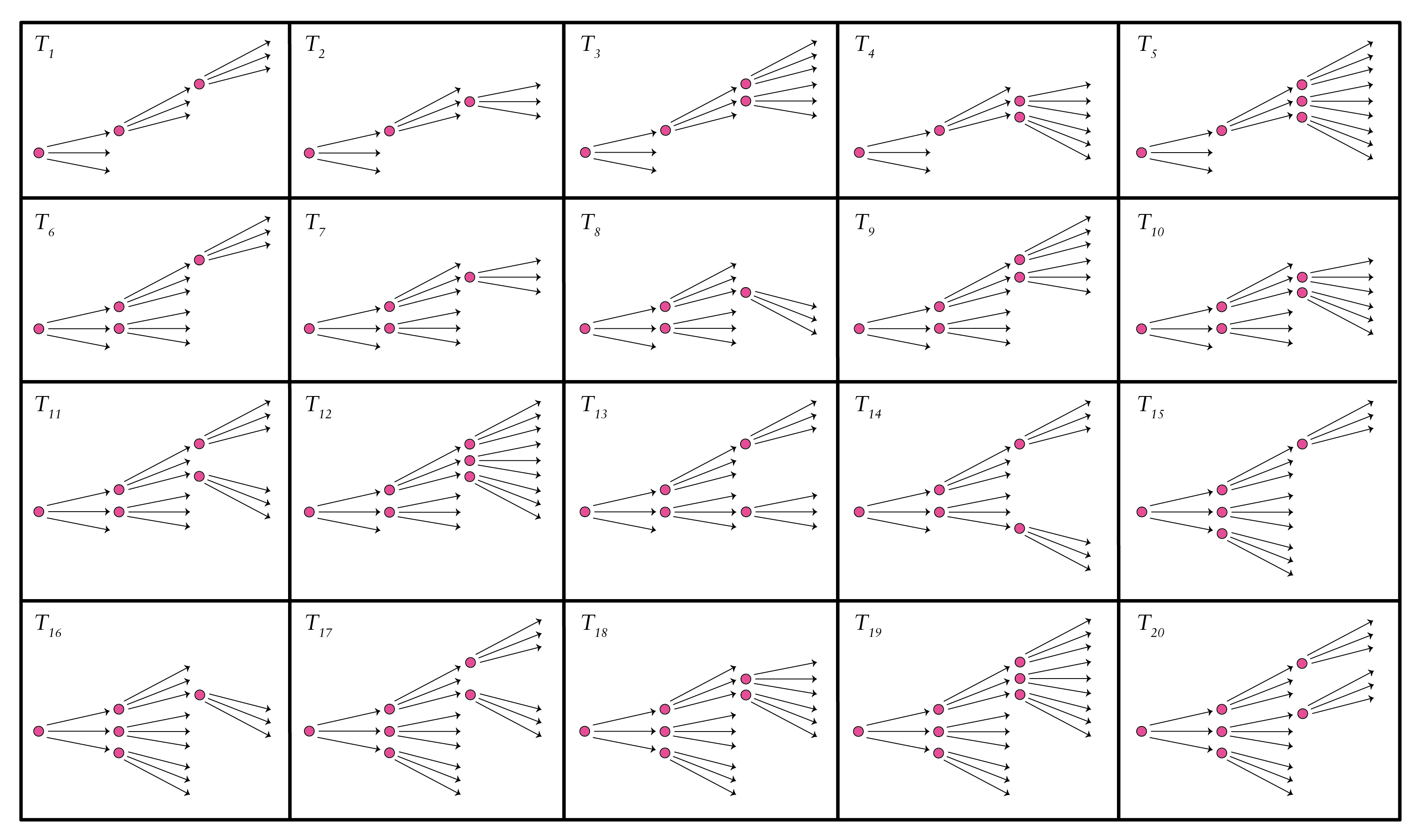}
    \caption{Staged trees in $\mathfrak{T}_{3,3}$ with labels $\theta_1,\theta_2,\theta_3$, respectively, read from top to bottom for each stage. The trees $\T_{13}$ and $\T_{14}$ are defined by the Veronese algebra $\im\,\nu_{3,3}$. The algebra of any other one-stage tree in $\mathfrak{T}_{3,3}$ are either obtained by permuting parameters of a tree in this table, or up to a permutation of parameters contain one of the trees $\T_{13}$ or  $\T_{14}$.}
    \label{fig:tableT33}
\end{figure}

All but the trees numbered $\T_4,\T_8,\T_{10},  \T_{11},\T_{14}$, and $\T_{18}$ in this table satisfy the subtree-inclusion property and hence are toric by \cref{thm:SIP_toric}.  Even more, the algebra of $\T_{13}$ is the full Veronese algebra $\im \, \nu_{3,3}$.   To realise this, compare the generating set of $\T_{13}$  obtained by all root-to-leaf paths with the canonical basis of $\im \, \nu_{3,3}$. The  monomials $\theta_1\theta_2\theta_3,  \theta_1\theta_3^2, \theta_2\theta_3^2$ and $\theta_3^3$  in this basis do not coincide with any root-to-leaf path parametrisation of $\T_{13}$. However, they all are linear combinations of the latter ones: $ \theta_1\theta_2\theta_3= \theta_1\theta_2z-\theta_1^2\theta_2-\theta_1\theta_2^2,    \quad \theta_1\theta_3^2= \theta_1\theta_3z-\theta_1^2\theta_3-\theta_1\theta_2\theta_3,  \quad\theta_2\theta_3^2= \theta_2\theta_3z-\theta_1\theta_2\theta_3-\theta_2^2\theta_3, $ and $  \theta_3^3= \theta_3z^2-\theta_1\theta_3z-\theta_2\theta_3z-\theta_1\theta_3^2-\theta_2\theta_3^2$. By \cref{lm:inclusiontoVeronese}, all one-stage trees in $\mathfrak{T}_{3,3}$ that contain $\T_{13}$ up to a permutation of parameters have toric structure. 

By \cref{lm:inclusion},  the algebras of staged trees $\T_4,\T_8,\T_{10}, \T_{11}$, and $\T_{18}$ are included in the monomial algebras of $\T_5,\T_{16}, \T_{18}, \T_{17}$,  and $\T_{19}$, respectively. We will show that the inclusions are equalities, by comparing their root-to-leaf parametrisations. For $\T_4$,  its root-to-leaf parametrisation changes from the one of $\T_5$  only by the absence of $\theta_1^3$. However, $\theta_1^3$ is equal to the linear combination  $\theta_1^2z-\theta_1^2\theta_2z-\theta_1^2\theta_3z$  of  elements in $\im \, \varphi_{\T_4}$, read by root-to-leaf paths.
Via a similar argument, $\theta_1^3$ is equal to $\theta_1^2z-\theta_1^2\theta_2z-\theta_1^2\theta_3z$  in  $\im \, \varphi_{\T_{18}}$, which makes the latter one equal to $\im \: \varphi_{\T_{19}}$. 
In $\T_8$, the only forms in $\im \, \varphi_{\T_{16}}$ not appearing in its root-to-leaf paths are  $\theta_1\theta_3z$ and $\theta_3^2z$. These can be expressed as  linear combinations of forms read from root-to-leaf paths  $\theta_1\theta_3z=-\theta_1^2\theta_3-\theta_1\theta_2\theta_3-\theta_1\theta_3^2$, and  $\theta_3^2z=\theta_{3}z^2-\theta_1\theta_3z-\theta_2\theta_3z$ in $\im \, \varphi_{\T_{8}}$.  The latter equality serves to prove that   $\im \,\varphi_{\T_{10}}=\im \,\varphi_{\T_{18}}$ and  $\im \,\varphi_{\T_{11}}=\im \,\varphi_{\T_{17}}$.  By \cref{lm:permuting}, the staged trees $\T_4,\T_8,\T_{10}, \T_{11}$, and  $\T_{18}$ have toric structures. 

Lastly, the  image of  $\T_{14}$ is all the Veronese algebra $\im \, \nu_{3,3}$. Similarly to $\T_{13}$,  the four monomials $\theta_1\theta_2^2,\theta_1\theta_3^2,\theta_2^3$, and $  \theta_3^3$ in the canonical basis  of $\im \, \nu_{3,3}$ that do not appear in the root-to-leaf parametrisation of $\T_{14}$ are linear combinations of  the latter ones: 
 $ \theta_1\theta_2^2= \theta_1\theta_2z-\theta_1^2\theta_2-\theta_1\theta_2\theta_3,
    \quad \theta_1\theta_3^2= \theta_1\theta_3z-\theta_1^2\theta_3-\theta_1\theta_2\theta_3,
   \quad\theta_2^3= \theta_2^2z-\theta_1\theta_2^2-\theta_2^2\theta_3, $ and $
   \theta_3^3= \theta_3z^2-\theta_1\theta_3z-\theta_1\theta_2\theta_3-\theta_2^2\theta_3-\theta_2\theta_3^2-\theta_1\theta_3^2-\theta_2\theta_3^2 $.  Due to \cref{lm:inclusiontoVeronese}, all one-stage trees in  $\mathfrak{T}_{3,3}$ containing $\T_{14}$ up to a permutation of parameters have toric structure.  
   
Any  tree in $\mathfrak{T}_{3,3}$  that does not appear in \cref{fig:tableT33} either contains $\T_{13}$ or $\T_{13}$  up to a permutation of parameters, or is equal to one of the trees in the table up to such a permutation.  \cref{lm:inclusiontoVeronese} and \cref{lm:permuting} thus conclude the proof.
\end{proof}

In the proof of \Cref{thm:T33} techniques developed throughout this paper come together. These techniques can prove the toric structure of many other one-stage tree models.  For instance, the first two trees in \cref{fig:34} have toric structure, the second one having $\im\,\nu_{3,4}$ as its defining algebra. There are however still one-stage trees that provide challenges. The smallest such tree is the caterpillar in \cref{fig:t34c}.

\begin{figure}
\centering
\begin{subfigure}[b]{0.3\textwidth}
\centering
\includegraphics{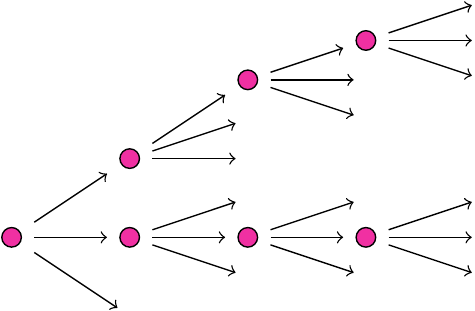}
\caption{This is an SIP tree. \label{fig:t34a}}
\end{subfigure}
\quad
\begin{subfigure}[b]{0.3\textwidth}
\centering
\includegraphics{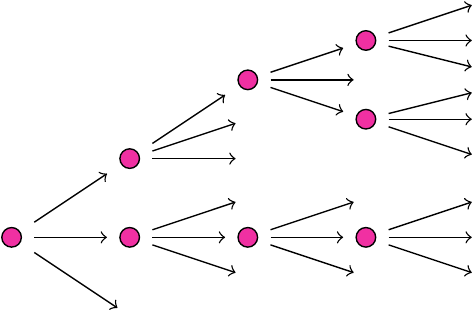}
\caption{This has algebra equal to $\im \, \nu_{3,4}$.\label{fig:t34b}}
\end{subfigure}
\quad
\begin{subfigure}[b]{0.3\textwidth}
\centering
\includegraphics{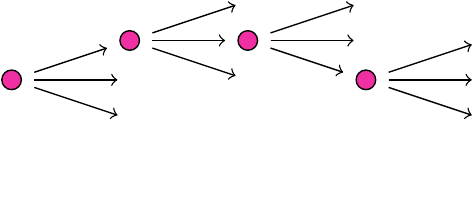}
\caption{Does this staged tree have toric structure?  \label{fig:t34c}}
\end{subfigure}
\caption{Three staged trees in $\mathfrak{T}_{3,4}$. The edge-labels are $\theta_1,\theta_2,\theta_3$,  respectively, read from top to bottom for each vertex.\label{fig:34}}
\end{figure} 
  
\begin{conj}All one-stage tree models have toric structure. \end{conj}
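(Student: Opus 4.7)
The plan is to prove the conjecture by showing that for every one-stage tree $\T$ of depth $d$ on parameters $\theta_1,\ldots,\theta_k$ the algebra $\im\,\varphi_\T$ is isomorphic to a monomial algebra, and then to invoke \cref{prop:monomial_algebra}. My starting observation is that for each vertex $v$ of $\T$, summing the atomic probabilities below $v$ gives
\[
\sum_{\text{leaves $u$ below $v$}} \varphi_\T(p_u) \;=\; \theta^{\boldsymbol{i}_v}\, z^{d-d_v}
\]
in $\R[\theta,z]/\langle \theta-z\rangle$, where $\boldsymbol{i}_v$ and $d_v$ record the root-to-$v$ path. So the image contains every such \emph{vertex monomial}, and the natural candidate for the isomorphic monomial algebra is the subalgebra $M(\T)\subseteq \R[\theta,z]$ that these vertex monomials generate. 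The conjecture would then follow once one proves that the substitution $z\mapsto \theta_1+\cdots+\theta_k$ restricts to an isomorphism $M(\T)\to \im\,\varphi_\T$, possibly after enlarging $M(\T)$ with additional monomials arising as further linear combinations of the $g_\lambda$'s.

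I would then attempt a uniform construction interpolating between the balanced case (\cref{thm:balanced}) and the SIP case (\cref{thm:SIP_toric}): at each internal vertex $v$, pick a distinguished child and substitute the label on the edge to that child by a fresh homogenisation variable, introducing one new variable per vertex only when a single $z$ would force extra linear dependencies among the resulting monomials. Combined with an induction on the depth $d$ using the root decomposition $\T = (r,\T(v_1),\ldots,\T(v_k))$, where each $\T(v_j)$ is a one-stage tree of depth at most $d-1$, this should yield $n$ linearly independent preimages of monomials, and \cref{thm:toricJ} would convert them into the required toric change of variables for $\ker\varphi_\T$.

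The main obstacle, made explicit by the open tree in \cref{fig:t34c}, is that for $k,d>2$ a caterpillar in $\mathfrak{T}_{k,d}$ has only $kd-d+1$ leaves --- strictly fewer than the $\binom{k+d-1}{d}$ canonical generators of $\im\,\nu_{k,d}$ --- so the caterpillar-reduction strategy that drives \cref{thm:binary} in the binary setting is no longer available. Identifying, in an intrinsic combinatorial way, which Veronese-type monomials are accessible from $\im\,\varphi_\T$ through the $g_\lambda$'s and multinomial expansions of $z^m$ appears to be the missing ingredient. A clean description --- perhaps through the affine semigroup generated by the multi-degrees $\{(\boldsymbol{i}_v,\, d-d_v)\}_{v\in V(\T)}$ --- would both confirm the conjecture and supply an explicit toric parametrisation.
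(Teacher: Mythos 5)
This statement is an open conjecture in the paper, not a theorem---the paper explicitly identifies the caterpillar of \cref{fig:t34c} as a one-stage tree whose toric structure it cannot currently decide---so there is no paper proof for your attempt to be compared against, and you are right to acknowledge that your sketch leaves the decisive step open.

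Beyond that, the candidate monomial algebra you propose already fails on a case the paper has decided. The subalgebra $M(\T)\subseteq\R[\Theta,z]$ generated by the vertex monomials $\theta^{\boldsymbol{i}_v}z^{d-d_v}$ is in general \emph{not} isomorphic to $\imf$. The one-stage tree of \cref{fig:monomial_alg_ex}, treated in \cref{ex:algebra_binary}, has six leaf monomials $\theta_1^2z^2,\theta_1\theta_2z^2,\theta_1^2\theta_2z,\theta_1^2\theta_2^2,\theta_1\theta_2^3,\theta_2^2z^2$ which are pairwise distinct, hence linearly independent in $\R[\Theta,z]$; but their images in $\R[\Theta,z]/\langle\theta-z\rangle$ satisfy a linear relation (so $\kerf$ contains a degree-one binomial), and the monomial algebra the paper actually exhibits is $\R[\Theta^4]$ on only five generators. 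Thus the substitution map $M(\T)\to\imf$, $z\mapsto\theta_1+\cdots+\theta_k$, is not injective, and ``enlarging'' $M(\T)$ with further monomials can only worsen this---the correct move, when it exists, is to replace the generating set by a \emph{different} one of the right size, not to add to it. Your other device, substituting the label to a distinguished child of every vertex by a fresh homogenising variable, is the SIP substitution used in the proof of \cref{thm:SIP_toric} (replace each first-edge label by $z$), and \cref{fig:t34c} violates exactly the subtree-inclusion hypothesis that makes this work. Finally, the dimension count you give ($kd-d+1$ leaves versus $\binom{k+d-1}{d}$ Veronese generators when $k,d>2$) correctly diagnoses why the binary argument of \cref{thm:binary} stops, but this restates the obstacle the paper already names rather than supplying a way past it; the affine-semigroup description you end on is indeed the natural next question and remains open.
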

 
We end this section with a discussion of the linear relations appearing in the implicit description of a one-stage tree model. As the following two propositions will show,  unless the structure of the tree is a caterpillar,  the ideal $\kerf$ of one-stage tree models always contains linear relations. 

\begin{prop}\label{prop:linear}
Non-caterpillar one-stage trees contain linear relations in their implicit description. 
\end{prop}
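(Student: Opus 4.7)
The plan is to exhibit an explicit non-zero linear form lying in $\ker \varphi_{\T}$ by finding two distinct vertices of $\T$ whose atomic sums have the same image under $\varphi_{\T}$. The key tool is the identity
\[
\varphi_{\T}(p_{[x]}) \;=\; \theta^{\boldsymbol{i}_x}\, z^{\,d - d_x} \quad \text{in } \R[\Theta,z]\slash \langle \theta - z\rangle
\]
valid for every vertex $x$ of $\T$, where $d_x$ is the depth of $x$ and $\boldsymbol{i}_x$ records the multiset of edge labels along the root-to-$x$ path. This is the homogenised analogue of the observation in \cref{sec:newsec2} that $\varphi_{\T}(p_{[v]})$ equals the product of edge labels along the root-to-$v$ path, and it follows by a short induction over the subtree $\T(x)$ using the sum-to-one identity $\sum_q \theta_q = z$ at every internal vertex. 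The consequence is that whenever two distinct vertices $u, w$ satisfy $d_u = d_w$ and $\boldsymbol{i}_u = \boldsymbol{i}_w$, the difference $p_{[u]} - p_{[w]}$ is a linear element of $\ker \varphi_{\T}$, and it is non-zero whenever the leaf supports of $u, w$ are disjoint and non-empty.

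The first step would be to locate such a pair using the non-caterpillar hypothesis. Since $\T$ is not a caterpillar, some level contains at least two internal vertices; pick the smallest such level $\ell$. Minimality forces level $\ell - 1$ to contain exactly one internal vertex $v$ (if it had none, level $\ell$ would be empty of vertices), and every internal vertex at level $\ell$ must then be a child of $v$. Consequently $v$ has two distinct internal children, call them $v_i$ and $v_j$ with $i \ne j$. Setting $u := (v_i)_j$ and $w := (v_j)_i$ produces two vertices which exist because $v_i$ and $v_j$ are internal, sit at a common depth $d_v + 2$, and carry identical root-to-vertex label multisets $\boldsymbol{i}_v + e_i + e_j$, where $e_q$ denotes the $q$-th standard basis vector.

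Combining the identity with the symmetry in $u$ and $w$ gives $\varphi_{\T}(p_{[u]}) = \varphi_{\T}(p_{[w]})$, whence $p_{[u]} - p_{[w]} \in \ker \varphi_{\T}$. Non-triviality is then immediate: $u$ and $w$ lie in the disjoint induced subtrees $\T(v_i)$ and $\T(v_j)$, each of which contains at least one leaf, so the supports of $p_{[u]}$ and $p_{[w]}$ are non-empty and disjoint. Hence $p_{[u]} - p_{[w]}$ is a genuine linear relation in $\ker \varphi_{\T}$.

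The only delicate point I anticipate is the verification of the atomic-sum identity in the homogenised ring: the base case at a leaf requires inserting the correct factor $z^{d - d_x}$, and the inductive step at an internal vertex depends on collapsing $\sum_q \theta_q$ to $z$ modulo the sum-to-one ideal. Once that identity is in hand, the remainder of the argument is purely combinatorial, amounting to locating the first branching level $v$ and exploiting the symmetry of swapping $\theta_i \leftrightarrow \theta_j$ at sibling positions.
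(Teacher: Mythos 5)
Your proof is correct and takes essentially the same approach as the paper: both locate a vertex with two internal children $v_i, v_j$, pass to the grandchildren $(v_i)_j$ and $(v_j)_i$, observe they have the same image under $\varphi_\T$, and conclude $p_{[(v_i)_j]} - p_{[(v_j)_i]}$ is a linear element of the kernel. You supply two small details the paper leaves implicit — the argument that a non-caterpillar tree must actually contain such a branching vertex (via the minimal level with two internal vertices) and the verification that the resulting linear form is non-zero (disjoint leaf supports) — but the core idea is identical.
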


\begin{proof}

Take a vertex $u$ in the non-caterpillar tree $\T$ that has at least two internal vertices, say $u_i$ and $u_j$, as its children. Let $\theta_i,\theta_j$ be the parameters for the transitions from $u$ to $u_i$ and $u_j$, respectively. Take the child  $u_{ij}$ of $u_i$ whose associated edge label is $\theta_j$. Similarly, let $u_{ji}$ be the child of $u_j$ with edge label $\theta_i$.  Then, $\varphi_{\T}(p_{[u_{ij}]})=\varphi_{\T}(p_{[u]})\theta_{i}\theta_j=\varphi_{\T}(p_{[u_{ji}]})$, and $p_{[u_{ij}]}-p_{[u_{ji}]}=0$ is in the kernel of $\varphi_{\T}$. 
\end{proof}

The following result is valid for all staged trees that share a caterpillar structure and not only for the one-stage tree models. We thus prove it for the general case.

\begin{prop}\label{prop:caterpillar}
Caterpillar trees do not contain linear relations in their implicit form.
\end{prop}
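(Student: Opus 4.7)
The plan is to prove $\varphi_\T$ is injective on linear forms. I work in the homogenised target $\R[\Theta,z]/\langle\theta-z\rangle$ and realise it as a polynomial ring by using each sum-to-one relation to eliminate a single variable. Label the spine vertices of the caterpillar $v_0,v_1,\dots,v_{d-1}$, write $\alpha_i$ for the label of the spine edge $v_{i-1}\to v_i$, let $\beta_{i-1}^{(j)}$ denote the non-spine labels leaving $v_{i-1}$, and let $\gamma^{(j)}$ denote the labels at $v_{d-1}$. Setting $\alpha_i=z-S_i$ with $S_i=\sum_j\beta_{i-1}^{(j)}$, and $\gamma^{(1)}=z-\sum_{j\ge 2}\gamma^{(j)}$, a hypothetical dependence $\sum_r c_r\varphi_\T(p_r)=0$ rewrites as the polynomial identity $P=0$ where
\[
P=\sum_{i=1}^{d-1}(z-S_1)\cdots(z-S_{i-1})\,z^{d-i}\,B_i\;+\;(z-S_1)\cdots(z-S_{d-1})\,C,
\]
with $B_i=\sum_j c_i^{(j)}\beta_{i-1}^{(j)}$ packaging the coefficients at $v_{i-1}$ and $C=c_d^{(1)}z+\sum_{j\ge 2}(c_d^{(j)}-c_d^{(1)})\gamma^{(j)}$ those at $v_{d-1}$.

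I then peel off the coefficients by successive substitutions. First, $z=0$ annihilates every $B_i$-term, since each carries a factor $z^{d-i}$ with $d-i\ge 1$, so the identity collapses to $(-1)^{d-1}S_1\cdots S_{d-1}\sum_{j\ge 2}(c_d^{(j)}-c_d^{(1)})\gamma^{(j)}=0$; since the $\gamma^{(j)}$ and the $\beta$'s are disjoint variable sets, every $c_d^{(j)}$ agrees with a common value $c_d$, so $C=c_d z$. The identity is now divisible by $z$, and substituting $z=S_1$ in $P/z$ annihilates every remaining term except the $i=1$ one by the factor $(z-S_1)$, isolating $S_1^{d-2}B_1=0$ and hence $c_1^{(j)}=0$ for all $j$. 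Iterating for $l=2,\dots,d-1$, after cancelling the factor $(z-S_1)\cdots(z-S_{l-1})$ from what remains and substituting $z=S_l$, only $S_l^{d-l-1}B_l$ survives, forcing $B_l=0$. At the final stage only $c_d(z-S_{d-1})=0$ is left, forcing $c_d=0$.

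The main subtlety is the staged case, where $v_{i-1}\sim v_{i'-1}$ identifies labels $\beta_{i-1}^{(j)}=\beta_{i'-1}^{(j)}$ and causes some $S_i$'s to coincide as polynomials. The peeling still works because the factor $(z-S_l)$ multiplying each $i>l$ term vanishes at $z=S_l$ irrespective of later coincidences, so at each step only $S_l^{d-l-1}B_l$ survives. The degenerate case in which $v_{l-1}$ has only the spine as a child gives $S_l=0$ and no coefficients $c_l^{(j)}$, so step $l$ is simply skipped. This substitution pattern is driven by the linear spine structure of the caterpillar, which is precisely the feature that fails for non-caterpillar trees and gives rise to the linear relations of \cref{prop:linear}.
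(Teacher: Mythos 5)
Your proof is correct and takes a genuinely different and more explicit route than the paper's. The paper works in the non-homogenised quotient ring $\R[\Theta]/\sumt$: it isolates the terms of the shortest root-to-leaf length $s$ contributing to a putative linear relation and derives a contradiction from the fact that, after dividing by the common prefix, the short terms give linear forms in the labels $\theta_{s,j_i}$ while the remaining terms lie in the ideal generated by the spine label $\theta(s)$. You instead pass to the homogenised picture, realise $\R[\Theta,z]/\langle\theta-z\rangle$ as an honest polynomial ring by substituting $\alpha_i = z - S_i$ and $\gamma^{(1)}=z-\sum_{j\ge2}\gamma^{(j)}$, and kill the coefficients one level at a time through the substitution chain $z=0,\,S_1,\,S_2,\dots,S_{d-1}$, using divisibility by $(z-S_1)\cdots(z-S_{l-1})$ to isolate the level-$l$ contribution $S_l^{d-l-1}B_l$. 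This buys a fully concrete algebraic argument in a domain, and the observation that the factor $(z-S_l)$ annihilates all deeper terms regardless of coincidences $S_l=S_{l'}$ is a clean way to handle the staged case.

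One local justification needs to be corrected, though it does not affect the validity of the proof. You argue that $(-1)^{d-1}S_1\cdots S_{d-1}\sum_{j\ge2}(c_d^{(j)}-c_d^{(1)})\gamma^{(j)}=0$ forces $c_d^{(j)}=c_d^{(1)}$ for all $j$ ``since the $\gamma^{(j)}$ and the $\beta$'s are disjoint variable sets.'' For a one-stage caterpillar --- the motivating case in Section~6 --- the terminal vertex $v_{d-1}$ is in the same stage as every spine vertex, so the $\gamma^{(j)}$ for $j\ge 2$ \emph{are} $\beta$-variables and the sets are not disjoint. The correct justification is the same one you already use in every later step: the ambient polynomial ring is a domain, each $S_i$ is a nonzero polynomial (a nonempty sum of distinct variables, given that every internal vertex has out-degree at least two), hence $S_1\cdots S_{d-1}\neq 0$ and the linear form $\sum_{j\ge2}(c_d^{(j)}-c_d^{(1)})\gamma^{(j)}$ must vanish, which forces its coefficients to be zero since the $\gamma^{(j)}$, $j\ge2$, are distinct variables. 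For the same reason your remark about skipping a degenerate step with $S_l=0$ is not quite compatible with the $z=0$ step, but that case corresponds to a spine vertex of out-degree one and does not arise for the caterpillar trees considered here, so you may simply exclude it up front.
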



\begin{proof}
Take  $\cC$ to be a caterpillar tree. 
Let $\theta_{t,1},\theta_{t,2},\dots,\theta_{t,k_t}$ be the children of internal vertex at the level $t-1$ of the caterpillar tree. To avoid heavy notation, we work with the non-homogenised version.
Each root-to-leaf path has a parametrisation $\theta(1)\cdots \theta(t-1)\cdot \theta_{t,j}$, where $1\leq t\leq d-1$, $1\leq j\leq k_t$, and $\theta_{t,j}\neq \theta(t)$.  
Assume by contradiction that the linear relationship $\sum_{i\in U}a_i\cdot p_i=0$, where $U$ is a subset of leaves in $\cC$ and $a_i$ are real values, is in $\ker \,\varphi_{\cC}$.  Its image under $\varphi_\cC$ has the form \(\sum_{i\in U}a_i\cdot \theta(1)\cdots \theta(t_i-1)\cdot \theta_{t_i,j_i}=0,\) where again $\theta_{t_i,j_i}$ is always different from $\theta(t_i)$. 
 Let $s$ be the  maximum number of edges appearing in all root-to-leaf paths ending at a vertex in  $U$. Take  $U_0$ to be the non-empty subset of paths of length $s$ in $I$. After dividing by $\theta(1)\cdots \theta(s-1)$ and separating all  the new linear terms afterwards to the left of the equation, we find
\begin{equation}\label{eq:fakelinear} 
\sum_{i\in U_0}a_i\cdot \theta_{s,j_i}=-\sum_{i\in U\setminus U_0}a_i\cdot \theta(s)\cdots \theta(t_i-1)\cdot \theta_{t_i,j_i}.
\end{equation} 
In order for the equality \cref{eq:fakelinear}  to hold, the right hand-side must be simplified to a sum of linear forms.  
However, manipulating with sum-to-one conditions can at most  give the linear term $\theta(s)$, which  is different from any of the terms $\theta_{s,j_i}$ to the left of \cref{eq:fakelinear}. 
\end{proof}

\section{Further directions}
\label{sec:discussion}


Bayesian networks are arguably the most famous class of staged tree models. In addition to the staged tree representation, they can be visualised by directed acyclic graphs (DAGs): we refer to~\cite{Collazo.etal.2018} for details on this construction. Those DAGs which are decomposable are also toric~\cite{Geiger.etal.2006}. At the time of writing it is an open question whether all Bayesian networks have toric structure. Here, we use Theorem \ref{thm:extend_toric} to show that the non-decomposable Bayesian networks presented by DAGs in Figure \ref{fig:dags}, opening the path for possible applications to methods developed in this paper in proving that all of them have toric structure. The staged tree representations of the DAGs in \cref{fig:dags} for binary variables are shown in \cref{fig:bayesian} but the patterns of these trees are the same for random variables with an arbitrary number of states. Both staged trees satisfy the conditions of \Cref{thm:extend_toric} and hence have toric structure. Here, the set of internal vertices at distance $3$ from the root serve as $v_1,\dots,v_m$ for the tree corresponding to \cref{fig:bayesian1}. Vertices at distance $2$ from the root play this role in the tree in \cref{fig:bayesian2}. 

\begin{conj}
All Bayesian networks have toric structure. 
\end{conj}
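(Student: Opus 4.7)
The plan is to reduce the conjecture to an application of \cref{thm:extend_toric}, by extracting from any Bayesian network a staged tree representation that decomposes into a balanced ``chordal core'' with SIP ``local extensions'' hanging off its leaves. Concretely, given a DAG $G$ on variables $X_1,\dots,X_n$ with a topological ordering, and associated staged tree $\T_G$, one should exhibit a rooted subtree $\cS\subseteq \T_G$ satisfying the three hypotheses of \cref{thm:extend_toric}.

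First I would fix a topological ordering of $G$ that maximises the initial chordal prefix, i.e.\ choose $j$ to be the largest index such that the induced sub-DAG on $\{X_1,\dots,X_j\}$ is decomposable. By the result cited in \cite{Geiger.etal.2006}, combined with its staged-tree reformulation via balanced trees \cite{DG}, the truncation of $\T_G$ to the first $j$ levels is balanced; this truncation serves as $\cS$, and the vertices $v_1,\dots,v_m$ from \cref{thm:extend_toric} are precisely the internal vertices of $\T_G$ at distance $j$ from the root. Condition~2 of \cref{thm:extend_toric} is automatic, since stages in $\T_G$ at different levels correspond to configurations of disjoint parent sets and hence never coincide.

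Next I would verify the SIP condition on each induced subtree $\T_G(v_i)$. In a Bayesian network staged tree, the stages at level $j+t$ are indexed by configurations of $\mathrm{Pa}(X_{j+t})$, and the replication of subtrees across siblings is controlled by which later variables actually depend on $X_{j+t}$. The SIP property for such a stage would follow from identifying, for each vertex $v$ in it, a canonical ``reference child''---corresponding to some fixed state of $X_{j+t}$---whose induced subtree is replicated inside every sibling subtree because no descendant variable distinguishes between that state and the others in its local factor. A careful induction on $n-j$ should reduce matters to the base cases illustrated in Figure \ref{fig:bayesian}, which are handled in the excerpt; the two non-decomposable DAGs depicted there already serve as the ``minimal obstructions'' around which the induction is structured.

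The main obstacle will be proving that the non-decomposable part of a Bayesian network really forces SIP on the residual subtrees in full generality. SIP is genuinely restrictive, and for networks with multiple overlapping non-chordal substructures it is not at all clear that the hierarchy of conditional independences lines up so that every stage admits a common ``minimal'' child subtree. A natural fallback would bypass \cref{thm:extend_toric} entirely and work directly with the ideal of minors $J_\T$ via \cref{thm:toricJ}: Bayesian network stage matrices have a tensor-product shape inherited from the Cartesian product of parent configurations, and I would try to choose row and column operations compatible with this tensor structure to produce $n$ linearly independent linear forms whose $\varphi_\T$-images are monomials in $\R[\Theta,z]/\langle\theta-z\rangle$. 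This combinatorial matching between parent-configuration tensor factors and linear forms is where I expect the deepest difficulty to lie, and where a conceptual lemma---perhaps linking SIP directly to absence of V-structures of a certain type---would be most valuable.
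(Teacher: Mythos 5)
You should first be aware that the statement you are asked to justify is stated in the paper as an open conjecture, not a theorem: the authors explicitly write that ``at the time of writing it is an open question whether all Bayesian networks have toric structure,'' and the paper only verifies the two specific four-variable non-decomposable DAGs of \cref{fig:dags} via \cref{thm:extend_toric}. So there is no ``paper proof'' to compare against; your proposal is an attempt at new mathematics.

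Your outline is a sensible first try, and you correctly identify SIP as the weak point, but the obstruction is more severe than you suggest. The staged tree of a Bayesian network in the usual presentation is stratified: every root-to-leaf path has the same length, so for a vertex $v$ at level $t$ the induced subtrees $\T(v_1),\dots,\T(v_k)$ at its children are pairwise isomorphic as unlabelled rooted trees and have no proper subtree with the same depth and shape. Consequently the only subtree of $\T(v_i)$ rooted at $v_i$ that could be identical to $\T(v_{i_c})$ is $\T(v_i)$ itself, and SIP for the stage of $v$ is equivalent to $\T(v_1)=\dots=\T(v_k)$ \emph{as staged trees}. These subtrees differ precisely in the value of the variable $X_{t+1}$ introduced at level $t+1$, and their stagings agree at level $s\geq t+1$ exactly when $X_{t+1}\notin\mathrm{Pa}(X_{s+1})$. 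Hence demanding SIP on all the residual subtrees $\T(v_1),\dots,\T(v_m)$ forces \emph{every} residual variable $X_{j+1},\dots,X_n$ to have no children in the DAG, i.e.\ the residual variables must form an independent set. There is no reason a non-decomposable DAG should admit a topological ordering in which a decomposable prefix leaves only sinks; the two examples of \cref{fig:bayesian} happen to satisfy this because only the last one or two variables remain, but it already fails for modest five-variable networks with arrows among the late variables. So the plan of invoking \cref{thm:extend_toric} directly cannot resolve the conjecture in general. Your fallback of applying \cref{thm:toricJ} directly to the ideal of minors is the more promising route, but as you acknowledge it lacks the key combinatorial lemma: you must exhibit $n$ linearly independent linear forms whose $\varphi_\T$-images are monomials in $\R[\Theta,z]/\langle\theta-z\rangle$, and the tensor structure of stage matrices by itself does not yet supply these. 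The conjecture remains open.
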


There are staged trees which are not covered by \cref{thm:extend_toric} but can still be proven to have a toric structure using Theorem \ref{thm:toricJ}. In fact we saw such a tree already in Example \ref{ex:minors}. Recall that one requirement in Theorem \ref{thm:toricJ} was that the new variables are mapped to monomials under $\varphi_\T$. This in turn gives the parametrisation of the variety of $\ker \varphi_\T$. In many cases, such as most balanced trees, the ideal $J_\T$ is properly included in $\kerf$.  So, in general it is not sufficient to only prove that $J_\T$ is binomial. 
It can of course happen that $J_\T $ is equal to $\ker \varphi_\T$ as in the example below.

\begin{figure}
\centering
\begin{subfigure}[b]{0.45\linewidth}
\centering
\includegraphics[scale=0.8]{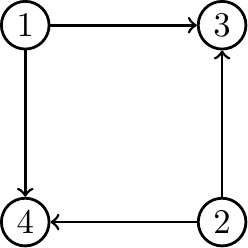}
\caption{Bayesian network number $9$ in Table $1$ of \cite{Garcia.etal.2005}.\label{fig:dag1}}
\end{subfigure}
\qquad
\begin{subfigure}[b]{0.45\linewidth}
\centering
\includegraphics[scale=0.8]{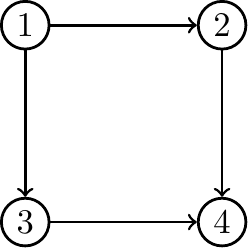}
\caption{Bayesian network number $15$ in Table $1$ of \cite{Garcia.etal.2005}.\label{fig:dag2}}
\end{subfigure}%
\caption{Two non-decomposable DAGs in four random variables, statistically equivalent to the staged trees in \cref{fig:bayesian}.}
\label{fig:dags}
\end{figure}
\begin{figure}
\centering
\begin{subfigure}[b]{0.45\linewidth}
\centering
\includegraphics[scale=0.8]{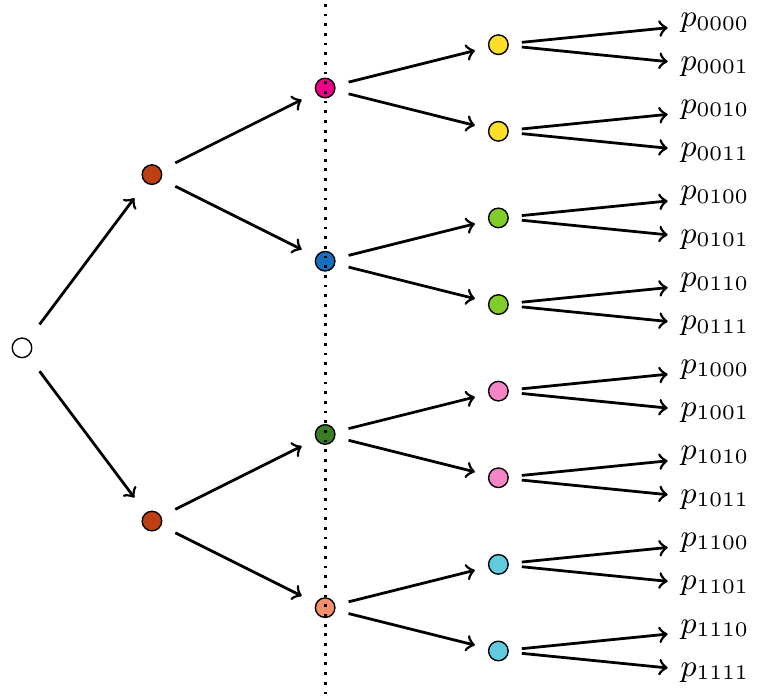}
\caption{A staged tree representing the DAG in \cref{fig:dag1}.\label{fig:bayesian1}}
\end{subfigure}
\qquad
\begin{subfigure}[b]{0.45\linewidth}
\centering
\includegraphics[scale=0.8]{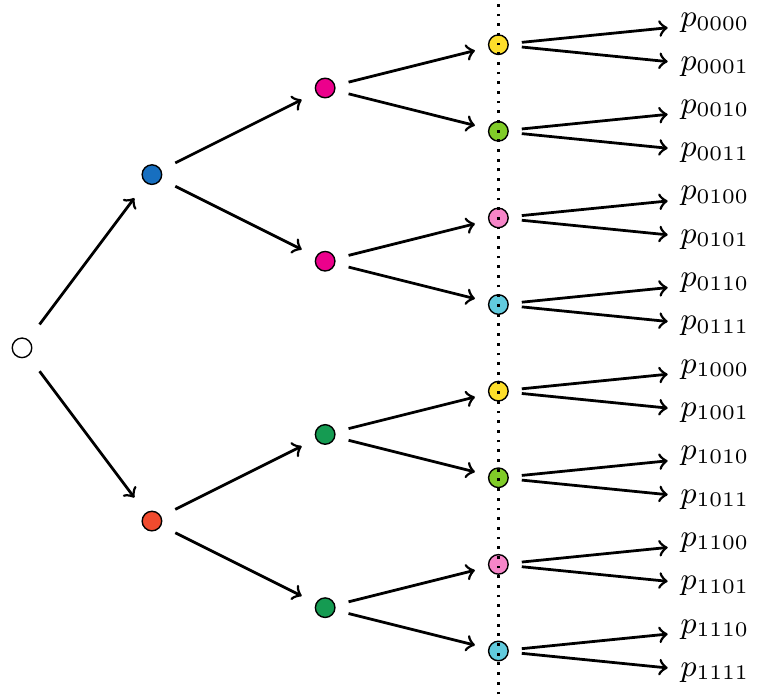}
\caption{A staged tree representing the DAG in \cref{fig:dag2}. \label{fig:bayesian2}}
\end{subfigure}%
\caption{Each  of the subtrees to the left of the dotted lines are balanced, and all the subtrees to the right of the doted line satisfy the subtree-inclusion property. \label{fig:bayesian}}
\end{figure}

On the other side, both the trees in \cref{fig:t34c} and \cref{fig:april} have $J_\T=\ker \varphi_\T$ but none of our endeavours found linear transformations that  give the correct number of distinct entries, and it is as of now unknown whether these models are toric.  

An additional simple idea to find toric structure is a computational approach we present at:
\begin{center}
\url{https://mathrepo.mis.mpg.de/StagedTreesWithToricStructures}.
\end{center}
This uses the computer algebra softwares \texttt{Macaulay2}~\cite{M2} and \texttt{Mathematica}~\cite{Mathematica} to either visually check whether a staged tree's prime ideal has binomial generators or to randomly create linear transformations doing row and column operations on stage matrices as in Example~\ref{ex:nastytoric} until we end up with an isomorphism which reveals underlying toric structure. It is beyond the scope of this paper to develop efficient algorithms for these ideas.

\begin{figure}
\centering
\begin{minipage}[b]{0.45\linewidth}
\centering
\includegraphics{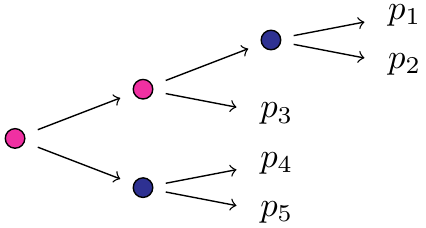}
\caption{A staged tree with toric structure not covered by Theorem \ref{thm:toricJ}. \label{fig:nastytoric}}
\end{minipage}
\qquad
\begin{minipage}[b]{0.45\linewidth}
\centering
\includegraphics{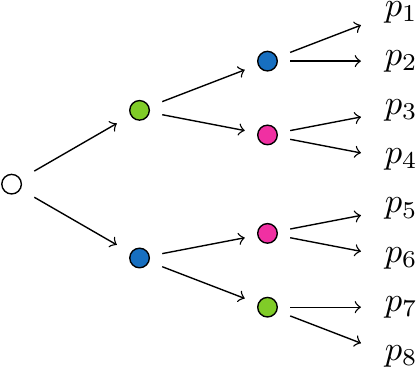}
\caption{Does this staged tree have toric structure? \label{fig:april}}
\end{minipage}%
\end{figure}

As we extend the class of staged tree models with toric structure and do not find counter-examples, we dare to finalise this paper with the wishful  conjecture that all of these have toric structure.

\begin{conj} 
All staged tree models have  toric structure.
\end{conj}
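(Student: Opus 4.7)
The plan is to attack the conjecture by induction on the depth $d$ of the staged tree, leveraging \Cref{thm:toricJ} with a carefully enlarged intermediate ideal $J$. The base case $d=1$ is trivial since then $\ker\varphi_\T=0$. For the inductive step, given a staged tree $\T$ of depth $d$ with root children $v_1,\ldots,v_k$, I would first apply the induction hypothesis to each induced subtree $\T(v_i)$ to obtain linear forms $\ell^{(i)}_1,\ldots,\ell^{(i)}_{n_i}$ in the atomic variables of $\T(v_i)$ whose images under $\varphi_{\T(v_i)}$ are monomials and in which $\ker\varphi_{\T(v_i)}$ is binomial. Each such form pulls back to a linear form $\tilde\ell^{(i)}_j$ in the $p_r$'s of $\T$, obtained by restricting indices $r$ to leaves in $\T(v_i)$, and then scaling by the single new edge-label $\theta(\text{root},v_i)$ on the image side.

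The core construction would be to assemble these local data into a global basis. For any two siblings $v_i$ and $v_j$ whose subtrees contain paired vertices in the same stage, I would apply row and column operations on the stage matrices $M_c$ in the spirit of the proof of \Cref{thm:SIP_toric} and \Cref{thm:extend_toric}, so that the resulting entries are linear combinations of the $\tilde\ell^{(i)}_j$'s and atomic sums $p_{[v]}$. The enlarged ideal
\begin{equation}
J \ = \ I_\T \ + \ \sum_c J_c \ + \ \sum_i \overline{\ker\varphi_{\T(v_i)}}
\end{equation}
sits between $I_\T$ and $\ker\varphi_\T$, by \Cref{lm:modelinvariantprime} and \Cref{lm:inclusion}. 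If one can show that $J$ is binomial in a suitable linearly independent collection of such forms whose $\varphi_\T$-images are monomials, then \Cref{thm:toricJ} concludes the proof.

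The hard part will be controlling the compatibility of the inductive choices across subtrees. A linear form used to monomialise $\ker\varphi_{\T(v_i)}$ may be forced, through a stage linking $\T(v_i)$ to $\T(v_j)$, to take a specific shape relative to a form used for $\T(v_j)$, and there is no a priori reason for a simultaneous compatible choice to exist. The examples in \Cref{fig:t34c} and \Cref{fig:april} already satisfy $J_\T=\ker\varphi_\T$ yet resist producing $n$ distinct entries after row and column operations on stage matrices; this suggests that even enlarging $J$ to the full prime ideal may not be algorithmically convenient, and that a genuinely new combinatorial rule for selecting the linear forms is required.

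As a complementary route I would pursue \Cref{prop:monomial_algebra}: show directly that $\imf$ is isomorphic to a monomial subalgebra of $\R[\Theta]$ by inductively exhibiting a monomial $\R$-basis. Here the pattern from \Cref{sec:one_stage} is encouraging: substituting the first parameter of each stage by the homogenising variable $z$ (as in the proof of \Cref{thm:SIP_toric}) and then re-expanding via the sum-to-one relations typically lands all generators inside a fixed Veronese-type canonical basis. Proving that such a canonical substitution works in full generality, and in particular handles non-binary one-stage obstructions like \Cref{fig:t34c}, is where I expect the decisive technical difficulty to lie; success there would feed back into the inductive construction above and, I hope, close the conjecture.
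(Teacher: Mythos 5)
This statement is an open conjecture, not a theorem; the paper offers no proof of it and explicitly presents it as a wishful closing conjecture, pointing to \Cref{fig:t34c} and \Cref{fig:april} as staged trees for which the authors themselves could not determine whether a toric structure exists. Your write-up is a plan of attack rather than a proof, and you are candid about the decisive gap: in the inductive step there is no reason the linear forms chosen to monomialise $\ker\varphi_{\T(v_i)}$ can be chosen compatibly across sibling subtrees that are linked by a shared stage. That compatibility obstruction is exactly the problem, and nothing in your sketch resolves it. Moreover, the enlarged ideal $J = I_\T + \sum_c J_c + \sum_i \overline{\ker\varphi_{\T(v_i)}}$ only helps if you can actually exhibit $n$ linearly independent forms with monomial images in which $J$ is binomial; as you note, even $J_\T = \ker\varphi_\T$ holds for \Cref{fig:t34c} and \Cref{fig:april} and yet no such system of forms has been found, so the hypotheses of \Cref{thm:toricJ} are not verified.

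The complementary route via \Cref{prop:monomial_algebra} has the same status: the substitution of the first parameter of each stage by $z$ works precisely in the \sip{} setting (\Cref{thm:SIP_toric}) because the subtree-inclusion property guarantees that after the substitution every term lands back inside the image algebra, and this is what fails for trees like the caterpillar in \Cref{fig:t34c}. Proving that some canonical substitution always works is essentially equivalent to the conjecture, so invoking it does not make progress. To be clear, your proposal correctly identifies the right machinery from the paper (\Cref{thm:toricJ}, the row and column operations on stage matrices, \Cref{prop:monomial_algebra}), and a solution would very plausibly be built from these pieces, but as written the argument contains two "hope" steps, both of which are the hard part. A genuine proof or a counterexample is still needed.
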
 

\section*{Acknowledgements}
The authors would like to thank Eliana Duarte for rewarding discussions on the topics of this paper.

\bibliography{references}

\end{document}